\newcommand{\xdeta}{\zeta}
\newcommand{\NA}{N(A)}
\newcommand{\RA}{R(A)}
\DeclareMathOperator*{\wlim}{{}_{\rm w} lim\, }
\newcommand{\N}{\mathbb{N}}
\newcommand{\R}{\mathbb{R}}
\newcommand{\xn}{x_n}
\newcommand{\An}{A_n}
\renewcommand{\And}{{A_n^\dagger}}
\newcommand{\xmn}{x_{n,m}}
\newcommand{\xmnk}{x_{n_k,m_k}}
\newcommand{\Amn}{A_{n,m}}
\newcommand{\Amnd}{{A_{n,m}^\dagger}}
\newcommand{\Amns}{{A_{n,m}^*}}
\newcommand{\Amnsd}{{(A_{n,m}^*)}^\dagger}
\newcommand{\Amndsind}[1]{{(A_{#1}^*)}^\dagger}
\newcommand{\Upmn}{A^*\Amn}
\newcommand{\Upmns}{\Amns A}
\newcommand{\mim}{{\infty,m}}
\newcommand{\xm}{x_{\mim}}
\newcommand{\Am}{A_{\mim}}
\newcommand{\Amd}{{A_{\mim}^\dagger}}
\newcommand{\Ands}{{(A_n^*)}^\dagger}
\newcommand{\Andsind}[1]{{(A_{#1}^*)}^\dagger}
\newcommand{\Ans}{A_n^*}
\renewcommand{\And}{{A_n^\dagger}}
\newtheorem{theorem}{Theorem}
\newtheorem{lemma}[theorem]{Lemma}
\newtheorem{remark}{Remark}
\newtheorem{proposition}[theorem]{Proposition}
\newtheorem{corollary}[theorem]{Corollary}
\newtheorem{example}{Example}
\newcommand{\xd}{{x^\dagger}}
\title{Projection methods for ill-posed problems revisited}
\author{Stefan Kindermann\footnote{Industrial Mathematics Institute, Johannes Kepler University Linz, 
Alternbergergstra{\ss}e~69, 4040 Linz (kindermann@indmath.uni-linz.ac.at).} }
\date{}
\begin{document}
\maketitle
\begin{abstract}
The discretization of   least-squares problems for linear ill-posed operator equations 
in Hilbert spaces is considered.
The main subject of this article concerns conditions for convergence of the associated 
discretized minimum-norm least-squares solution to the exact solution 
using exact attainable data. The two cases of global convergence
(convergence for all exact solution) or local convergence (convergence for a specific exact solution)
are investigated.
We review the existing results  and prove new equivalent condition
when the discretized solution always  converges to the exact solution. 
An important tool is to recognize the discrete solution operator as 
oblique projection. Hence, global convergence can be characterized
by  certain subspaces having uniformly bounded angles. We furthermore derive practically 
useful conditions when this holds and put them into the context of known results. 
For local convergence we generalize results on the characterization of weak or strong
convergence and state some new sufficient conditions.
We furthermore provide an example of a bounded sequence of discretized solutions 
which does not converge at all, not even weakly.
\end{abstract}

\section{Introduction}
We study the role of discretization in the use of solving 
ill-posed linear operator equations in Hilbert spaces. 
Consider an ill-posed problem in Hilbert spaces
\begin{equation}\label{main}  A x = y, \end{equation}
where $A: X \to Y$ is continuous and  equation \eqref{main} for solving $x$ 
from given data~$y$ is ill-posed. 
In the following, $\NA$ and $\RA$ denote the nullspace and 
the range of an operator~$A$, respectively. By $A^\dagger$ we denote the pseudoinverse 
of $A;$ cf., e.g., \cite{EHN96}. We symbolize norm-convergence by 
$\to$ and weak convergence by $\rightharpoonup$.
We denote the weak limit by the symbol $\wlim,$ and, for a closed subspace $Z$, 
$\Pi_Z$ denotes the associated orthogonal projector onto $Z$. 
In the following, we  assume (unless specified otherwise) the attainable case
 for problem \eqref{main}, i.e., that $y$ is in $\RA$. 
In this case, we can set $y$ being the  image of an element in $N(A)^\bot.$
\[ A \xd = y, \qquad \xd \in \NA^\bot. \]
It is the unique element $\xd,$  which we want to reconstruct from given data $y$. 

We are interested in projection methods acting as a regularization, i.e., 
in approximating the pseudoinverse of $A$ by solving discrete least-squares 
problems related to  \eqref{main}. For this task we introduce  discretizations in 
the spaces $X$ and $Y$. Precisely, we assume given an increasing sequence of 
 finite-dimensional spaces $X_n \subset X$ and $Y_m \subset Y,$ $n,m \in \N$, with the property
\begin{equation}\label{projdef} 
X_{n} \subset X_{n+1}, \qquad \overline{\bigcup_n X_{n}} = X,
\qquad 
Y_{m} \subset Y_{m+1}, \qquad \overline{\bigcup_m Y_{m}} = Y.
\end{equation}
For the discretization spaces we always denote the associated orthogonal projector onto $X_n$ by $P_n := \Pi_{X_n}$ and 
onto $Y_m$ by~$Q_m := \Pi_{Y_m}$ 
\[ P_n :X \to X_n, \qquad Q_m: Y \to Y_m.  \] 

The discretization of \eqref{main} by a general projection method involves the operator 
\begin{equation}\label{genporo} \Amn:= Q_m A P_n, \end{equation}
and we define the associated solutions by (assuming attainability)
\begin{equation}\label{defxmn} \xmn := \Amnd y = \Amnd A \xd. \
\end{equation}
It is well-known that $\xmn$ is the unique solution of minimum norm under all 
least-squares solutions of the projected problem, i.e., 
\begin{equation*}
\begin{split}
\xmn  &= \mbox{argmin}_{x \in X_n} \|Q_m A x - y\|^2 = 
\mbox{argmin}_{x \in X_n} \|Q_m A x - Q y\|^2,  \quad 
 \mbox{ and } \\
 \xmn &\in N(\Amn)^\bot.
 \end{split}
\end{equation*}
It follows that  $\Amnd = P_n \Amnd Q_m$. The  general projection method \eqref{defxmn}
embraces two special well-known methods: if we put formally $m=\infty$, and hence $Q_m = I$, 
we obtain the projected least-squares method involving 
\begin{equation}\label{mnls} \xn := \And A \xd, \qquad \An = A P_n. \end{equation}
Conversely if we set $n = \infty$ and formally put $n = \infty,$ we obtain the 
dual least-squares method, 
\[ \xm := \Amd A \xd, \qquad \Am = Q_m A . \] 
We distinguish these important special cases by labeling them with only 
one index for the first method and by the index $\mim$ for the second one. 
However, the dual least-squares method is not so much of interest for this paper
(although it is of practical importance) as it always leads to a convergent method.

It is clear that $\Amnd$ is a bounded operator and hence $\xmn$ can be computed 
in a stable way. Moreover, the usual rules for adjoints and inverses hold:
$(\Amnd)^* = \Amnsd.$

The immediate question that arises from this setup is, if $\xmn$ in \eqref{defxmn} 
converges to $\xd$ as 
$n,m \to \infty$, in what sense does this convergence happen, and for which $\xd$ does this hold. 

More precisely, we study two different subjects: 
\begin{itemize} 
\item {\bf Local convergence.}
Fix $\xd$. Find conditions such that 
\begin{align*} 
 &\xmn \rightharpoonup \xd  \qquad \text{ as } n,m \to \infty, 
 \\[-4mm]
 \intertext{or}  \nonumber \\[-6mm]
 &\xmn \to  \xd   \qquad \text{ as } n,m \to \infty\,.  
\end{align*}

\item {\bf Global convergence.} Find conditions such that 
 \begin{align*} 
   &\xmn \to  \xd   \qquad \text{ as } n,m \to \infty  \qquad \forall \xd  \in N(A)^\bot . 
 \end{align*}
 
\end{itemize}

The second issue concerns convergence not only for one fixed $\xd$ 
but for all $\xd \in N(A)^\bot$. 
 
Both questions are relevant for the general projection method \eqref{defxmn} and the 
projected least-squares method \eqref{mnls}.
Note that the distinction between weak and strong convergence is not relevant for global convergence because the corresponding 
conditions are identical \cite{GrNe88}.

Of course, these question have been discussed and partly answered in literature, but  often only for the
projected least-squares method or even with further restriction like injective operators; see Section~\ref{sec:rev}
for a review. It is observed that many authors in different articles use different conditions to prove convergence 
of a specific scheme, for instance,  \eqref{ubc0} or \eqref{natterer} below. The relation between different  conditions 
in different papers  is not always obvious. It is one of the purposes of this paper to clarify this 
situation and  to unify the convergence conditions at best, 
to generalize known results to the general projection case using the operator \eqref{genporo} and hereby avoiding
unnecessary assumptions like injectivity. 

Let us mention that the convergence of  $\xmn$ to $\xd$ is the most important requirement for 
the projection methods discussed here to act as regularization. The second one, the stability of the regularization,
is automatically satisfied since we are dealing with finite-dimensional problems. Indeed, if  convergence 
of $\xmn$ to $\xd$ is verified, it is not difficult to find error estimates for noisy data as well and 
with appropriate parameter choice rules (where the index of the approximation spaces $n,m$ act as ``regularization parameter''),
convergence of $\xmn$ to $\xd$ can be proven even for the case of noisy data. We do not dwell further on this matter
since it can be treated by  standard methods; for results on the noisy case  or 
also nonlinear problems, see, e.g., \cite{N1,N2,N3,N4,N5,N6,Ha1,Ha2,Ha3}; for combination with regularization, see e.g., 
\cite{T1,T2,T3,TiAr}. For results with focus on the analysis of specific advanced method of choosing the discretization 
spaces (like adaptivity or multilevel-type), we refer to \cite{Ad1,Ad2,Ad3}.

This paper is organized as follows: in Section~\ref{rec:two}, we review existing convergence results and prove 
some important lemmas. In Section~\ref{sec:three} we provide new conditions for global or local convergence 
and relate them to  results in literature. In Section~\ref{sec:four} we state a nontrivial example of a non-convergent 
sequence  $\xmn$ which is bounded. We summarize with a conclusion in Section~\ref{sec:five}.

\section{Known and preliminary results}\label{rec:two} 
In this section we  give an 
extensive literature review of know results related to  the questions raised in the previous
section. Moreover, we present some lemmas needed later for the convergence analysis.

\subsection{A review of known results}\label{sec:rev}
The question of local or global convergence has, of course, been 
addressed in several articles. However, as stated above, 
quite often only injective operators, i.e., $N(A) = \emptyset$,
or the case of projected least-squares problems, i.e., $Q_m = I,$ have been addressed. 
Moreover, although those results are useful, they are not always completely sharp.

Before we come to the positive results, we remind of a well-known negative 
result of non-convergence. 
The following statement is the famous counterexample of Seidman \cite{se80} for the projected least-squares problem.
\begin{example}[Seidman]
There exists a compact injective linear operator $A$ and a $\xd$ 
such that  $\xn$ as given by \eqref{mnls} is a unbounded sequence. 
Thus,  in particular, we have non-convergence  $\xn \not \to \xd$. 
Moreover there also exists $A,\xd$ as before such that 
$\xn$ is bounded but $\xn \not\to \xd.$
\end{example}
The operator used for this example is a diagonal operator in the $l^2$-sequence space
with a rank-1 perturbation:
\[ A: l^2 \to l^2 \qquad A = diag(\gamma) + \beta \otimes e_1, \]
where $\gamma$ and $\beta$ are appropriate sequences and $e_1$ is the sequence 
with all $0$ except at the first position, where it is $1$. 
By an appropriate (constructive) choice of $\xd$ and $\beta,\gamma$, 
the unboundedness of $\xn$ can be shown; see  \cite{se80} or \cite{EHN96}. 
The last statement in this theorem of a bounded (strongly-) non-convergence sequence is
stated in \cite{se80} but not explicitly proven.

Concerning the question of finding conditions for 
global convergence, the problem is well-studied. 
The following result is proven by Nashed \cite{Na76}, (for $\An$),  see also \cite{Na77},
in \cite[Theorem 3.7]{Ki11} for $\An$ being injective, and 
for the general case with $\Amn$ by Du \cite{Du08} (see also \cite{DuDu14}). 
It gives a necessary and sufficient condition for global convergence. 
\begin{theorem}\label{kith}
\[ \xmn \to \xd \quad \text{ as } m,n\to \infty  \qquad \forall \xd \in N(A)^\bot, \]
if and only if there exists a constant $C$ such that 
\begin{equation}\label{ubc0} \sup_{n,m} \|\Amnd A\| \leq C. 
\end{equation}
\end{theorem}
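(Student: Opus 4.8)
The statement is an equivalence between pointwise convergence of the operator family $\Amnd A$ on $\NA^\bot$ and its uniform boundedness, so my plan is to read it as one instance of the Banach--Steinhaus principle together with its partial converse. For necessity I would first record that each $\Amnd A$ is a bounded operator from $X$ into $X$ and that it annihilates $\NA$, since $Ax=0$ forces $\Amnd A x=0$. Hence, if $\xmn=\Amnd A\xd\to\xd$ for every $\xd\in\NA^\bot$, then for arbitrary $x=x_0+x_1$ with $x_0\in\NA$ and $x_1\in\NA^\bot$ one gets $\Amnd A x=\Amnd A x_1\to x_1=\Pi_{\NA^\bot}x$, so the family converges pointwise on all of $X$ to $\Pi_{\NA^\bot}$. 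A pointwise convergent family is pointwise bounded, and as the family is indexed by $\N\times\N$ and $X$ is complete, the uniform boundedness principle immediately yields $\sup_{n,m}\|\Amnd A\|\le C$.

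For the converse I assume the bound $C$ and try to upgrade convergence on a dense subset to convergence on all of $\NA^\bot$. The set $S:=\{\xd\in\NA^\bot:\xmn\to\xd\}$ is closed: for $\xd^{(k)}\in S$ with $\xd^{(k)}\to\xd$, the splitting $\Amnd A\xd-\xd=\Amnd A(\xd-\xd^{(k)})+(\Amnd A\xd^{(k)}-\xd^{(k)})+(\xd^{(k)}-\xd)$ together with the uniform bound gives $\limsup_{n,m}\|\xmn-\xd\|\le(C+1)\|\xd-\xd^{(k)}\|\to 0$. It therefore suffices to produce a dense subset of $\NA^\bot$ on which convergence holds. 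Since $\overline{\bigcup_nX_n}=X$ and $\Pi_{\NA^\bot}$ is continuous, the image $\Pi_{\NA^\bot}(\bigcup_nX_n)$ is dense in $\NA^\bot$, so I would verify convergence for $\xd=\Pi_{\NA^\bot}v$ with a fixed $v\in X_{n_0}$.

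For such $\xd$ I would use $A\xd=Av$ and the identity $\Amnd=\Amnd Q_m$ (which follows from $\Amnd=P_n\Amnd Q_m$) to write $\xmn=\Amnd Q_m A v$; for $n\ge n_0$ one has $v\in X_n$, so $Q_mAv=Q_mAP_nv=\Amn v$ and therefore $\xmn=\Amnd\Amn v=\Pi_{N(\Amn)^\bot}v$, the orthogonal projection of the fixed vector $v$ onto $N(\Amn)^\bot$. The whole problem thus collapses to the convergence $\Pi_{N(\Amn)^\bot}v\to\Pi_{\NA^\bot}v$. I expect this to be the main obstacle, since it is precisely a statement about the kernels $N(\Amn)$ approximating $N(A)$, and it is the failure of exactly this behaviour that drives Seidman's counterexample. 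A natural first move is a weak-limit argument: the net $\xmn$ is bounded by $\|v\|$, and for any weak cluster point $\bar x$ one has, testing $Q_mA\xmn=Q_mAv$ against $\phi\in\bigcup_mY_m$, the identity $A\bar x=Av$, whence $\Pi_{\NA^\bot}\bar x=\xd$.

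The genuinely delicate remaining step is to force the $\NA$-component of $\bar x$ to vanish and to promote weak convergence to norm convergence; this is where I anticipate needing the minimum-norm property $\|\xmn\|\le\|v\|$ in combination with the nestedness of the spaces $X_n$ and $Y_m$, and it is plausibly the place where one invokes an auxiliary projection-convergence lemma of the preliminary section rather than the Banach--Steinhaus bookkeeping, which is routine. I would also note that no weak/strong dichotomy appears in the global statement: once the dense-subset convergence is secured it is the uniform bound that propagates it, in strong norm, to all of $\NA^\bot$.
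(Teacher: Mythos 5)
Your necessity direction is correct and is exactly the paper's argument: $\Amnd A$ annihilates $\NA$, pointwise convergence on $\NA^\bot$ gives pointwise boundedness on $X$, and Banach--Steinhaus yields \eqref{ubc0}. The sufficiency direction, however, has a genuine gap, and you name it yourself: after the (correct) reduction via closedness of the convergence set and density of $\Pi_{\NA^\bot}\bigl(\bigcup_n X_n\bigr)$, everything collapses to the claim $\Pi_{N(\Amn)^\bot}v\to\Pi_{\NA^\bot}v$ for $v\in X_{n_0}$, and you stop there. That claim is not routine; it is where the entire content of the theorem lives. Writing $v=\xd+\Pi_{\NA}v$, it amounts to $\Pi_{N(\Amn)^\bot}\xd\to\xd$ \emph{together with} $\Pi_{N(\Amn)^\bot}\Pi_{\NA}v\to 0$; the second part is essentially the space condition \eqref{spacecond}, which fails in general (this is what drives Du's Example~\ref{alb} and Theorem~\ref{neucount}) and holds under \eqref{ubc0} only by virtue of a separate, nontrivial proposition proved later in the paper. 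Your weak-limit argument only identifies the $\NA^\bot$-component of cluster points and, as you concede, neither kills the $\NA$-component nor upgrades to norm convergence. The minimum-norm bound $\|\xmn\|\le\|v\|$ is not enough for a Radon--Riesz argument either, since $\|v\|$ may exceed $\|\xd\|$.

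The paper avoids this reduction entirely. Its sufficiency proof (Lemma~\ref{next} plus Theorem~\ref{th15}) works directly with an arbitrary $\xd\in\NA^\bot$ via the identity
\begin{equation*}
\xmn-\xd=-(I-\Pi_{R(\Amns)})\xd+\Amnd A(I-P_n)\xd,
\end{equation*}
using $\Amnd\Amn=\Pi_{N(\Amn)^\bot}=\Pi_{R(\Amns)}$. The second term is bounded by $C\|(I-P_n)\xd\|\to0$ using \eqref{ubc0}, and the first term equals $\inf_{z\in R(\Amns)}\|\xd-z\|$, which tends to zero by the constructive approximation of Lemma~\ref{ellem}: choose $w_\epsilon$ with $\|\xd-A^*w_\epsilon\|\le\epsilon$ and observe $P_nA^*Q_mw_\epsilon\in R(\Amns)$ with $\|\xd-P_nA^*Q_mw_\epsilon\|\le2\epsilon$ for $n,m$ large. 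This is precisely the ``auxiliary projection-convergence lemma'' you anticipated needing; note that it exploits $\xd\in\overline{R(A^*)}$ in an essential way, which is why no analogue is available for the $\NA$-component of your $v$. To complete your route you would have to either prove that \eqref{ubc0} implies \eqref{spacecond}, or abandon the dense-subset reduction in favour of the decomposition above.
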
\vspace{-2mm}
Below, we will also reprove the corresponding result (Theorem~\ref{th15}) 
and, in particular, study  characterizations of the uniform boundedness condition \eqref{ubc0};
see Theorems~\ref{lemmafive} and \ref{lemmasix}. 

Note that in \cite[Theorem 2.6]{Du08}, Theorem~\ref{kith} has been generalized to the case of 
nonatainable data, i.e., when $y = A \xd + R(A)^\dagger$. In this case the 
necessary and sufficient conditions for $\xn$ being strongly (weakly) convergent to $A^\dagger y$ is 
\eqref{ubc0} and $\Ans Q_n y \to 0 \Rightarrow \And Q_n y \to   (\rightharpoonup) 0.$ 

If follows immediately from Theorem~\ref{kith}  for the dual projection case, i.e., $\Amn = \Am,$ 
by $\Amd = \Amd Q_m$, that condition \eqref{ubc0} is always satisfied,
i.e., this method always globally converges. This is well-known and has been shown, e.g., 
in~\cite{EHN96}.

A widely used sufficient condition for uniform boundedness
and hence global convergence of $\xn$ 
has
been presented by Natterer \cite{Na77} using a result by Nit\-sche~\cite{Ni70}.
\begin{theorem}[Natterer]
Let $A$ be injective. 
Suppose that  there exists a constant~$C$ such that 
for all $\xd \in X$  there exists a $u_n \in X_n$: 
\begin{equation}\label{natterer}
\|\xd-u_n\| + \|A_n^\dagger\| \|A (\xd-u_N)\| \leq C \|\xd\|.
\end{equation} 
Then $\xn \to \xd$ as $n\to \infty$ for all $\xd \in X.$
\end{theorem}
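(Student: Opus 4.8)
The plan is to reduce the statement to the uniform boundedness characterization of global convergence provided by Theorem~\ref{kith}. Since $A$ is injective we have $N(A)^\bot = X$, so the asserted conclusion ``$\xn \to \xd$ for all $\xd \in X$'' is exactly global convergence in the sense of Theorem~\ref{kith} for the projected least-squares method \eqref{mnls}, i.e., for $m = \infty$ and $\An = A P_n$. By that theorem it therefore suffices to verify the uniform boundedness condition \eqref{ubc0}, which in this case reads $\sup_n \|\And A\| \leq C'$ for some constant $C'$. In other words, everything comes down to extracting a uniform bound on $\|\xn\| = \|\And A \xd\|$ from the approximation hypothesis \eqref{natterer}.

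The key step is an algebraic identity relating $\xn$ to the comparison element $u_n \in X_n$ supplied by \eqref{natterer}. Here injectivity enters decisively: because $N(A) = \{0\}$ one has $N(\An) = N(A P_n) = X_n^\bot$, and hence $N(\An)^\bot = X_n$. Consequently $\And \An$ is the orthogonal projector onto $X_n$, so $\And \An u_n = u_n$ for every $u_n \in X_n$; moreover $\An u_n = A P_n u_n = A u_n$ for such $u_n$. Using $y = A\xd$ and $\xn = \And A \xd$ I would then compute
\begin{equation*}
\xn - u_n = \And A \xd - \And \An u_n = \And\,( A\xd - A u_n ) = \And A (\xd - u_n).
\end{equation*}
This is the heart of the argument, and I expect the verification of $N(\An)^\bot = X_n$ and of the projector identity $\And \An = P_n$ to be the only genuinely substantive point; the remainder is bookkeeping.

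From this identity the desired bound follows by two applications of the triangle inequality. Taking norms gives $\|\xn - u_n\| \leq \|\And\|\,\|A(\xd - u_n)\|$, whence
\begin{equation*}
\|\xn\| \leq \|u_n\| + \|\xn - u_n\| \leq \|\xd\| + \|\xd - u_n\| + \|\And\|\,\|A(\xd - u_n)\|.
\end{equation*}
Invoking the hypothesis \eqref{natterer}, the last two terms are bounded by $C\|\xd\|$, so $\|\And A \xd\| = \|\xn\| \leq (1+C)\|\xd\|$ for every $\xd \in X$ and every $n$. This yields $\sup_n \|\And A\| \leq 1 + C$, which is precisely \eqref{ubc0}, and Theorem~\ref{kith} then delivers the claimed convergence. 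Alternatively, one could bypass Theorem~\ref{kith} and argue directly: the uniform bound, combined with the elementary fact that $\And A x = x$ for every $x$ in the dense set $\bigcup_n X_n$, yields convergence on all of $X$ by a Banach--Steinhaus density argument.
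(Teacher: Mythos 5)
Your proof is correct and follows essentially the same route the paper itself takes: the paper does not prove this theorem in situ but establishes the generalization in Proposition~\ref{prnat}, whose proof likewise reduces matters to the uniform boundedness condition \eqref{ubc0} via Theorem~\ref{kith} and then bounds $\|\And A \xd\|$ by splitting off $u_n$ with the triangle inequality. The only cosmetic difference is that you exploit injectivity to get the exact identity $\And \An u_n = u_n$ (so $\xn - u_n = \And A(\xd-u_n)$), whereas the paper's general argument only uses $\|\Amnd\Amn u_n\|\le\|u_n\|$; both are sound.
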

In this theorem, 
\[ \|A_n^\dagger\|  = 
\sup_{\|\An \xn\|= 1, \xn \in X_n} \|\xn\|  = \sigma_{\rm min}^{-1}(A P_n).\] 
It is not difficult to verify that 
 \eqref{natterer} implies \eqref{ubc0},  We will generalize this result by giving a condition resembling \eqref{natterer}
which is equivalent to \eqref{ubc0} and hence yields global convergence in the general 
case (including non-injective operators and for the general projection case); see below Proposition~\ref{prnat}.

Furthermore a quite general condition has been proposed by Vainikko and H\"amarik \cite{VaHa85}
(see also \cite{Ha1,Ha2,Ha3} and \cite{HaAvGa02} and the references therein).
\begin{theorem}[Vainikko and H\"amarik]
Suppose that $N(Q_mAP_nA^*) = \{0\}.$ 
If there is a constants $C$ such that  
\begin{equation}\label{VaHa}
 \|A^* Q_m A P_n z\| \leq C \|P_n A^* Q_m A P_n z \| \qquad \forall z \in X_n, 
\end{equation}
then $\xmn \to \xd$ for all $\xd \in N(A)^\bot$.
\end{theorem}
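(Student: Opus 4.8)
The plan is to reduce the claim to the uniform boundedness condition \eqref{ubc0} and then invoke Theorem~\ref{kith}. Thus it suffices to produce a constant $C'$, independent of $n,m$, with $\|\Amnd A\|\le C'$; the non-degeneracy hypothesis $N(Q_mAP_nA^*)=\{0\}$ turns out not to be needed for this bound (it only guarantees $R(Q_mAP_n)=Y_m$, so that the singular system below exhausts $Y_m$) and may be carried along. Write $B:=\Amn=Q_mAP_n$ and let $B=\sum_i \sigma_i\langle\cdot,u_i\rangle v_i$ be its (finite) singular value decomposition, where $\{u_i\}$ is an orthonormal basis of $N(B)^\bot\subset X_n$, $\{v_i\}\subset Y_m$ is orthonormal, and $\sigma_i>0$. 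I would first record the elementary identity $B^*v_i=P_nA^*v_i=\sigma_iu_i$ (using $Q_mv_i=v_i$), so that $A^*v_i=\sigma_iu_i+r_i$ with $r_i:=(I-P_n)A^*v_i\in X_n^\bot$. Without loss of generality $C\ge 1$, since $\|P_n w\|\le\|w\|$ always.

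Next I would rewrite the solution. Since $\xmn=\Amnd A\xd=B^\dagger A\xd=\sum_i\sigma_i^{-1}\langle A\xd,v_i\rangle u_i$, inserting $\langle A\xd,v_i\rangle=\langle\xd,A^*v_i\rangle=\sigma_i\langle\xd,u_i\rangle+\langle\xd,r_i\rangle$ gives the decomposition
\[ \xmn=\Pi_{N(B)^\bot}\xd+E, \qquad E:=\sum_i\sigma_i^{-1}\langle (I-P_n)\xd,\,r_i\rangle\,u_i, \]
where $r_i\in X_n^\bot$ was used to replace $\xd$ by $(I-P_n)\xd$ in the inner product. The first term has norm at most $\|\xd\|$, so the whole game is to bound $\|E\|$ by a fixed multiple of $\|(I-P_n)\xd\|$.

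To exploit \eqref{VaHa}, I would test it on an arbitrary $z=\sum_i c_iu_i\in N(B)^\bot$. From $A^*v_i=\sigma_iu_i+r_i$, the vector $A^*Q_mAz=\sum_i c_i\sigma_iA^*v_i$ splits orthogonally into its $X_n$-part $\sum_i c_i\sigma_i^2u_i=P_nA^*Q_mAz$ and its $X_n^\bot$-part $\sum_i c_i\sigma_ir_i$; hence \eqref{VaHa} is \emph{exactly} the quadratic-form estimate
\[ \Big\|\sum_i c_i\sigma_ir_i\Big\|^2\le (C^2-1)\sum_i|c_i|^2\sigma_i^4 \qquad \text{for all coefficient vectors } (c_i). \]
The key step is then a duality argument: writing $\|E\|=\sup_{\|a\|_{\ell^2}=1}\big|\langle (I-P_n)\xd,\ \sum_i a_i\sigma_i^{-1}r_i\rangle\big|$ and applying the displayed inequality with the reindexing $c_i=a_i\sigma_i^{-2}$ (which turns $\sum_i a_i\sigma_i^{-1}r_i$ into $\sum_i c_i\sigma_ir_i$ and makes the right-hand side equal to $(C^2-1)\sum_i|a_i|^2$) yields $\|\sum_i a_i\sigma_i^{-1}r_i\|\le\sqrt{C^2-1}$. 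Consequently $\|E\|\le\sqrt{C^2-1}\,\|(I-P_n)\xd\|$ and therefore $\|\xmn\|\le(1+\sqrt{C^2-1})\|\xd\|$, which is \eqref{ubc0}. Global convergence then follows from Theorem~\ref{kith}.

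I expect the main obstacle to be precisely this duality/reindexing step: a naive termwise Cauchy--Schwarz estimate of $\|E\|$ loses a factor equal to $\dim X_n$ and is useless, so one must use the full force of \eqref{VaHa} (for all $z\in X_n$, not merely on the singular vectors) through the quadratic-form inequality above. A secondary point to get right is the bookkeeping of the orthogonal splitting $A^*v_i=\sigma_iu_i+r_i$, since this is what converts the pointwise hypothesis \eqref{VaHa} into the clean bound on the oblique-projection-type error $E$.
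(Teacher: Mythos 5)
Your argument is correct, and it reaches the conclusion by a genuinely different route than the paper. The paper does not prove this theorem directly: it treats it as a cited result and then, in Theorem~\ref{lemmasix}, shows that \eqref{VaHa} (in the equivalent form \eqref{threeA}, obtained by the same Pythagorean splitting of $A^*Q_mAP_nz$ into its $X_n$- and $X_n^\bot$-components that you use) is equivalent to the uniform boundedness condition \eqref{ubc0}; that equivalence is established abstractly, via the oblique-projection decompositions of Proposition~\ref{propang} and the angle criterion of Lemma~\ref{lemang} applied to the subspaces $R(\Upmn)$ and $X_n^\bot$ (condition \eqref{condadj}). You instead work in coordinates: the SVD of $\Amn$, the splitting $A^*v_i=\sigma_iu_i+r_i$, and the duality/reindexing step $c_i=a_i\sigma_i^{-2}$ convert \eqref{VaHa} directly into the explicit bound $\|\Amnd A\|\le 1+\sqrt{C^2-1}$, after which Theorem~\ref{kith} finishes the job exactly as in the paper. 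Your computation is in essence a concrete realization of the paper's angle argument (your $E$ is the oblique-projection error, and your quadratic-form inequality is the statement that the angle between $R(\Upmn)$ and $X_n^\bot$ is uniformly bounded away from zero), but it buys a self-contained proof with an explicit constant and needs only the one implication \eqref{VaHa}$\Rightarrow$\eqref{ubc0}, whereas the paper's machinery yields the full equivalence and embeds the condition in a family of reformulations (Theorems~\ref{lemmafive} and~\ref{lemmasix}). Your observation that the hypothesis $N(Q_mAP_nA^*)=\{0\}$ is not needed is also consistent with the paper, which imposes no such assumption in Theorem~\ref{lemmasix}. Two minor points worth making explicit if you write this up: testing \eqref{VaHa} only on $z\in N(\Amn)^\bot$ loses nothing because both sides of \eqref{VaHa} depend on $z$ only through $\Amn z$; and your final bound holds for all $\xd\in X$, not merely $\xd\in N(A)^\bot$, which is what \eqref{ubc0} requires.
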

Further results, e.g., on appropriate parameter choice  rules, are proven in  \cite{VaHa85} as well. 
We will show below (Theorem~\ref{lemmasix}) that \eqref{VaHa} is actually equivalent to \eqref{ubc0}.

A simple condition involving the product of the ill-posedness and approximation 
rate has been used by several authors (e.g., \cite{KaO12,MaSc08}) 
\begin{theorem}\label{thsimplie}
If  $\|A (I-P_n)\| \|\Amnd\| < \infty, $
then $ \xmn \to \xd$  as ${m,n \to \infty}$ for all $\xd \in N(A)^\bot$.
If for a specific $\xd,$ 
$\lim_{m,n\to \infty} \|A (I-P_n)\xd \| \|\Amnd\| = 0,$
then  $ \xmn \to \xd$  as ${m,n \to \infty}.$
\end{theorem}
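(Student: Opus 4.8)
The plan is to handle the two assertions separately. For the global statement I will verify that the hypothesis forces the uniform bound \eqref{ubc0} and then simply invoke Theorem~\ref{kith}; for the local statement I will prove a direct three-term error estimate. Throughout I will use the identities $\Amnd = P_n\Amnd Q_m$ (hence $\Amnd Q_m = \Amnd$, so $\Amnd A = \Amnd Q_m A$) and $\Amnd\Amn = \Pi_{N(\Amn)^\bot}$, together with the fact that, since $\Amn = Q_m A P_n$, one has $\Amnd A P_n x = \Amnd\Amn x = \Pi_{N(\Amn)^\bot}x$ for every $x$.

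For the global part I would write, for arbitrary $x\in X$,
\[ \Amnd A x = \Amnd Q_m A (I-P_n)x + \Pi_{N(\Amn)^\bot}x, \]
and estimate the two summands by $\|\Amnd\|\,\|A(I-P_n)\|\,\|x\|$ and $\|x\|$, respectively. Taking the supremum over $n,m$ yields $\sup_{n,m}\|\Amnd A\|\le \sup_{n,m}\|\Amnd\|\,\|A(I-P_n)\| + 1<\infty$, which is exactly the uniform boundedness condition \eqref{ubc0}; Theorem~\ref{kith} then delivers global convergence. This part is routine.

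For the local part the same decomposition gives $\xmn-\xd = \Amnd Q_m A(I-P_n)\xd - \Pi_{N(\Amn)}\xd$. Observing that $N(\Amn)=\{w : P_n w\in M_{n,m}\}$ with $M_{n,m}:=\{w\in X_n : Q_m A w=0\}\subseteq X_n$, I obtain the orthogonal splitting $N(\Amn)=M_{n,m}\oplus X_n^\bot$, whence $\Pi_{N(\Amn)}\xd=\Pi_{M_{n,m}}\xd+(I-P_n)\xd$. This leads to the estimate
\[ \|\xmn-\xd\|\le \|\Amnd\|\,\|A(I-P_n)\xd\| + \|\Pi_{M_{n,m}}\xd\| + \|(I-P_n)\xd\|. \]
The first term tends to $0$ by the local hypothesis, and the third because $P_n\to I$ strongly by \eqref{projdef}, so everything reduces to showing $\Pi_{M_{n,m}}\xd\to0$.

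The main obstacle is precisely this convergence $\Pi_{M_{n,m}}\xd\to 0$, and it is where the structure $\xd\in\NA^\bot=\overline{R(A^*)}$ must be exploited; note that it cannot follow from the norm hypotheses alone, since $M_{n,m}$ may contain genuine directions from $N(A)^\bot$. I would argue by density. For $\eta\in Y$ the element $w:=\Pi_{M_{n,m}}A^*\eta$ lies in $M_{n,m}$, so $Q_m A w=0$ and therefore
\[ \|\Pi_{M_{n,m}}A^*\eta\|^2=\langle Aw,\eta\rangle=\langle Aw,(I-Q_m)\eta\rangle\le \|A\|^2\,\|\eta\|\,\|(I-Q_m)\eta\|, \]
which tends to $0$ as $m\to\infty$, uniformly in $n$. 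Given $\varepsilon>0$, choosing $\eta$ with $\|\xd-A^*\eta\|<\varepsilon$ and using that $\Pi_{M_{n,m}}$ is a contraction yields $\limsup_{m,n\to\infty}\|\Pi_{M_{n,m}}\xd\|\le\varepsilon$, hence $\Pi_{M_{n,m}}\xd\to0$. Substituting this back into the three-term estimate completes the local statement.
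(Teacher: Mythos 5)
Your proof is correct and follows essentially the same route as the paper: the global part reduces the hypothesis to the uniform bound \eqref{ubc0} and invokes Theorem~\ref{kith}, and the local part is the error decomposition $\xmn-\xd=\Amnd Q_m A(I-P_n)\xd-\Pi_{N(\Amn)}\xd$ underlying Lemma~\ref{next}, with the nullspace term killed by density of $R(A^*)$ in $N(A)^\bot$. The only cosmetic difference is that you control $\Pi_{N(\Amn)}\xd$ by splitting $N(\Amn)=\bigl(N(Q_mA)\cap X_n\bigr)\oplus X_n^\bot$ and computing $\|\Pi_{M_{n,m}}A^*\eta\|^2=\langle Aw,(I-Q_m)\eta\rangle$ directly, whereas the paper's Lemma~\ref{ellem} exhibits the explicit approximant $P_nA^*Q_mw_\epsilon\in N(\Amn)^\bot$ — the same density argument in a slightly different dress.
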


The previous results are ones that hold uniformly for all $\xd$ 
(except for the very last one), and 
convergence for all $\xd \in N(A)^\bot$ is obtained. However, it is  of high 
interest to study conditions for convergence for one specific $\xd,$ when we do not 
care about global convergence. There are some statements concerning 
local convergence in literature. 

In a quite general situation, necessary and sufficient conditions for local convergence
have been established by Groetsch and Neubauer \cite{GrNe88,EHN96}. 

\begin{theorem}[Groetsch and Neubauer, also Du]\label{GN}
We have the following local convergence conditions for strong convergence:
 \begin{align*} 
  \xn \to  \xd & \Longleftrightarrow   \limsup_n \|\xn\| \leq \|\xd\|.
 \end{align*}

Moreover, suppose that 
\begin{equation}\label{spacecond}
\overline{\bigcup_n (N(A) \cap X_n)} = N(A). 
\end{equation}
Then we have the following local convergence conditions for weak convergence:
\begin{align} 
 \xn \rightharpoonup \xd &\Longleftrightarrow \sup_n \|\xn\| < \infty.  \label{gnweak}
 \end{align}
\end{theorem}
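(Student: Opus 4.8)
The plan is to run both equivalences through one mechanism, the \emph{consistency} relation $A\xn \to A\xd$, which holds for every $\xd$ in the attainable case. Since $\xn = \And A\xd \in X_n$, one has $A\xn = \An\xn = \An\And A\xd = \Pi_{\overline{\Ran}}A\xd$; as $X_n$ is finite-dimensional, $\Ran = A(X_n)$ is closed, so $A\xn = \Pi_{A(X_n)}A\xd$. The subspaces $A(X_n)$ increase with $\overline{\bigcup_n A(X_n)} = \overline{\RA} \ni A\xd$, whence $A\xn \to A\xd$ strongly. I would record this together with two structural facts: $\xn \in \Nan^\bot$ forces $\xn \perp (\NA \cap X_n)$; and, as $A$ is weakly continuous, every weak subsequential limit $z$ of a bounded subsequence $x_{n_k}$ satisfies $Az = A\xd$, so that $z = \xd + z_1$ with $z_1 := z - \xd \in \NA$ and, since $\xd \in \NA^\bot$, $z_1 \perp \xd$. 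Both ``$\Rightarrow$'' directions are then immediate, giving $\|\xn\| \to \|\xd\|$ in the strong case and boundedness via the uniform boundedness principle in the weak case.

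For the strong equivalence I would establish ``$\Leftarrow$'' as follows. The bound $\limsup_n \|\xn\| \le \|\xd\|$ makes $\{\xn\}$ bounded, so weak cluster points $z = \xd + z_1$ exist. Weak lower semicontinuity of the norm gives $\|z\| \le \liminf_k \|x_{n_k}\| \le \|\xd\|$, while orthogonality yields $\|z\|^2 = \|\xd\|^2 + \|z_1\|^2$; hence $z_1 = 0$ and $z = \xd$. A bounded sequence whose only weak cluster point is $\xd$ converges weakly to $\xd$, so $\|\xd\| \le \liminf_n \|\xn\|$, and together with the hypothesis this forces $\|\xn\| \to \|\xd\|$. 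Weak convergence plus convergence of norms gives $\xn \to \xd$ in $X$. I would emphasize that here the norm bound itself annihilates the $\NA$-component, so that \eqref{spacecond} is not required for this part.

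For the weak equivalence, ``$\Leftarrow$'', boundedness again produces weak cluster points $z = \xd + z_1$, and the task reduces to showing $z_1 = 0$; this is where \eqref{spacecond} enters and is the main obstacle. Given $w \in \NA$, I would invoke \eqref{spacecond} to choose $w_j \in \NA \cap X_{n_j}$ with $w_j \to w$; since the $X_n$ increase and $\xn \perp (\NA \cap X_n)$, one has $\langle \xn, w_j\rangle = 0$ whenever $n \ge n_j$. Writing $\langle x_{n_k}, w\rangle = \langle x_{n_k}, w - w_j\rangle$ for $n_k \ge n_j$ and using $\sup_k \|x_{n_k}\| =: C < \infty$ gives $|\langle x_{n_k}, w\rangle| \le C\,\|w - w_j\|$; letting first $k \to \infty$ and then $j \to \infty$ yields $\langle z, w\rangle = 0$. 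Thus $z \perp \NA$, so $z_1 \in \NA \cap \NA^\bot = \{0\}$ and $z = \xd$, and uniqueness of the weak cluster point of the bounded sequence gives $\xn \rightharpoonup \xd$. The delicate step is the interchange of the two limits in this density argument, which is precisely what \eqref{spacecond} is designed to license.
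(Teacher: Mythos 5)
Your proof is correct and follows essentially the same route as the paper's own argument (given there for the generalized Theorem~\ref{th25} and specialized to $Q_m=I$): extract weak cluster points $z$ with $z-\xd\in\NA$, use the norm bound together with the orthogonal decomposition $\|z\|^2=\|\xd\|^2+\|z-\xd\|^2$ and the Radon--Riesz property for the strong part, and use \eqref{spacecond} to force $z\in\NA^\bot$ for the weak part. The only cosmetic differences are that you verify the consistency $A\xn\to A\xd$ explicitly via the projection identity $A\xn=\Pi_{A(X_n)}A\xd$ (where the paper cites the literature) and that you phrase the space condition through approximating test elements $w_j\in\NA\cap X_{n_j}$ rather than through the equivalent form $\bigcap_n(\NA\cap X_n)^\bot=\NA^\bot$.
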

Note that the part on weak convergence in Theorem~\ref{GN}, identity \eqref{gnweak} is erroneously stated in \cite{GrNe88,EHN96} without 
the space condition \eqref{spacecond}, as has been noted by Du \cite{Du08}. 
The characterization of strong convergence is 
valid without \eqref{spacecond} and was already stated in \cite{GrNe88} (using the incomplete result for weak convergence). 
It has been rigorously proved by Du and Du \cite[Remark 4.3]{DuDu14}.
We will extend Theorem~\ref{GN} to  general projection methods with $\xmn$; see Theorem~\ref{th25}.

Besides the convergence result of Neubauer and Groetsch, a sufficient condition for strong local convergence 
without requiring information about $\xd$
has been stated by Luecke and Hickey
\cite{LuHe85}.
\begin{theorem}[Luecke, Hickey] \label{luheth}
Suppose that 
\begin{equation}\label{luhe}
\sup_n \|(\An^\dagger)^* \xn \| < \infty, 
\end{equation}
then $\xn \to \xd$. 
\end{theorem}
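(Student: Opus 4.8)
The plan is to reduce everything to the Groetsch--Neubauer criterion in Theorem~\ref{GN}, i.e.\ to show that the hypothesis \eqref{luhe} forces $\limsup_n \|\xn\| \le \|\xd\|$; strong convergence then follows at once, with no recourse to the space condition \eqref{spacecond}. Write $w_n := (\An^\dagger)^* \xn$, so that \eqref{luhe} reads $\sup_n \|w_n\| =: C < \infty$. The engine of the argument is the identity $\xn = \Ans w_n = P_n A^* w_n$. Indeed, since $\An = AP_n$ has finite rank its range is closed, $\An^\dagger \An = \Pi_{N(\An)^\bot}$ is the orthogonal projector onto $N(\An)^\bot$, and $\xn \in N(\An)^\bot$ by the minimum-norm property; hence $\Ans w_n = \An^*(\An^\dagger)^*\xn = (\An^\dagger \An)^*\xn = \An^\dagger \An \xn = \xn$, using $\Ans = P_n A^*$.

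With this identity I would next compute $\|\xn\|^2$ in two ways. First, $\|\xn\|^2 = \langle \xn, \Ans w_n\rangle = \langle \An\xn, w_n\rangle$. Because $\An\xn = \An\And A\xd = \Pi_{R(\An)} y$ and $w_n \in R((\An^\dagger)^*) = R(\An)$, the projector may be dropped, giving $\|\xn\|^2 = \langle y, w_n\rangle = \langle A\xd, w_n\rangle = \langle \xd, A^* w_n\rangle$. Second, splitting $A^* w_n = P_n A^* w_n + (I-P_n)A^* w_n = \xn + (I-P_n)A^* w_n$ and inserting this yields
\[ \|\xn\|^2 = \langle \xd, \xn\rangle + \langle A(I-P_n)\xd, w_n\rangle \le \|\xd\|\,\|\xn\| + C\,\|A(I-P_n)\xd\|. \]
Here the crucial point is that the error term is harmless: since $P_n \to I$ strongly by \eqref{projdef} and $A$ is continuous, $\|A(I-P_n)\xd\| \to 0$, so writing $\eta_n := C\,\|A(I-P_n)\xd\|$ we have $\eta_n \to 0$.

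Finally, the inequality $\|\xn\|^2 - \|\xd\|\,\|\xn\| - \eta_n \le 0$, read as a quadratic in the nonnegative quantity $\|\xn\|$, gives $\|\xn\| \le \tfrac12\bigl(\|\xd\| + \sqrt{\|\xd\|^2 + 4\eta_n}\bigr)$; letting $n\to\infty$ and using $\eta_n \to 0$ produces $\limsup_n \|\xn\| \le \|\xd\|$ (and, en passant, boundedness of $\xn$), whence $\xn \to \xd$ by Theorem~\ref{GN}. I expect the only delicate points to be the bookkeeping of Moore--Penrose identities in the non-injective setting, namely the self-adjointness of $\An^\dagger\An$, the inclusion $w_n \in R(\An)$ coming from $(\An^\dagger)^* = (\An^*)^\dagger$, and the orthogonality that lets one replace $\An\xn$ by $y$; none is deep, but each must be justified carefully since $A$ need not be injective and $R(\An) \ne R(A)$ in general.
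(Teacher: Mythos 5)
Your proof is correct, but it takes a genuinely different route from the paper. The paper does not prove Theorem~\ref{luheth} directly: it obtains it as a corollary of the more general Proposition~\ref{above90} (observing that \eqref{luhe} gives, via $\xn = P_nA^*(\An^\dagger)^*\xn$, the boundedness of both $\|\xn\|$ and $\|A^*(\An^*)^\dagger\xn\|$, so that \eqref{bbc1} applies with $Q_m=I$), and the proof of that proposition runs through the oblique-projection decomposition \eqref{undec2b}, a weak-compactness/subsequence argument, and finally the Radon--Riesz property. You instead verify the Groetsch--Neubauer norm criterion $\limsup_n\|\xn\|\le\|\xd\|$ directly: the identity $\xn=\An^*w_n$ with $w_n=(\An^\dagger)^*\xn$, the computation $\|\xn\|^2=\langle \xd,\xn\rangle+\langle A(I-P_n)\xd,w_n\rangle$, and the resulting quadratic inequality. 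All the Moore--Penrose bookkeeping you flag is sound: $(\An^\dagger)^*=(\An^*)^\dagger$ is stated in the paper, $\An^\dagger\An=\Pi_{N(\An)^\bot}$ is self-adjoint and fixes $\xn$, $w_n\in R((\An^*)^\dagger)\subset N(\An^*)^\bot=R(\An)$ (closed since finite-dimensional) justifies dropping $\Pi_{R(\An)}$, and the strong-convergence half of Theorem~\ref{GN} needs no space condition, so the appeal to it is legitimate. What your argument buys is a short, self-contained and more elementary proof, which in fact exhibits the mechanism behind the convergence rate alluded to after Theorem~\ref{luheth} (the error term is controlled by $\|A(I-P_n)\xd\|$); what the paper's route buys is generality, since Proposition~\ref{above90} covers $Q_m\neq I$ and the weaker, ``shifted-index'' condition \eqref{bcc}, which your inner-product computation does not obviously extend to.
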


This result is also proven in \cite{EHN96}, where it is also explained that 
\eqref{luhe} is quite strong (and thus not a necessary condition for convergence) as it leads to 
a convergence rate of $\xn -\xd$. In Proposition~\ref{above90} we provide a similar result 
but by employing  weaker conditions.

A subtle and important point is the space condition \eqref{spacecond}.
In case of injective operators, of course, \eqref{spacecond}
holds true but in the general case not always, not even if $N(A)$ is finite-dimensional. 
(Think, for instance, of a discretization space $X_n$ that is disjoint to $N(A)$.)
We note that $(N(A) \cap X_n)$ is an increasing family of closed subspaces, 
thus the following identity holds, (cf., e.g., \cite[Chpt. 1, § 12]{Ha51}) 
\[ \overline{\bigcup_n (N(A) \cap X_n)}^\bot = \bigcap_n  (N(A) \cap X_n)^\bot,
\]
so that \eqref{spacecond} is equivalent to 
\begin{equation}\label{spacecondprime}
{\bigcap_n (N(A) \cap X_n)}^\bot = N(A)^\bot. 
\end{equation}

A recent preprint \cite[Theorem 1.1]{DuDu14} discusses equivalent conditions to 
\eqref{spacecond} (respectively, \eqref{spacecondprime}). 
\begin{theorem}\label{thfunf}
The condition \eqref{spacecond} is equivalent to each of the following conditions, 
Here ${\cal G}(A)$ denotes the graph of an operator $A$. 
\begin{itemize}
\item \begin{equation*}
\forall x \in N(A):  \lim_{n\to \infty}  \inf_{z_n \in N(\An)} \|x - z_n\| \to 0,  \end{equation*}
\item $$ \forall (x,y) \in {\cal G}(A^\dagger):  \lim_{n\to \infty}  \inf_{(z_n,w_n) \in  {\cal G}(\And)} 
\|(x,y) - (z_n,y_n)\| \to 0,  $$ 
\item for all sequences $y_n$: 
\[  \sup_n \|\And y_n\| < \infty \mbox{ and } 
y_n \rightharpoonup y:   \Rightarrow y \in {\cal D}(A^\dagger) \mbox{ and } A^\dagger y = \And y. \]
\end{itemize}

Moreover \eqref{ubc0} is equivalent to the following two conditions holding simultaneously,  
\eqref{spacecond}  and  ${\rm gap}(R(A^*A P_n), R(A^\dagger A P_n) <1,$
where the gap between two spaces $M,N$ is defined as (see \cite[Lemma 3.2]{DuDu14}
${\rm gap}(M,N) =\|P_M - P_N\|$.
\end{theorem}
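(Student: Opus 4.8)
\emph{Proof plan.} I would establish the statement in two independent blocks: first the equivalence of the space condition \eqref{spacecond} with the three bulleted conditions, and then the characterisation of \eqref{ubc0} as \eqref{spacecond} together with the uniform gap bound. Write $N_n:=N(A)\cap X_n$; these form an increasing family of closed subspaces, so $\Pi_{N_n}$ converges strongly to $\Pi_{\overline{\bigcup_n N_n}}$, and $P_n\to I$ strongly by \eqref{projdef}. The backbone of everything is the elementary identity $N(\An)=X_n^\bot\oplus N_n$, which gives $\Pi_{N(\An)^\bot}=P_n-\Pi_{N_n}$, the decomposition
\begin{equation*}
\And A=\Pi_{N(\An)^\bot}+\And A(I-P_n),
\end{equation*}
and the fact that $\And A$ is the oblique projection onto $N(\An)^\bot$ along $A^{-1}(\overline{R(\An)}^\bot)$.

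For the first block I would anchor at the first bullet. Since $N_n\subset N(\An)$, condition \eqref{spacecond} trivially forces $\inf_{z_n\in N(\An)}\|x-z_n\|\le\mathrm{dist}(x,N_n)\to 0$ for $x\in N(A)$; conversely the orthogonal splitting $X=X_n\oplus X_n^\bot$ gives $\inf_{z_n\in N(\An)}\|x-z_n\|=\mathrm{dist}(P_nx,N_n)$, and the Pythagorean identity $\mathrm{dist}(x,N_n)^2=\|(I-P_n)x\|^2+\mathrm{dist}(P_nx,N_n)^2$ combined with $\|(I-P_n)x\|\to 0$ returns \eqref{spacecond}. For the third bullet, one direction is a weak-compactness argument: if $\sup_n\|\And y_n\|<\infty$ and $y_n\rightharpoonup y$, extract $u_{n_k}:=A_{n_k}^\dagger y_{n_k}\rightharpoonup u$; since $u_{n_k}\perp N_{n_k}\supset N_m$ for $n_k\ge m$, letting $k\to\infty$ then $m\to\infty$ and invoking \eqref{spacecond} yields $u\perp N(A)$, while $Au_{n_k}=\Pi_{\overline{R(A_{n_k})}}y_{n_k}\rightharpoonup\Pi_{\overline{R(A)}}y$ (a strongly convergent projection against a weakly convergent sequence) gives $Au=\Pi_{\overline{R(A)}}y$; hence $y\in\mathcal{D}(A^\dagger)$ and $u=A^\dagger y$, subsequence-independently. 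The converse is explicit: if \eqref{spacecond} fails, pick $0\ne v\in N(A)$ with $v\perp N_n$ for all $n$ and set $y_n:=\An v=AP_nv$; then $\And y_n=\Pi_{N(\An)^\bot}v=P_nv\to v\ne 0$ while $y_n\to Av=0$, contradicting the conclusion at $y=0$. The middle bullet I would read as convergence of graphs $\mathcal{G}(\And)\to\mathcal{G}(A^\dagger)$: its lower half holds unconditionally, since for $(y,A^\dagger y)$ the pair $(AP_nA^\dagger y+\Pi_{R(A)^\bot}y,\;P_nA^\dagger y)$ lies in $\mathcal{G}(\And)$ — using $\Pi_{R(A)^\bot}y\in R(A)^\bot\subset N(\And)$ — and converges to $(y,A^\dagger y)$; the nontrivial upper half is exactly the content of the third bullet.

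For the second block, specialising to $\An=AP_n$ (the gap condition involves only $P_n$), \eqref{ubc0} is the uniform bound $\sup_n\|\And A\|\le C$ on the oblique projections, equivalently global convergence by Theorem~\ref{kith}. Necessity of \eqref{spacecond} is again explicit: for $v\in N(A)$, $\|v\|=1$, $v\perp N_n$, set $w_n:=\Pi_{N(A)^\bot}P_nv=\Pi_{N(A)^\bot}(P_n-I)v$, which is nonzero for large $n$ (as $P_nv\to v\ne 0$) and has $\|w_n\|\to 0$; the unit vector $x_n^\dagger:=w_n/\|w_n\|\in N(A)^\bot$ then satisfies $\And Ax_n^\dagger=\And(Aw_n)/\|w_n\|=\And\An v/\|w_n\|=P_nv/\|w_n\|$, whence $\|\And A\|\ge\|P_nv\|/\|w_n\|\to\infty$; thus \eqref{ubc0} forces \eqref{spacecond}. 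Granting \eqref{spacecond}, the null-space leakage $\And A(I-P_n)$ into $N(A)$ is controlled and $\|\And A\|$ is governed by the minimal angle between the range $N(\An)^\bot$ and the kernel $A^{-1}(\overline{R(\An)}^\bot)$ of the oblique projection. The final step is to identify this angle with the gap in the statement: the normal equations $P_nA^*A(x_n-\xd)=0$ for $x_n:=\And A\xd$ place $A^*Ax_n$ in $R(A^*AP_n)$ and force $A^*A(x_n-\xd)\perp X_n$, while $R(A^\dagger AP_n)=\Pi_{N(A)^\bot}X_n$ is precisely the subspace containing the orthogonal part $\Pi_{N(A)^\bot}x_n$; matching these yields $\sup_n\mathrm{gap}\big(R(A^*AP_n),R(A^\dagger AP_n)\big)<1$ as the exact condition for boundedness.

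The routine parts are the Pythagorean and orthogonality manipulations and the two explicit counterexamples. The genuine obstacle is the last identification: converting the uniform bound on the oblique projection $\And A$ (equivalently, the minimal angle between $N(\An)^\bot$ and $A^{-1}(\overline{R(\An)}^\bot)$) into the gap between the two images $R(A^*AP_n)$ and $R(A^\dagger AP_n)$ of $A(X_n)$, and verifying that the two mechanisms genuinely decouple — that once \eqref{spacecond} holds, the only remaining source of unboundedness is the failure of the uniform gap bound. This interplay between the distortions induced by $A^*$ and by $A^\dagger$ on $A(X_n)$ is the step I expect to demand the most care.
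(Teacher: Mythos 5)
First, a point of reference: the paper does not prove Theorem~\ref{thfunf} at all --- it is quoted from \cite[Theorem 1.1]{DuDu14}, and the author explicitly proves a \emph{different} angle characterization instead (Lemma~\ref{lemang} and Theorem~\ref{lemmafive}, using $N(\Upmns)$ and $R(\Upmns)$ rather than the Du--Du subspaces), plus a separate proposition that \eqref{ubc} implies \eqref{spacecond}. So there is no in-paper proof to compare against, and your attempt has to stand on its own. Your first block does stand: the identity $N(\An)=X_n^\bot\oplus(N(A)\cap X_n)$, the Pythagorean reduction for the first bullet, and the weak-compactness argument plus the explicit counterexample $y_n=\An v$ for the third bullet are all correct and complete. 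Your observation that the second bullet \emph{as printed} holds unconditionally (via the pair $(AP_nA^\dagger x+\Pi_{R(A)^\bot}x,\,P_nA^\dagger x)\in\mathcal{G}(\And)$) is, as far as I can check, right --- which means the printed condition cannot be equivalent to \eqref{spacecond} and the intended condition must be two-sided (Mosco-type) graph convergence. You should state this as a correction of the statement rather than slip it in as an aside, since otherwise your "proof" of that bullet proves a different claim than the one written.

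The genuine gap is the second half of the theorem. You prove correctly that \eqref{ubc0} forces \eqref{spacecond} (the construction $w_n=\Pi_{N(A)^\bot}P_nv$ with $\|\And A\|\ge\|P_nv\|/\|w_n\|\to\infty$ is fine, though the claim $w_n\neq 0$ needs the extra line: $w_n=0$ would put $P_nv\in N(A)\cap X_n$, hence $P_nv\perp v$, hence $P_nv=0$). But the actual content of the second assertion --- that, given \eqref{spacecond}, the uniform bound $\sup_n\|\And A\|<\infty$ holds \emph{if and only if} $\sup_n\mathrm{gap}\bigl(R(A^*AP_n),R(A^\dagger AP_n)\bigr)<1$ with $\mathrm{gap}(M,N)=\|\Pi_M-\Pi_N\|$ --- is never established. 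You describe $\And A$ as the oblique projection onto $N(\An)^\bot$ along $A^{-1}(\overline{R(\An)}^\bot)$ and gesture at "matching" the normal equations with the two subspaces, but no inequality in either direction is derived: you do not show how a uniform bound on $\|\Pi_{R(A^*AP_n)}-\Pi_{R(A^\dagger AP_n)}\|$ controls the norm of that oblique projection, nor how unboundedness of the projection forces the gap to approach $1$ (note also that $\|\Pi_M-\Pi_N\|<1$ is a symmetric, two-sided condition, unlike the one-sided products $\|\Pi_X\Pi_Y\|$ used in Lemma~\ref{lemang}, so the translation is not a one-liner). You candidly flag this as "the step I expect to demand the most care" --- but it is precisely the nontrivial half of the claim, so as it stands the second part of the theorem is asserted, not proved. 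Everything else in the proposal is routine or correctly executed; this identification is the missing idea.
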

We will extend the second part of this result and we show that  \eqref{ubc0} can be equivalently be characterized
as a certain angle (or gap) between subspaces (but not those in this theorem)
to be smaller then one; cf. Lemma~\ref{lemang}. Moreover, we also study local convergence also when \eqref{spacecond} is not satisfied.

The subtle fact that boundedness of $\xn$ is {\em not enough} for weak
convergence and that an additional condition, e.g., like \eqref{spacecond}, 
is needed,  is not very well-known. Du \cite[Example 2.10]{Du08} gave a counterexample 
of a sequence of $\|\xn\|$ being uniformly bounded but which does not converge weakly to $\xd$. 

\begin{example}[Du] \label{alb}
There exists a linear operator $A$ and a $\xd \in N(A)^\bot$ such that 
\[ \sup_n \|\xn\| < \infty, \]
but $\xn \not \rightharpoonup \xd$. 
In this example, however, $\xn$ converges strongly to some element $\not = \xd$.
\end{example} 
The operator in this counterexample is actually not ill-posed but a simple 
projection operator onto the complement of a one-dimensional subspace, $A = I - (.,e) e$
with some appropriately chosen $e$. Failure of convergence happens because
 $\xn$ converges (even strongly) but to the ``wrong'' solution.

Below in Theorem~\ref{neucount}, we give a counterexample that is even more extreme: 
a situation like in the previous result, Example~\ref{alb}, but where 
the sequence $\xn$ does \emph{not} converge at all (not even weakly). 
This example has been devised by Neubauer \cite{Nepr}.
\begin{example}[Neubauer]
There exists a linear operator $A$ and a $\xd \in N(A)^\bot$ such that 
\[ \sup_n \|\xn\| < \infty, \]
but $\xn$ does not converge weakly. 

Moreover, the sequence $\xn$ has a subsequence, which converges weakly but 
with limit $u \not = \xd$, and no weakly convergent subsequence has limit $\xd$.
\end{example}

\subsection{Preliminary lemmas}
Since $\xmn$ is always in $N(\Amn)^\bot$, it is important to study these spaces.
We note that by discretization,  $\Amn$ is an operator with closed range. 
We have the well-known duality relations,
\begin{equation}\label{dual} R(\Amn^*) = N(\Amn)^\bot  \quad \text{ and } \quad 
 N(\Amn^*) = R(\Amn)^\bot, \end{equation}
and all these spaces are closed.

The following characterizations follow easily. 
\begin{lemma}\label{lemmaone}
\begin{align}
N(\Amn) &= 
 \{ x \in X \,| x  = w_n + q_n : w_n \in X_n \cap N(Q_mA), q_n \in X_n^\bot \}   
 \label{onen} \\ 
\begin{split}  
N(\Amn)^\bot &= \{ x \in X_n \cap  (N(Q_mA)\cap X_n)^\bot \} \\
&  = \{ x \in X_n | \exists  v_n: x = P_n A^* Q_m v_n \},  \nonumber
\end{split}
\end{align}
\end{lemma}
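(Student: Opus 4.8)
The plan is to establish the three identities in order: derive \eqref{onen} directly from the definition of $\Amn$, obtain the first description of $N(\Amn)^\bot$ by orthogonal complementation, and deduce the second description from the duality relation \eqref{dual}. No deeper machinery is needed since $X_n$ is finite-dimensional and all spaces in sight are closed.

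First I would describe the kernel. Since $\Amn = Q_m A P_n$, the equation $\Amn x = 0$ holds precisely when $P_n x \in N(Q_m A)$. Writing the orthogonal decomposition $x = P_n x + (I-P_n)x$ with $P_n x \in X_n$ and $(I-P_n)x \in X_n^\bot$, one sees that $x \in N(\Amn)$ if and only if its component $w_n := P_n x$ lies in $X_n \cap N(Q_m A)$, the component $q_n := (I-P_n)x \in X_n^\bot$ being unconstrained. This is exactly \eqref{onen}, and it exhibits $N(\Amn)$ as the sum
\[ N(\Amn) = \bigl(X_n \cap N(Q_m A)\bigr) + X_n^\bot \]
of a finite-dimensional subspace of $X_n$ and the orthogonal complement $X_n^\bot$.

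For the first description of $N(\Amn)^\bot$ I would simply take orthogonal complements in the previous display. Using $(U+V)^\bot = U^\bot \cap V^\bot$ together with $(X_n^\bot)^\bot = X_n$ (valid because the finite-dimensional $X_n$ is closed) gives
\[ N(\Amn)^\bot = \bigl(X_n \cap N(Q_m A)\bigr)^\bot \cap X_n = X_n \cap \bigl(N(Q_m A)\cap X_n\bigr)^\bot, \]
which is the claimed form. For the second description I would instead route through the duality relation \eqref{dual}, which yields $N(\Amn)^\bot = R(\Amns)$. Since the orthogonal projectors $P_n$ and $Q_m$ are self-adjoint, the adjoint factorizes as $\Amns = (Q_m A P_n)^* = P_n A^* Q_m$, whence $N(\Amn)^\bot = R(P_n A^* Q_m) = \{\, P_n A^* Q_m v_n : v_n \in Y \,\}$ (one may equally take $v_n \in Y_m$, as $Q_m v_n = Q_m(Q_m v_n)$). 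Because each such element already lies in $X_n$, this set is precisely $\{ x \in X_n \mid \exists\, v_n : x = P_n A^* Q_m v_n \}$.

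Every individual step is elementary, so there is no genuine obstacle, only a matter of organization. The one point of technique I would emphasize is \emph{not} to attempt a head-on identification of the two descriptions of $N(\Amn)^\bot$: comparing $X_n \cap (N(Q_mA)\cap X_n)^\bot$ directly with $R(P_n A^* Q_m)$ is awkward. Deriving each independently from $N(\Amn)^\bot$ — the first by complementing \eqref{onen}, the second by the duality \eqref{dual} — makes their equality automatic and keeps the argument clean.
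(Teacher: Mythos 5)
Your proposal is correct and follows essentially the same route as the paper: the decomposition $x = P_n x + (I-P_n)x$ for the kernel, orthogonal complementation for the first description of $N(\Amn)^\bot$ (your use of $(U+V)^\bot = U^\bot \cap V^\bot$ is just a cleaner packaging of the paper's ``take $w_n$ or $q_n$ to be $0$'' argument), and the duality relation \eqref{dual} for the range description. No gaps.
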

\begin{proof}
The first identity \eqref{onen} follows easily 
by $x = P_n x + (I-P_n)x$. 
It is straightforward to proof that any $x \in X_n \cap (X_n \cap N(Q_m A))^\bot$  
is in $N(\Amn)^\bot$. Conversely, by taking either $w_n$ or $q_n$ to be $0,$
it follows that any $x \in N(\Amn)^\bot$ must be in both $X_n$ and $(X_n \cap N(Q_m A))^\bot$.
The last identity is \eqref{dual}.
\end{proof}
Note that these spaces are not necessarily nested. 
However, the following inclusions can be  verified:
\begin{align} N(Q_m A) \cap X_n & \ \subset \   N(Q_m A) \cap X_{n+1} \ \subset  \ldots 
\ N(Q_m A),  \label{nested1} \\
N(Q_m A) \cap X_n &\ \supset \ N(Q_{m+1} A) \cap X_{n} \ \supset \ldots 
\ N(A) \cap X_n, \label{nested2} \\
 N(A) \cap X_n &= \bigcap_m \left(N(Q_m A) \cap X_n\right). \label{spxx}
\end{align}
A simple consequence is that, for a fixed $x$,  the norm of the projection\linebreak
$\|\Pi_{N(Q_m A) \cap X_n} x\|,$ is increasing in $m$ (for fixed $n$) and 
decreasing in $n$ (for fixed $m$).

We now state some approximation results, namely that 
elements in $N(A)^\bot$, i.e., the space where $\xd$ lives, can be 
approximated arbitrary well by elements in the corresponding discrete 
space $N(\Amn)^\bot$. 
 
\begin{lemma}\label{ellem}
For all $x \in N(A)^\bot$, 
\begin{equation}\label{app}
\lim_{n,m\to \infty} \inf_{z\in R(\Amn^*)} \|x - z\| =
\lim_{n,m\to \infty} \inf_{z\in N(\Amn)^\bot} \|x - z\| = 0\, ,
\end{equation}
\end{lemma}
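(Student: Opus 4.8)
The plan is to reduce the claim to showing that the distance from $x$ to $N(\Amn)^\bot$ tends to zero, and then to split this distance into a piece controlled by the $X_n$-discretization and a piece controlled by the $Y_m$-discretization. First I would observe that, by the duality relation $R(\Amns)=N(\Amn)^\bot$ in \eqref{dual}, the two infima in \eqref{app} are taken over literally the same set, so it suffices to treat $\inf_{z\in N(\Amn)^\bot}\|x-z\|$. Since $N(\Amn)^\bot$ is closed, this infimum is attained at the orthogonal projection of $x$ onto it, and the distance therefore equals $\|\Pi_{N(\Amn)}x\|$, the norm of the projection of $x$ onto the orthogonal complement $N(\Amn)$.

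Next I would invoke the structural description of $N(\Amn)$ from Lemma~\ref{lemmaone}, equation \eqref{onen}, which exhibits $N(\Amn)$ as the orthogonal direct sum $(X_n\cap N(Q_mA))\oplus X_n^\bot$. As these two summands are mutually orthogonal (the first lies in $X_n$, the second in $X_n^\bot$), the projector splits as $\Pi_{N(\Amn)}=\Pi_{X_n\cap N(Q_mA)}+(I-P_n)$, whence by the Pythagorean identity
\[
\|\Pi_{N(\Amn)}x\|^2=\|\Pi_{X_n\cap N(Q_mA)}x\|^2+\|(I-P_n)x\|^2.
\]
The second term tends to $0$ as $n\to\infty$ by \eqref{projdef}, independently of $m$, so the whole matter comes down to controlling the first term.

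For the first term I would bound it, \emph{uniformly in} $n$, by the projection onto the larger space, $\|\Pi_{X_n\cap N(Q_mA)}x\|\le\|\Pi_{N(Q_mA)}x\|$, using the inclusion $X_n\cap N(Q_mA)\subset N(Q_mA)$. The subspaces $N(Q_mA)$ are nested decreasingly in $m$ (cf.\ \eqref{nested2}), and their intersection is exactly $N(A)$: indeed $x\in\bigcap_m N(Q_mA)$ means $Ax\perp Y_m$ for every $m$, hence $Ax\perp\overline{\bigcup_m Y_m}=Y$ by \eqref{projdef}, i.e.\ $Ax=0$. By the standard monotone convergence of orthogonal projectors onto a decreasing sequence of closed subspaces (cf.\ \cite{Ha51}), $\Pi_{N(Q_mA)}x\to\Pi_{N(A)}x=0$ for $x\in N(A)^\bot$, and this convergence is uniform in $n$ since the bounding quantity $\|\Pi_{N(Q_mA)}x\|$ carries no $n$-dependence.

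Finally I would assemble the joint limit: given $\varepsilon>0$, choose $M$ so that $\|\Pi_{N(Q_mA)}x\|<\varepsilon$ for all $m\ge M$ (valid for every $n$) and $N$ so that $\|(I-P_n)x\|<\varepsilon$ for all $n\ge N$; then $\|\Pi_{N(\Amn)}x\|<\sqrt2\,\varepsilon$ for all $n\ge N$, $m\ge M$, which is the asserted double limit. I expect the main obstacle to be exactly this interchange-of-limits point: one must secure the $m$-convergence uniformly in $n$, and it is precisely the crude inclusion bound $X_n\cap N(Q_mA)\subset N(Q_mA)$ that buys this uniformity. The supporting facts — monotone convergence of projections onto a decreasing family and the identification of $\bigcap_m N(Q_mA)$ with $N(A)$ — are the technical heart, but each is elementary once the decomposition above is in place.
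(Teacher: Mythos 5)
Your proof is correct, but it follows a genuinely different route from the paper's. The paper argues by density and explicit construction: it writes $x\in N(A)^\bot=\overline{R(A^*)}$, picks $w_\epsilon$ with $\|x-A^*w_\epsilon\|\le\epsilon$, and then checks directly that $P_nA^*Q_mw_\epsilon\in R(\Amns)$ approximates $x$ to within $2\epsilon$ for $n,m$ large, using only \eqref{projdef}. You instead compute the distance exactly as $\|\Pi_{N(\Amn)}x\|$, use the orthogonal splitting $N(\Amn)=(X_n\cap N(Q_mA))\oplus X_n^\bot$ from Lemma~\ref{lemmaone} to get the Pythagorean identity $\|\Pi_{N(\Amn)}x\|^2=\|\Pi_{X_n\cap N(Q_mA)}x\|^2+\|(I-P_n)x\|^2$, and then kill the first term uniformly in $n$ via the crude bound $\|\Pi_{X_n\cap N(Q_mA)}x\|\le\|\Pi_{N(Q_mA)}x\|$ together with monotone convergence of projections onto the decreasing family $N(Q_mA)$ whose intersection is $N(A)$. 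The price of your route is that it invokes an extra (standard) fact, the strong convergence $\Pi_{N(Q_mA)}\to\Pi_{N(A)}$ for a decreasing nested family; what it buys is an exact decomposition of the distance that cleanly separates the $X_n$-approximation error from the $Y_m$-discretization error and makes the uniformity in $n$ of the $m$-limit explicit, whereas the paper's construction handles the joint limit implicitly through the single threshold $n_0$. Both arguments are complete; your identification of the interchange-of-limits issue as the technical crux, and your resolution of it via the $n$-independent majorant, are sound.
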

\begin{proof}
Let $x \in N(A)^\bot = \overline{R(A^*)}=\overline{R(A^*A)}$. 
Then for any $\epsilon>0$ fixed, we 
can find a $w_{\epsilon}$ such that 
\[ \|x - A^*w_\epsilon \| \leq \epsilon, 
.\]
Moreover by \eqref{projdef}, 
with $\epsilon$ and  $w_{\epsilon}$ as before,
we can find a $n_0$  such that for all $n \geq n_0$ and $m \geq n_0$
\[ \|P_n A^*w_{\epsilon} - A^*w_{\epsilon} \| \leq \epsilon, \quad 
\|Q_m w_{\epsilon} -w_{\epsilon}\| \leq \frac{\epsilon}{\|A^*\|}, \]
thus, 
\[ \|x - P_n A^*Q_m w_{\epsilon}\| \leq 
\|x - P_n A^* w_{\epsilon}\|  + \| P_n A^* (I-Q_m) w_\epsilon\| \leq 
2 \epsilon. \]
which yields \eqref{app}. 
Note that  $P_n A^*Q_m w_{\epsilon}$ is in $R(\An^*) = N(A_n)^\bot.$
\end{proof}

We remark that $N(\Amn)^\bot$ is not necessarily a subspace of $N(A)^\bot$ so that 
it  is not correct to say that $N(\Amn)^\bot$ is dense in $N(A)^\bot$.

Furthermore, the operators $\Upmn$ and $\Upmns$ will play an important role in the subsequent analysis. 
\begin{lemma}
\begin{equation*}
N(\Amn) = N(\Upmn) \qquad 
R(\Upmns)
= R(\Amns)\end{equation*}
\end{lemma}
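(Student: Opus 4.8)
The plan is to reduce both identities to the single elementary fact that $N(T^*T) = N(T)$ for any bounded operator $T$, applied with $T := Q_m A$, together with the duality relations \eqref{dual}. The key algebraic observation is that, since $Q_m$ is an orthogonal projector and hence satisfies $Q_m = Q_m^* = Q_m^2$, one has $A^* Q_m A = (Q_m A)^*(Q_m A)$. Consequently $\Upmn = A^*\Amn = (Q_m A)^*(Q_m A)P_n$, which exhibits exactly the symmetric structure needed.

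For the first identity I would proceed as follows. Set $T := Q_m A$, so that $\Upmn x = T^*T(P_n x)$ for every $x \in X$. The inclusion $N(\Amn) \subseteq N(\Upmn)$ is immediate. Conversely, if $\Upmn x = 0$ then $T^*T(P_n x) = 0$, and pairing against $P_n x$ gives $\|T P_n x\|^2 = \langle T^*T P_n x, P_n x\rangle = 0$; hence $T P_n x = Q_m A P_n x = \Amn x = 0$, i.e.\ $x \in N(\Amn)$. This establishes $N(\Amn) = N(\Upmn)$.

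For the second identity I would pass to adjoints. A direct computation gives $(\Upmn)^* = (A^*\Amn)^* = \Amn^* A = \Amns A = \Upmns$, so $\Upmns$ is precisely the adjoint of $\Upmn$. Taking orthogonal complements in the first identity and invoking the general relation $N(S)^\bot = \overline{R(S^*)}$ for $S = \Upmn$ and for $S = \Amn$ respectively, I obtain $\overline{R(\Upmns)} = N(\Upmn)^\bot = N(\Amn)^\bot = \overline{R(\Amns)}$, where the central equality is the first identity just proved (and where $N(\Amn)^\bot = R(\Amns)$ may alternatively be read off directly from \eqref{dual}). The only point that requires care is that these closures are superfluous: both $\Upmns$ and $\Amns$ map $X$ into the finite-dimensional space $X_n$, so their ranges are finite-dimensional and therefore closed, whence $R(\Upmns) = R(\Amns)$.

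I do not expect any genuine obstacle here; the entire argument is driven by the $T^*T$ factorization and by \eqref{dual}. The sole subtlety is the closed-range bookkeeping in the second part, which is dispatched immediately by the finite-dimensionality of $X_n$ built into the discretization \eqref{projdef}.
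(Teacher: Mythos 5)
Your proof is correct and follows essentially the same route as the paper's: the paper also deduces $N(\Upmn)\subset N(\Amn)$ by passing from $A^*Q_mAP_nx=0$ to $P_nA^*Q_mQ_mAP_nx=\Amns\Amn x=0$ (the same $T^*T$ argument you make explicit by pairing against $P_nx$), and then obtains the range identity from the duality relations \eqref{dual}. Your extra remark that the closures are harmless because the ranges lie in the finite-dimensional space $X_n$ is a point the paper leaves implicit (having noted earlier that the discretized operators have closed range), but it is the same argument.
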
 
\begin{proof}
Clearly $N(\Amn) \subset N(A^*\Amn) = N(\Upmn).$ Conversely 
for $x \in N(\Upmn)$ we have $A^* Q_m A P_n x = 0$, and hence also 
$P_n A^*Q_m  Q_m A P_n x =0$, thus \mbox{$x \in N(\Amn),$} which shows $N(\Amn) = N(\Upmn),$
and  by \eqref{dual} the lemma follows.  
\end{proof}
Using $\Upmn$ and $\Upmn$, we have a characterization of the solution operator 
$\Amnd A$ as a certain nonorthogonal projection operator.
\begin{proposition}\label{propang}
For any $x \in X$ and any $n,m \in \N,$  we have the unique decomposition 
\begin{equation}\label{undec1} x = v_{n,m} + u_{n,m} \qquad v_{n,m} \in R(\Upmns), u_{n,m} \in N(\Upmns). 
\end{equation}
Moreover the mapping $x \to v_{n,m}$ is given by $\Amnd A$, i.e., 
\[ v_{n,m} = \Amnd A x.  \]
For any $x \in X$ any $n,m \in \N,$  we have the unique decomposition
\begin{align} 
x &= \overline{v}_{n,m} + \overline{u}_{n,m} \qquad \
\overline{v}_{n,m} \in R(\Upmn), \overline{u}_{n,m} \in N(\Upmn) \label{undec2a} \\
& = \overline{v}_{n,m} + \overline{w}_{n,m} + \overline{q}_{n,m} \quad 
  \begin{array}{l} \overline{v}_{n,m} \in R(\Upmn),\\[1mm]  \overline{w}_{n,m} \in N(Q_mA) \cap X_n,\\[1mm]
  \overline{q}_{n,m} \in X_n^\bot. \end{array} \label{undec2b}
\end{align}
Here, $\overline{v}_{n,m},\overline{u}_{n,m}, \overline{w}_{n,m}, \overline{q}_{n,m}$ 
are uniquely determined. 
Moreover, the mapping $x \to \overline{v}_{n,m}$ is given by $A^*\Amnsd,$ i.e., 
\[ \overline{v}_n = A^*\Amnsd x, \]
and the mapping $x \to  \overline{w}_{n,m}$ is given by 
\begin{equation}\label{subsub} \overline{w}_{n,m} = \Pi_{N(Q_mA)\cap X_n} x. 
\end{equation}
\end{proposition}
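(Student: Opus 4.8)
The plan is to recognize the map $x\mapsto v_{n,m}$ as an oblique projection and to deduce the two remaining decompositions from it by duality and by the structure lemma. For the decomposition \eqref{undec1} I would first combine the preceding lemma with the duality relations \eqref{dual} to record that $R(\Upmns)=R(\Amns)=N(\Amn)^\bot$ is closed and, since $\Upmns=P_nA^*Q_mA$, contained in $X_n$, while $N(\Upmns)=N(\Amns A)$. Existence of the splitting with $v_{n,m}=\Amnd A x$ then comes from the Moore--Penrose identity $\Amns\Amn\Amnd=\Amns$: applied to $Ax$ and restricted to $X_n$ (where $\Amns\Amn$ agrees with $\Upmns$ and $\Amns A=\Upmns$ as operators) it gives $\Upmns(\Amnd A x)=\Upmns x$, so $x-\Amnd A x\in N(\Upmns)$, while $\Amnd A x\in R(\Amnd)=N(\Amn)^\bot=R(\Upmns)$.

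Uniqueness, equivalently the direct-sum property, reduces to $R(\Upmns)\cap N(\Upmns)=\{0\}$. For $w$ in this intersection, $w\in R(\Upmns)\subseteq X_n$ gives $\Amn w=Q_mAw$, whence $\langle\Upmns w,w\rangle=\langle\Amns Aw,w\rangle=\langle Aw,\Amn w\rangle=\|Q_mAw\|^2$; since $\Upmns w=0$ this forces $w\in N(Q_mA)\cap X_n$, but $w\in N(\Amn)^\bot\subseteq(N(Q_mA)\cap X_n)^\bot$ by Lemma~\ref{lemmaone}, so $w=0$. This positivity computation together with Lemma~\ref{lemmaone} is the \emph{crux}; everything else is bookkeeping. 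A byproduct is that $\Amnd A$ fixes $R(\Upmns)$ pointwise, hence $(\Amnd A)^2=\Amnd A$, so $\Amnd A$ is idempotent with range $R(\Upmns)$ and kernel $N(\Upmns)$.

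For \eqref{undec2a} I would pass to adjoints. Because $(\Amnd)^*=\Amnsd$, the map $x\mapsto\overline{v}_{n,m}$ is $(\Amnd A)^*=A^*\Amnsd$, again idempotent, with range $N(\Amnd A)^\bot$ and kernel $R(\Amnd A)^\bot$. Using $\Upmns=(\Upmn)^*$ and that both ranges are closed, the identity $N(S)=R(S^*)^\bot$ turns these into $R(\Upmn)=N(\Upmns)^\bot$ and $N(\Upmn)=R(\Upmns)^\bot$, which is exactly the decomposition $x=\overline{v}_{n,m}+\overline{u}_{n,m}$ with $\overline{v}_{n,m}=A^*\Amnsd x\in R(\Upmn)$ and $\overline{u}_{n,m}\in N(\Upmn)=N(\Amn)$.

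Finally, for \eqref{undec2b} and \eqref{subsub} I would feed $\overline{u}_{n,m}\in N(\Amn)$ into the description \eqref{onen}, splitting it orthogonally as $\overline{w}_{n,m}+\overline{q}_{n,m}$ with $\overline{w}_{n,m}\in N(Q_mA)\cap X_n$ and $\overline{q}_{n,m}\in X_n^\bot$; orthogonality yields uniqueness and $\overline{w}_{n,m}=\Pi_{N(Q_mA)\cap X_n}\overline{u}_{n,m}$. To replace $\overline{u}_{n,m}$ by $x$ it suffices that $\overline{v}_{n,m}\perp N(Q_mA)\cap X_n$, which is the one-line computation $\langle A^*Q_mAP_n\phi,w\rangle=\langle AP_n\phi,Q_mAw\rangle=0$ for $w\in N(Q_mA)$. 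Hence $\Pi_{N(Q_mA)\cap X_n}x=\overline{w}_{n,m}$, giving \eqref{subsub}.
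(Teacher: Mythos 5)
Your proof is correct; it reaches the same conclusions as the paper but by a partly different route, and the differences are worth noting. For \eqref{undec1} the substance is the same: your Moore--Penrose identity $\Amns\Amn\Amnd=\Amns$ applied to $Ax$ is exactly the paper's statement that $v_{n,m}=\Amnd A x$ satisfies the normal equations $P_nA^*Q_mA(v_{n,m}-x)=0$. You diverge on uniqueness: the paper argues that any competing $v'$ in $R(\Upmns)=N(\Amn)^\bot$ satisfying the normal equations must coincide with $\Amnd Ax$ by uniqueness of the minimum-norm least-squares solution, whereas you prove $R(\Upmns)\cap N(\Upmns)=\{0\}$ directly via the positivity computation $\langle\Upmns w,w\rangle=\|Q_mAw\|^2$ combined with Lemma~\ref{lemmaone}; both are valid, and your version makes the direct-sum (oblique projection) structure explicit, which is the theme the paper exploits later. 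The larger divergence is in \eqref{undec2a}: the paper runs a second, parallel normal-equations argument for $z_{n,m}=\Amnsd x$ (using $N(\Amns)^\bot=R(\Amn)$ and $\overline{v}_{n,m}-x=A^*z_{n,m}-x\in N(\Amn)=N(\Upmn)$), while you simply take adjoints of the idempotent $\Amnd A$ and use $R(P^*)=N(P)^\bot$, $N(P^*)=R(P)^\bot$ together with $\Upmn=(\Upmns)^*$. Your route is more economical (one normal-equation computation instead of two) and makes the relation between the two oblique projections transparent; the paper's is more self-contained and symmetric in the two cases. The treatment of \eqref{undec2b} and \eqref{subsub} -- splitting $\overline{u}_{n,m}\in N(\Amn)$ via \eqref{onen} and checking $\overline{v}_{n,m}\perp N(Q_mA)$ and $\overline{q}_{n,m}\perp X_n$ before applying $\Pi_{N(Q_mA)\cap X_n}$ -- is identical to the paper's.
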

\begin{proof}
Define $v_{n,m} = \Amnd A x$, then $v_{n,m} \in N(\Amn)^\bot = 
N(\Upmn)^\bot = R(\Upmns).$ 
In particular, we have  $v_{n,m} \in X_n.$
Moreover $v_n$ satisfies the normal equations 
\[0 =  P_n A^* Q_m (A P_n v_n - A x) =  
 P_n A^* Q_m A (v_n - x). \] 
Thus $v_n - x \in N(\Upmns)$ yielding the desired decomposition \eqref{undec1}. 
Conversely, for any other decomposition as above, it follows that 
$v_n \in N(\Amn)^\bot,$ and it satisfies the normal equations. 
By  uniqueness of the minimal-norm least-squares solution, 
if follows that $v_n = \Amn^\dagger A x$. Thus the decomposition is unique.
For the second part, define 
$z_{n,m} = \Amnsd x,$ then 
$z_{n,m} \in N(\Amns)^\bot = R(\Amn)$. 
In particular $z_{n,m} \in Y_m.$ 
The normal equation implies that $Q_m A P_n A^*  z_{n,m} - Q_mA P_n x =0$, thus 
$\overline{v}_{n,m} - x = A^*z_{n,m} - x \in N(\Amn) = N(\Upmn)$, which gives the 
decomposition. 
Any other decomposition of the form 
$x = A^* \Amn p + N(\Amn)$ implies that $\Amn p$ satisfies the same
normal equation as $z_{n,m}$ and it clearly  is in $R(\Amn) = N(\Amns)^\dagger$, thus 
by the uniqueness of the minimal-norm least-squares solution, we have
$\Amn p = \Ands x.$ Hence, $A^* \Amn p = A^*\Ands x,$ which implies the unique decomposition \eqref{undec2a}.  
The decomposition of 
$\overline{u}_{n,m},$ into  $\overline{w}_{n,m} +\overline{q}_{n,m}$  exists by 
the characterization of $N(\Amn)$ in \eqref{onen} and is clearly unique since 
$\overline{w}_{n,m}$ is orthogonal to $\overline{q}_{n,m}.$
Since $ \overline{v}_{n,m} = A^* z_{n,m} = A^* Q_m z_{n,m},$ it follows that 
$\overline{v}_{n,m}$ is orthogonal to  $N(Q_m A)$ and clearly 
$\overline{q}_{n,m}$ is orthogonal to $X_n,$ hence 
 applying  $\Pi_{N(A)\cap X_n}$ to the 
decomposition gives the representation for $\overline{w}_{n,m}.$
\end{proof}

\begin{remark}
This proposition will be used widely; in particular, we recognize that 
$\xmn = \Amnd A \xd$ is the first element in \eqref{undec1} in the 
decomposition of $\xd$ and thus  $\xmn$ is  the result of 
a nonorthogonal (oblique) projection applied to $\xd$.

From \eqref{undec1}, \eqref{undec2a} we also obtain the nontrivial fact that 
\[ N(\Upmns) \cap R(\Upmns) = \emptyset, \quad \text{ and} \quad 
N(\Upmn) \cap R(\Upmn) = \emptyset.
\]
\end{remark} 
As a corollary we have a formula for $\Pi_{N(Q_m A)\cap X_n}.$ 

\begin{corollary}\label{cor10}
For any $k \leq n$, 
\[ P_k \Pi_{N(Q_m A)\cap X_n} = P_k - A_k^* \Amnsd,  \]
\[ \Pi_{N(Q_m A)\cap X_n}P_k  = P_k - \Amnd A_k. \]
\end{corollary}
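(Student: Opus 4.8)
The plan is to read both identities directly off the decomposition \eqref{undec2b} established in Proposition~\ref{propang}. Recall that for any $x \in X$ that decomposition gives the unique splitting $x = \overline{v}_{n,m} + \overline{w}_{n,m} + \overline{q}_{n,m}$ with $\overline{v}_{n,m} = A^*\Amnsd x$, with $\overline{w}_{n,m} = \Pi_{N(Q_m A)\cap X_n} x$ by \eqref{subsub}, and with $\overline{q}_{n,m} \in X_n^\bot$. Rearranging isolates the projector of interest: $\Pi_{N(Q_m A)\cap X_n} x = x - A^*\Amnsd x - \overline{q}_{n,m}$.

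First I would establish the first identity by composing this relation on the left with $P_k$. The key observation is that the nesting $X_k \subset X_n$, valid precisely because $k \le n$, forces $X_n^\bot \subset X_k^\bot$, so that $P_k \overline{q}_{n,m} = 0$ for the $X_n^\bot$-component. Together with $P_k A^* = (A P_k)^* = A_k^*$, which comes from the definition $A_k = A P_k$ in \eqref{mnls}, applying $P_k$ yields $P_k \Pi_{N(Q_m A)\cap X_n} x = P_k x - A_k^* \Amnsd x$; since $x$ is arbitrary, this is the first claim.

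The second identity I would obtain simply by taking adjoints of the first. Both $P_k$ and $\Pi_{N(Q_m A)\cap X_n}$ are orthogonal projectors and hence self-adjoint, so $(P_k \Pi_{N(Q_m A)\cap X_n})^* = \Pi_{N(Q_m A)\cap X_n} P_k$. On the right-hand side I would invoke the adjoint rule $(\Amnd)^* = \Amnsd$ recorded earlier, which applied to the operator $\Amns$ equivalently gives $(\Amnsd)^* = \Amnd$, together with $(A_k^*)^* = A_k$; this turns $(P_k - A_k^* \Amnsd)^*$ into $P_k - \Amnd A_k$, exactly as required.

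I expect no genuine obstacle here, as the whole content of the corollary is already packaged in Proposition~\ref{propang}. The only points requiring care are the bookkeeping of the adjoint relations for $\Amnd$ and $\Amnsd$, and the observation that the hypothesis $k\le n$ is precisely what is needed so that $P_k$ annihilates the $X_n^\bot$-term; the remainder is a short calculation rather than a new argument.
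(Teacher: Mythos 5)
Your proof is correct and follows essentially the same route as the paper: apply $P_k$ to the decomposition \eqref{undec2b}, note that $P_k$ kills the $X_n^\bot$-component because $k\le n$ and that $P_kA^* = A_k^*$, then obtain the second identity by taking adjoints using the self-adjointness of orthogonal projectors and $(A_k^*\Amnsd)^* = \Amnd A_k$. You have merely spelled out the details the paper leaves implicit; no further changes are needed.
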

\begin{proof}
Applying $P_k$ to \eqref{undec2b}
and using the fact that orthogonal projectors are selfadjoint and 
$(A_k^* \Amnsd)^* = \Amnd A_k$ yields the result. 
\end{proof}

As another illustration of the usefulness of Proposition~\ref{propang}, we can 
prove a similar  characterization of the space condition \eqref{spacecond}
as in Theorem~\ref{thfunf}.

\begin{proposition}
We have that \eqref{spacecond} is satisfied if and only if 
\begin{equation}\label{xxx} \left(\lim_{n,m \to \infty}\inf_{z_{n,m} \in R(\Amns)}  \|x - z_{n,m}\| = 0 \right) \Longrightarrow x \in \overline{R(A^*)} 
\end{equation}
\end{proposition}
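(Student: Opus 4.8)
The plan is to rewrite the inner quantity $\inf_{z\in R(\Amns)}\|x-z\|$ as an orthogonal distance and then evaluate its limit with the help of the nesting relations \eqref{nested1}--\eqref{spxx}. Since $\Amn$ has finite-dimensional range, so does $\Amns$, and by \eqref{dual} the space $R(\Amns)=N(\Amn)^\bot$ is closed; hence for every $x\in X$
\[ \inf_{z\in R(\Amns)}\|x-z\| \;=\; \big\|\Pi_{N(\Amn)}x\big\| . \]
I would then feed in the description \eqref{onen} of Lemma~\ref{lemmaone}, which writes $N(\Amn)$ as the sum of $N(Q_mA)\cap X_n\subset X_n$ and $X_n^\bot$. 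These two summands are orthogonal, so the projector splits as $\Pi_{N(\Amn)}=\Pi_{N(Q_mA)\cap X_n}+(I-P_n)$ with orthogonal ranges, and Pythagoras gives
\[ \inf_{z\in R(\Amns)}\|x-z\|^2 \;=\; \big\|\Pi_{N(Q_mA)\cap X_n}x\big\|^2+\big\|(I-P_n)x\big\|^2 . \]

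Next I would pass to the limit. For fixed $n$, by \eqref{nested2} the subspaces $N(Q_mA)\cap X_n$ decrease in $m$ with intersection $N(A)\cap X_n$ by \eqref{spxx}, so the associated projectors converge strongly and $\|\Pi_{N(Q_mA)\cap X_n}x\|\to\|\Pi_{N(A)\cap X_n}x\|$ as $m\to\infty$. Letting now $n\to\infty$, the spaces $N(A)\cap X_n$ increase to $W:=\overline{\bigcup_n(N(A)\cap X_n)}$, whence $\|\Pi_{N(A)\cap X_n}x\|\to\|\Pi_W x\|$; at the same time $\|(I-P_n)x\|\to0$ by \eqref{projdef}. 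Combining, the left-hand side of \eqref{xxx} holds exactly when $\Pi_W x=0$, i.e. when $x\in W^\bot$.

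It then only remains to translate the implication in \eqref{xxx}. Recalling that $\overline{R(A^*)}=N(A)^\bot$, condition \eqref{xxx} states $W^\bot\subseteq N(A)^\bot$, which upon taking orthogonal complements is equivalent to $N(A)\subseteq W$. Since $N(A)\cap X_n\subseteq N(A)$ for all $n$ forces the reverse inclusion $W\subseteq N(A)$ automatically, \eqref{xxx} holds if and only if $W=N(A)$, which is precisely the space condition \eqref{spacecond}.

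The step I expect to be delicate is the passage to the limit in the Pythagorean identity, because $\|\Pi_{N(Q_mA)\cap X_n}x\|$ is monotone in opposite senses in the two indices: increasing in $n$ by \eqref{nested1} and decreasing in $m$ by \eqref{nested2}. The computation above sends $m\to\infty$ \emph{first}, so that $N(Q_mA)\cap X_n$ collapses onto $N(A)\cap X_n$ before the increasing limit in $n$ is taken; this is what produces the space $W$ appearing in \eqref{spacecond}. One must check that this is the intended reading of $\lim_{n,m\to\infty}$, since a simultaneous product-order limit would instead yield $\bigcap_m\overline{\bigcup_n(N(Q_mA)\cap X_n)}$, a space that can be strictly larger than $W$ and would invalidate the equivalence. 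Pinning down this order of limits is the real content; the remaining ingredients---strong convergence of projectors onto monotone families of closed subspaces together with the duality \eqref{dual}---are routine.
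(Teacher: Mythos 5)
Your argument is correct, and it takes a genuinely different --- and in fact more transparent --- route than the paper's. The paper proves both implications by contradiction, pushing the decomposition \eqref{undec2b} and Corollary~\ref{cor10} through the oblique-projection machinery of Proposition~\ref{propang}; you instead compute the distance in closed form: since $R(\Amns)=N(\Amn)^\bot$ is closed and \eqref{onen} gives the orthogonal splitting $N(\Amn)=(N(Q_mA)\cap X_n)\oplus X_n^\bot$, Pythagoras yields $\inf_{z\in R(\Amns)}\|x-z\|^2=\|\Pi_{N(Q_mA)\cap X_n}x\|^2+\|(I-P_n)x\|^2$, after which the proposition reduces to the elementary equivalence $W^\bot\subseteq N(A)^\bot \Leftrightarrow N(A)\subseteq W$. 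What this buys is an exact identification of the set on the left of \eqref{xxx} as $W^\bot$, which makes it visible at a glance why the inclusion \eqref{malmol} is an equality precisely under \eqref{spacecond}. What it costs is the order-of-limits issue you flag --- but on that point you are no worse off than the paper. Under a strict product-order reading of $\lim_{n,m}$, the left side of \eqref{xxx} characterizes $\bigl(\bigcap_m\overline{\bigcup_n(N(Q_mA)\cap X_n)}\bigr)^\bot$ rather than $W^\bot$ (one is forced to send $n\to\infty$ before $m$), and the paper's own proof of the direction \eqref{xxx}$\,\Rightarrow\,$\eqref{spacecond} has exactly the same defect: it fixes $n$ first and then picks $m$ depending on $n$, so it too only verifies the iterated limit $\lim_n\lim_m$ (equivalently, a subsequential limit) before invoking the hypothesis of \eqref{xxx}. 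The reading you need --- iterated, or a $\liminf$ along sequences with both indices tending to infinity --- is therefore precisely the one the paper implicitly adopts, and under that reading your proof is complete; the other direction of the equivalence is insensitive to the reading in both proofs.
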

\begin{proof} 
Let \eqref{xxx} hold and suppose that \eqref{spacecond} does not hold. Then there exists a 
$x \not = 0$ and 
$x \in N(A)$ and $x \in \bigcap_n \left( N(A) \cap X_n\right)^\bot.$
For such a $x$ using \eqref{undec2b}, \eqref{subsub}, and \eqref{spxx}, it follows that 
for all $n$ $\lim_{m\to \infty} \overline{w}_{n,m} = 0$. Thus, 
by Corollary~\ref{cor10} with $k = n$, 
$$\lim_{m\to \infty} \|(P_n - \Ans \Amnsd)x\| = 
\lim_{m\to \infty} \|(P_n - \Amnd \Amn)x \| = 0.$$ 
Taking $n$ such that $\|x - P_n x\| \leq \epsilon,$ we find a $m$ such that 
$$\|x - \Amnd \Amn x \| \leq  \|x - P_n x\| + \|(P_n - \Amnd \Amn)x \| \leq 2 \epsilon. $$ 
Thus by \eqref{xxx}, since  $\Amnd \Amn \in R(\Amns),$ it follows that $x \in \overline{R(A^*)} = N(A)^\bot$.
Since $x \in N(A),$ we have a contradiction, thus \eqref{spacecond} must hold. 
Conversely, if \eqref{spacecond} holds, suppose that  \eqref{xxx} does not hold. 
Then we have a $x \in N(A)$ and $z_{n,m} \in R(\Amns)$ with $\|x - \Pi_{R(\Amns)} x\| \to_{n,m} 0.$
As  $ \Pi_{R(\Amns)} =\Amnd \Amn = \Amns \Amnsd,$ we have that $x -  \Amns \Amnsd \to_{n,m} 0$.
Applying $P_n$ to  \eqref{undec2b}, it follows that 
$\overline{w}_{n,m}  = P_n x - \Amns \Amnsd x \to_{n,m} 0$; in particular 
$\lim_{n} \lim_{m} \overline{w}_{n,m} = 
\lim_{n} \Pi_{N(A) \cap X_n} x = 0.$  By \eqref{nested1}, $\Pi_{N(A) \cap X_n} x$ is 
increasing, hence $\Pi_{N(A) \cap X_n} x = 0$ for all $n$. In other words, 
$x \in \bigcap_n (N(A) \cap X_n)^\bot.$ Using \eqref{spacecond} implies that $x \in N(A)^\bot,$
which is a contradiction to $x \in N(A)$. 
\end{proof}

In view of Lemma~\ref{ellem}, we always have  that 
\begin{equation}\label{malmol} N(A)^\bot \subset \left\{ x: \text{dist}(x, N(\Amn)^\bot) \to_{n,m} 0 \right\},
\end{equation}
but according to \eqref{xxx}, equality  holds only if the space condition \eqref{spacecond} holds. 
The corresponding result for the projected least-squares case using $\An$ (and more) 
has already  been proven in \cite{DuDu14}.

\section{Convergence results}\label{sec:three}
We now study necessary and sufficient conditions for local and global
convergence of $\xmn$ to $\xd$, thus extending  the known results of 
the Section~\ref{sec:rev}.

\subsection{Conditions for global convergence}
At first we consider convergence for all $\xd  \in N(A)^\bot.$
We reprove the statement of Theorem~\ref{kith} 
based on the following lemma.
\begin{lemma}\label{next}
For all $\xd \in N(A)^\bot$ and $\xmn =  \And A \xd,$ we have 
\begin{align*}
\limsup_{n,m \to \infty} \|\xmn -\xd\| &\leq  
\limsup_{n,m \to \infty} \|\Amnd  A (I-P_n) \xd\| 
\end{align*}
\end{lemma}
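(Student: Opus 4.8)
The plan is to reduce the statement to two ingredients: the fact that the oblique projection $\Amnd A$ reproduces elements of $N(\Amn)^\bot$, and the approximation result of Lemma~\ref{ellem}. Throughout I read $\xmn = \Amnd A \xd$ as in \eqref{defxmn} (the $\And$ in the statement being a misprint for $\Amnd$).

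First I would split the exact solution as $\xd = P_n \xd + (I-P_n)\xd$ and use linearity of $\Amnd A$ to write
\[ \xmn = \Amnd A \xd = \Amnd A P_n \xd + \Amnd A (I-P_n)\xd . \]
The central step is to identify the first term. Using the identity $\Amnd = P_n \Amnd Q_m$ recorded after \eqref{defxmn}, together with $\Amn = Q_m A P_n$ and the standard pseudoinverse relation $\Amnd \Amn = \Pi_{N(\Amn)^\bot}$ (valid since $\Amn$ has closed range), I compute
\[ \Amnd A P_n = P_n \Amnd Q_m A P_n = P_n \Amnd \Amn = P_n \Pi_{N(\Amn)^\bot} = \Pi_{N(\Amn)^\bot}, \]
where the last equality uses $N(\Amn)^\bot \subset X_n$ from Lemma~\ref{lemmaone}, so that $P_n$ acts as the identity on the range of $\Pi_{N(\Amn)^\bot}$. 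Substituting $\Amnd A P_n \xd = \Pi_{N(\Amn)^\bot}\xd$ into the previous display yields
\[ \xmn - \xd = \Amnd A(I-P_n)\xd - \bigl(I - \Pi_{N(\Amn)^\bot}\bigr)\xd = \Amnd A(I-P_n)\xd - \Pi_{N(\Amn)}\xd . \]

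From here the triangle inequality gives $\|\xmn - \xd\| \le \|\Amnd A(I-P_n)\xd\| + \|\Pi_{N(\Amn)}\xd\|$. The second summand is exactly the distance of $\xd$ to $N(\Amn)^\bot$, since $\|\Pi_{N(\Amn)}\xd\| = \|\xd - \Pi_{N(\Amn)^\bot}\xd\| = \mathrm{dist}(\xd, N(\Amn)^\bot)$; because $\xd \in N(A)^\bot$, Lemma~\ref{ellem} forces this quantity to tend to $0$ as $n,m \to \infty$. Taking $\limsup_{n,m\to\infty}$ and using that the $\limsup$ of a sum is bounded by the sum of the $\limsup$s (the error term vanishing in the limit) then delivers the claimed bound.

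I expect the main obstacle to be the algebraic identity $\Amnd A P_n = \Pi_{N(\Amn)^\bot}$: one must be careful that it is $\Amnd A P_n$ and not $\Amnd A$ that appears, so that the factor $Q_m$ can be reinserted through $\Amn = Q_m A P_n$, and one must invoke $N(\Amn)^\bot \subset X_n$ to discard the leading $P_n$. The only other point requiring care is recognizing the orthogonal-projection norm $\|\Pi_{N(\Amn)}\xd\|$ as precisely the distance functional controlled by Lemma~\ref{ellem}, which is what makes the error term vanish in the limit for every $\xd \in N(A)^\bot$, without any further assumption such as the space condition \eqref{spacecond}.
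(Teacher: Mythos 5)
Your proof is correct and follows essentially the same route as the paper: both split off $\Amnd A(I-P_n)\xd$, identify $\Amnd A P_n\xd = \Amnd\Amn\xd = \Pi_{N(\Amn)^\bot}\xd$ so that the remaining term is the distance of $\xd$ to $N(\Amn)^\bot = R(\Amns)$, and then invoke Lemma~\ref{ellem} to make that distance vanish in the limit. Your reading of $\And$ as a misprint for $\Amnd$ is also consistent with the paper's own proof.
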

\begin{proof}Noting 
that $\Amnd \Amn = I - \Pi_{N(\Amn)} = \Pi_{N(\Amn)^\bot} = \Pi_{R(\Amns)},$
\begin{align*} &\xmn -\xd = \Amnd A \xd - \xd = 
 (\Amnd Q_m A P_n - I) \xd + \Amnd Q_m A (I-P_n) \xd \\
 & \qquad = 
 -(I - \Pi_{R({\Amns})})\xd + \Amnd  A(I-P_n) \xd. 
\end{align*}
Thus we have
\begin{align*} \|\xmn -\xd\| &\leq  
 \inf_{z \in R({\Amns})}\|\xd - z\|  + 
\|\Amnd  A (I-P_n) \xd\| 
\end{align*}
Now, Lemma~\ref{ellem} and \eqref{app} yields the result 
\end{proof}

\begin{remark} In the above result  we can easily replace 
$\Amnd  A (I-P_n) \xd$ by the expression
\mbox{$\Amnd Q_m A \Pi_{N(Q_m A)^\bot} (I-P_n) \xd$} or 
by $\Amnd Q_m A \Pi_{N(Q_m A)^\bot \cap X_n^\bot} \xd$. 
\end{remark}

We obtain  the first (well-known) result on global convergence; cf. Theorem~\ref{kith}. 
\begin{theorem}\label{th15}
The approximations $\xmn$ converge
to $\xd$ for all $\xd \in N(A)^\bot$ if and only if 
there exists a constant $C$ such that 
\begin{equation}\label{ubc}  \sup_{n,m} \|\Amnd A\| \leq C.  \end{equation}
Equivalent to \eqref{ubc} is that there exists a constant $C'$ such that
\begin{equation}\label{ubc2}  \sup_{n,m} \|\Amnd A (I-P_n)\| \leq C.  \end{equation}
\end{theorem}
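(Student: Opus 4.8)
The plan is to split the statement into three independent pieces: the purely algebraic equivalence of \eqref{ubc} and \eqref{ubc2}, the sufficiency of \eqref{ubc} for global convergence, and its necessity. I would dispose of the equivalence of the two uniform bounds first. Since $\|I-P_n\|\le 1$, the bound \eqref{ubc} gives $\|\Amnd A(I-P_n)\| \le \|\Amnd A\|\,\|I-P_n\| \le C$, so \eqref{ubc} implies \eqref{ubc2}. For the converse I would write $\Amnd A = \Amnd A P_n + \Amnd A(I-P_n)$ and note that, because $\Amnd = P_n\Amnd Q_m$ forces $\Amnd Q_m = \Amnd$, the first summand equals $\Amnd A P_n = \Amnd Q_m A P_n = \Amnd\Amn = \Pi_{N(\Amn)^\bot} = \Pi_{R(\Amns)}$, an orthogonal projector of norm at most one. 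Hence $\|\Amnd A\| \le 1 + \|\Amnd A(I-P_n)\|$, which turns \eqref{ubc2} into \eqref{ubc} with constant $1+C'$.

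For sufficiency I would simply feed \eqref{ubc} into Lemma~\ref{next}. That lemma bounds $\limsup_{n,m}\|\xmn-\xd\|$ by $\limsup_{n,m}\|\Amnd A(I-P_n)\xd\|$, and the right-hand side is at most $\|\Amnd A\|\,\|(I-P_n)\xd\| \le C\,\|(I-P_n)\xd\|$. Because \eqref{projdef} makes $P_n$ converge strongly to the identity, $\|(I-P_n)\xd\|\to 0$, and global convergence $\xmn\to\xd$ for every $\xd\in N(A)^\bot$ follows.

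Necessity is the substantive direction, and I would obtain it from the uniform boundedness principle applied to the family $\{\Amnd A\}_{n,m}$ viewed as operators on the Hilbert space $N(A)^\bot$. Since $A\,\Pi_{N(A)}=0$, we have $\Amnd A = \Amnd A\,\Pi_{N(A)^\bot}$, so the operator norm of $\Amnd A$ over all of $X$ equals that of its restriction to $N(A)^\bot$; it therefore suffices to bound the restrictions uniformly. Global convergence says exactly that $\xmn = \Amnd A\xd$ converges for each fixed $\xd\in N(A)^\bot$, so Banach--Steinhaus would yield \eqref{ubc} at once --- provided the family is pointwise bounded. This is the one genuinely delicate point: a family indexed by the directed set $\N\times\N$ that has a limit as $n,m\to\infty$ need not be bounded over all indices, because the limit controls only the terms with both indices large and says nothing about the ``boundary'' terms in which one index stays small.

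I would close this gap by establishing the two marginal limits. For $n$ fixed, $\Amn = Q_m A P_n$ converges in operator norm to $AP_n$ as $m\to\infty$ (the operator $AP_n$ is finite rank and $Q_m\to I$ strongly), and its rank is eventually constant; continuity of the pseudoinverse on a set of fixed rank then gives $\Amnd\to\And$, so $\sup_m\|\xmn\|<\infty$ and $\xmn\to\xn$. Symmetrically, for $m$ fixed, $Q_m A$ is finite rank, and $\|Q_m A(I-P_n)\| = \|(I-P_n)(Q_m A)^*\|\to 0$ since $(Q_m A)^*$ is compact and $I-P_n\to 0$ strongly; hence $\Amn\to Q_m A$ in norm with eventually constant rank, giving $\sup_n\|\xmn\|<\infty$. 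Any index sequence along which $\|\xmn\|\to\infty$ would either have $\min(n,m)\to\infty$, contradicting the double limit, or (after passing to a subsequence) a fixed value of one index, contradicting the corresponding marginal bound. Thus $\sup_{n,m}\|\Amnd A\xd\|<\infty$ for each $\xd$, the hypothesis of Banach--Steinhaus is met, and \eqref{ubc} follows.
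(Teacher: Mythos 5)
Your proof is correct and follows the same overall architecture as the paper's: sufficiency via Lemma~\ref{next} combined with $\|(I-P_n)\xd\|\to 0$, necessity via the uniform boundedness principle applied on $N(A)^\bot$, and the equivalence of \eqref{ubc} and \eqref{ubc2} via the identity $\Amnd A P_n=\Amnd\Amn=\Pi_{N(\Amn)^\bot}$, which has norm at most one. The one place where you genuinely go beyond the paper is in the necessity direction. The paper simply asserts that pointwise convergence $\Amnd A\xd\to\xd$ on $N(A)^\bot$ yields uniform boundedness by Banach--Steinhaus, leaving implicit the fact that a doubly indexed family converging as $n,m\to\infty$ must be pointwise bounded over \emph{all} pairs $(n,m)$, including those where one index stays small and which the double limit does not control. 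Your marginal-limit argument --- for fixed $n$, $\Amn\to\An$ in operator norm with eventually constant rank (by lower semicontinuity of rank under norm convergence, since ${\rm rank}(\Amn)\le{\rm rank}(\An)$), hence $\Amnd\to\And$ and $\sup_m\|\Amnd\|<\infty$, and symmetrically for fixed $m$ --- closes this gap cleanly, and the concluding subsequence dichotomy is sound. Everything else matches the paper's proof essentially step for step, so the net effect of your version is the same theorem with a more careful verification of the hypothesis of the uniform boundedness principle.
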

\begin{proof}
Let $\xmn \to \xd$ for all $\xd \in N(A)^\bot$ as $m,n \to \infty$. Then we have
by $\xmn = \Amnd A \xd$ that
$\Amnd A \to  I$ pointwise on $N(A)^\bot.$
By the uniform boundedness principle this implies that
$\Amnd A|_{N(A)^\bot}$ must be uniformly bounded. 
But it is easy to see that this is equivalent to \eqref{ubc}. 

Conversely let \eqref{ubc} hold, then 
\[  \|\Amnd A (I-P_n) \xd\| \leq 
 \|\Amnd A\|\| (I-P_n) \xd\|, \]
and by \eqref{ubc} and \eqref{projdef}, the first result follows.
Since $ \And A (I-P_n) =  \Amnd A - \Amnd \Amn,$  and $\Amnd \Amn$ is 
always bounded,  \eqref{ubc2} follows. 
\end{proof}

Next, we  study condition~\eqref{ubc} in depth and rewrite it in other forms. 
Clearly it holds that
\begin{equation}\label{aaaa} \sup_{m,n} \|\Amnd A\| \leq C  \Leftrightarrow
 \sup_{m,n} \|A^*\Amnsd\| \leq C. 
\end{equation}
We show that \eqref{ubc} is equivalent to the fact that 
 angles between certain subspaces are uniformly bounded. 
More precisely, we consider the norm of the product of orthogonal projectors
onto two subspaces, which is related to the (minimal canonical) angle; 
cf.  \cite[Lemma~5.1]{Sz06}.

\begin{lemma}\label{lemang}
For a sequence of  closed subspaces $X_n,Y_n,$ 
let $\Pi_{X_n} ,\Pi_{Y_n} $ be the corresponding 
orthogonal projectors.

Then we have the following equivalent conditions 
\begin{align} &\exists \rho <1:\,   \sup_n \|\Pi_{X_n} \Pi_{Y_n}\| \leq \rho,  \label{firsta} \\ 
\Leftrightarrow &
\exists \tau > 0\, \forall x \in X_n, y \in Y_n: \, \|(x+y)\|^2 \geq \tau \|x\|^2 , 
\label{secaa}
\\
\Leftrightarrow  &
\exists \tau '> 0\, \forall x \in X_n, y \in Y_n:\,  \|(x+y)\|^2 \geq \tau \|y\|^2  .
\label{thirda}
\end{align}
\end{lemma}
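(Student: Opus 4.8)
The plan is to reduce all three statements to a single scalar inequality linking the inner product $\langle x,y\rangle$ to the projector-product norm, and then to read off the equivalences by completing the square. Set $c:=\sup_n\|\Pi_{X_n}\Pi_{Y_n}\|$. The starting point is the estimate that for every $n$ and all $x\in X_n$, $y\in Y_n$ one has $|\langle x,y\rangle|\le c\,\|x\|\,\|y\|$. I would obtain this by writing, using $x=\Pi_{X_n}x$, $y=\Pi_{Y_n}y$ and the self-adjointness of the orthogonal projectors, $\langle x,y\rangle=\langle \Pi_{X_n}x,\Pi_{Y_n}y\rangle=\langle x,\Pi_{X_n}\Pi_{Y_n}y\rangle$, so that Cauchy--Schwarz gives $|\langle x,y\rangle|\le \|x\|\,\|\Pi_{X_n}\Pi_{Y_n}y\|\le c\,\|x\|\,\|y\|$.

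For the implications $\eqref{firsta}\Rightarrow\eqref{secaa}$ and $\eqref{firsta}\Rightarrow\eqref{thirda}$, I would assume $c\le\rho<1$. Expanding the square and inserting the above estimate yields
\[
\|x+y\|^2=\|x\|^2+2\langle x,y\rangle+\|y\|^2\ \ge\ \|x\|^2-2\rho\,\|x\|\,\|y\|+\|y\|^2 .
\]
Completing the square in the two possible ways, $\|x\|^2-2\rho\|x\|\|y\|+\|y\|^2=(1-\rho^2)\|x\|^2+(\|y\|-\rho\|x\|)^2$ and likewise $=(1-\rho^2)\|y\|^2+(\|x\|-\rho\|y\|)^2$, the remainder terms are nonnegative, so $\|x+y\|^2\ge(1-\rho^2)\|x\|^2$ and $\|x+y\|^2\ge(1-\rho^2)\|y\|^2$. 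Hence $\eqref{secaa}$ and $\eqref{thirda}$ hold with $\tau=\tau'=1-\rho^2>0$, and this constant is independent of $n$, which is exactly the required uniformity.

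For the converse $\eqref{secaa}\Rightarrow\eqref{firsta}$ the key is a good test element. Given $y\in Y_n$, I would choose $x:=-\Pi_{X_n}y\in X_n$, so that $x+y=(I-\Pi_{X_n})y$ is orthogonal to $X_n$ and $\|x+y\|^2=\|y\|^2-\|\Pi_{X_n}y\|^2$, while $\|x\|=\|\Pi_{X_n}y\|$. Substituting into $\eqref{secaa}$ gives $\|y\|^2-\|\Pi_{X_n}y\|^2\ge\tau\|\Pi_{X_n}y\|^2$, that is $\|\Pi_{X_n}y\|\le(1+\tau)^{-1/2}\|y\|$ for all $y\in Y_n$. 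Since for arbitrary $z$ the vector $\Pi_{Y_n}z$ lies in $Y_n$ with $\|\Pi_{Y_n}z\|\le\|z\|$, this shows $\|\Pi_{X_n}\Pi_{Y_n}z\|\le(1+\tau)^{-1/2}\|z\|$, hence $c\le(1+\tau)^{-1/2}=:\rho<1$ uniformly in $n$, which is $\eqref{firsta}$. The remaining implication $\eqref{thirda}\Rightarrow\eqref{firsta}$ follows by the symmetric choice $y:=-\Pi_{Y_n}x$ for $x\in X_n$, which bounds $\|\Pi_{Y_n}\Pi_{X_n}\|$, together with the identity $\|\Pi_{X_n}\Pi_{Y_n}\|=\|(\Pi_{X_n}\Pi_{Y_n})^*\|=\|\Pi_{Y_n}\Pi_{X_n}\|$.

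The computations themselves are routine; the part that needs care is the converse direction. The right test vectors ($x=-\Pi_{X_n}y$, respectively $y=-\Pi_{Y_n}x$) are what turn the bilinear lower bounds $\eqref{secaa}$/$\eqref{thirda}$ back into a norm bound on the projector product, and one must check that the resulting $\rho$ is both strictly below $1$ and uniform in $n$ --- which is automatic here because $\tau$ is a single positive constant valid for all $n$. The adjoint-norm identity is what allows the two one-sided conditions $\eqref{secaa}$ and $\eqref{thirda}$ to both feed back into the symmetric condition $\eqref{firsta}$.
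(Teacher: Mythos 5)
Your proof is correct, and the converse direction takes a genuinely different (and arguably cleaner) route than the paper. The forward implication is essentially the paper's argument in different clothing: the paper bounds $2\rho\|x\|\|y\|$ via Young's inequality with a parameter $\epsilon$ and then optimizes $\epsilon$, which is the same computation as your completing the square; both yield the constant $1-\rho^2$. For the converse, the paper normalizes $\|x\|=\|y\|=1$, tests \eqref{secaa} on the rescaled vector $\epsilon x+\epsilon^{-1}y$, optimizes over $\epsilon$ to get a bound $(x,y)\le\sqrt{1-\tau}<1$ on the inner product of unit vectors, and then invokes the cited identity $\|\Pi_X\Pi_Y\|=\sup_{\|x\|,\|y\|\le 1}(x,y)$ to conclude. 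You instead test \eqref{secaa} on the specific pair $x=-\Pi_{X_n}y$, use Pythagoras to turn the lower bound into $\|\Pi_{X_n}y\|\le(1+\tau)^{-1/2}\|y\|$ for $y\in Y_n$, and read off the operator norm bound directly; the symmetric choice plus $\|\Pi_{X_n}\Pi_{Y_n}\|=\|\Pi_{Y_n}\Pi_{X_n}\|$ handles \eqref{thirda}. Your route avoids the external sup-formula for the projector product entirely, produces an explicit constant $\rho=(1+\tau)^{-1/2}$, and treats the two asymmetric conditions separately and symmetrically, whereas the paper only writes out one of them; it also sidesteps the slightly garbled $\tau$-versus-$\rho$ bookkeeping in the paper's optimization step. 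Both arguments are uniform in $n$ because the constants never depend on $n$.
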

\begin{proof} 
If \eqref{firsta} holds, then 
by Young's inequality for any $\epsilon >0$ $x \in X_n,$ $y \in Y_n,$
\[ \|(x+y)\|^2 \geq  \|x\|^2 + \|y\|^2 - 2 \rho \|x\|\|y\|
 \geq \|x\|^2(1 - \rho \epsilon) + \|y\|^2 (1 -\tfrac{\rho}{\epsilon} ).
\]
With $\epsilon = \rho$ or $\epsilon = \rho^{-1}$ either 
of \eqref{thirda} or \eqref{secaa}  follows.
Conversely let \eqref{secaa} hold, then 
 then 
for any $x \in X_n$, $y \in y_n$ with $\|x\| = \|y\| = 1$ 
and any $\epsilon>0$ 
\[
\epsilon^2 \tau^2 \leq \|\epsilon x + \frac{1}{\epsilon} y\|^2  = 
 \epsilon^2  + \frac{1}{\epsilon^2}
- 2 (x,y).
\]  
Taking $\epsilon^2 = (1-\tau^2)^{-\frac{1}{2}}$ gives the bound 
\[ 2(x,y)  \leq 2 (1-\rho^2)^\frac{1}{2} < 2, \]
thus with 
(cf. \cite[Lemma~5.1]{Sz06}) 
\[  \left\|\Pi_X \Pi_Y \right\| = \sup_{\|x\|\leq 1 , \|y\| \leq 1, x \in X, y \in Y}
(x,y), \]
the result follows. 
\end{proof}
 
Combining Proposition~\ref{propang} and Lemma~\ref{lemang} yields the following 

\begin{theorem}\label{lemmafive}
The uniform boundedness condition \eqref{ubc} is 
equivalent to one of the following (and hence all) conditions:
\begin{align}
\exists \eta <1: \forall n: \qquad \|\Pi_{N(\Upmns)} \Pi_{R(\Upmns)}\| &< \eta,&    \label{cond} \\
\exists \eta <1: \forall n: \qquad \|\Pi_{N(\Upmn)} \Pi_{R(\Upmn)} \|  &< \eta,& \label{condadjp} \\
\exists \eta <1: \forall n: \qquad \|(I-P_n)  \Pi_{R(\Upmn)} \| &< \eta.& \label{condadj}  
\end{align}
\end{theorem}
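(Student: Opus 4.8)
The plan is to read off the oblique‑projection structure supplied by Proposition~\ref{propang} and convert the operator‑norm bound \eqref{ubc} into a uniform angle condition through Lemma~\ref{lemang}, applied throughout to the doubly‑indexed families of subspaces. First I would settle \eqref{ubc}$\Leftrightarrow$\eqref{cond}. By the decomposition \eqref{undec1}, $\Amnd A$ is exactly the (generally oblique) projection that sends $x = v_{n,m}+u_{n,m}$ to $v_{n,m}$, with $v_{n,m}\in R(\Upmns)$ and $u_{n,m}\in N(\Upmns)$; since $X=R(\Upmns)\oplus N(\Upmns)$ is a (nonorthogonal) direct sum, the pair $(v_{n,m},u_{n,m})$ ranges freely over $R(\Upmns)\times N(\Upmns)$ as $x$ runs over $X$. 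Hence
\[ \|\Amnd A\| = \sup_{v \in R(\Upmns),\, u \in N(\Upmns),\, v+u\neq 0} \frac{\|v\|}{\|v+u\|}, \]
so that $\sup_{n,m}\|\Amnd A\|\le C$ is equivalent to $\|v+u\|^2\ge C^{-2}\|v\|^2$ for all such $v,u$ and all $n,m$. This is precisely condition \eqref{secaa} of Lemma~\ref{lemang} for $R(\Upmns)$ and $N(\Upmns)$, hence equivalent to $\sup_{n,m}\|\Pi_{R(\Upmns)}\Pi_{N(\Upmns)}\|<1$. Invoking the elementary identity $\|\Pi_M\Pi_N\|=\|\Pi_N\Pi_M\|$ (orthogonal projectors are self‑adjoint) rewrites this as \eqref{cond}.

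For \eqref{ubc}$\Leftrightarrow$\eqref{condadjp} I would rerun this on the adjoint side. By \eqref{aaaa}, \eqref{ubc} is equivalent to $\sup_{n,m}\|A^*\Amnsd\|\le C$, and by \eqref{undec2a} the operator $A^*\Amnsd$ is the oblique projection of $x=\overline{v}_{n,m}+\overline{u}_{n,m}$ onto $\overline{v}_{n,m}\in R(\Upmn)$ along $N(\Upmn)$. The identical norm computation together with Lemma~\ref{lemang} applied to $R(\Upmn)$ and $N(\Upmn)$ then produces \eqref{condadjp}.

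The remaining equivalence \eqref{condadjp}$\Leftrightarrow$\eqref{condadj} is the delicate one, and I would phrase both sides through Lemma~\ref{lemang}: condition \eqref{condadjp} is equivalent to $\|v+u\|^2\ge\tau\|v\|^2$ for all $v\in R(\Upmn)$, $u\in N(\Upmn)$, while \eqref{condadj}, read as the angle between $R(\Upmn)$ and $X_n^\bot=R(I-P_n)$, is equivalent to $\|v+q\|^2\ge\tau'\|v\|^2$ for all $v\in R(\Upmn)$, $q\in X_n^\bot$. The forward direction is immediate, because $\Upmn=A^*Q_mAP_n$ annihilates $X_n^\bot$, so $X_n^\bot\subset N(\Upmn)$ and restricting the $N(\Upmn)$‑inequality to $q\in X_n^\bot$ gives \eqref{condadj}. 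The converse is the crux: writing $u\in N(\Upmn)$ through \eqref{onen} as $u=w+q$ with $w\in N(Q_mA)\cap X_n$ and $q\in X_n^\bot$ orthogonal, the essential observation is that every $v\in R(\Upmn)$ is orthogonal to $w$. Indeed $v=A^*Q_mz$ for some $z$, whence $(v,w)=(A^*Q_mz,w)=(z,Q_mAw)=0$ since $Q_mAw=0$; together with $q\perp w$ this yields the Pythagorean collapse $\|v+u\|^2=\|v+q\|^2+\|w\|^2\ge\|v+q\|^2\ge\tau'\|v\|^2$, recovering \eqref{condadjp}. The step I expect to be the main obstacle is exactly this orthogonality $R(\Upmn)\perp\bigl(N(Q_mA)\cap X_n\bigr)$: it is what reduces the three‑term decomposition \eqref{undec2b} to a clean two‑term estimate and explains why the weaker‑looking gap condition \eqref{condadj} already forces full uniform boundedness.
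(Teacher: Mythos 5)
Your proof is correct and follows essentially the same route as the paper: Proposition~\ref{propang} identifies $\Amnd A$ and $A^*\Amnsd$ as oblique projections, Lemma~\ref{lemang} converts the uniform norm bounds into \eqref{cond} and (via \eqref{aaaa}) \eqref{condadjp}, and the three-term decomposition \eqref{undec2b} with the orthogonality $R(\Upmn)\perp\bigl(N(Q_mA)\cap X_n\bigr)$ handles \eqref{condadj}. The only (immaterial) difference is in that last equivalence, where you invoke the $\|v\|^2$-form \eqref{secaa} and a direct Pythagorean estimate, while the paper uses the $\|y\|^2$-form \eqref{thirda} and bounds $\|\overline{q}_{n,m}\|$ after absorbing the uniformly bounded $\overline{w}_{n,m}$; both hinge on exactly the same orthogonality facts.
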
 
\begin{proof}
The boundedness condition can be rephrased as the condition that 
a constant $C$ exists with (using the notation in Proposition~\ref{propang})
\[ \|v_{n,m}\| \leq C \|x\| = C \| v_{n,m} + u_{n,m} \|. \] 
for all $v_{n,m} \in R(\Upmns)  $ and $u_{n,m} \in N(\Upmns)$. 
However, this is \eqref{secaa} with 
the spaces $N(\Upmns)$  and $R(\Upmns)$, thus Lemma~\ref{lemang} gives  \eqref{cond}. 
By \eqref{aaaa} we have the equivalent characterization of uniform boundedness 
using \eqref{secaa} that a constant exists, such that 
\[ \|\overline{v}_{n,m}\| \leq C \|x\| = C \| \overline{v}_{n,m} + \overline{u}_{n,m} \|, \] 
which yields \eqref{condadjp}. 
By \eqref{thirda}, this is equivalent to 
the existence of a constant such that 
\begin{equation}\label{help1} \|  \overline{w}_{n,m} + \overline{q}_{n,m} \| \leq C \|x\|, 
\end{equation}
with $\overline{w}_{n,m}$, $\overline{q}_{n,m}$ as in  Proposition~\ref{propang}. 
However, $\overline{w}_{n,m}$  is always uniformly bounded for bounded $x$
by \eqref{subsub}, and it is orthogonal to $q_{n,m}$. 
Thus,  this condition is satisfied if and only if 
$$\|\overline{q}_{n,m} \|^2 \leq C' \|x\|^2 = 
C'(\|\overline{v}_{n,m}+ \overline{q}_{n,m} + \overline{w}_{n,m}\|^2) = 
C'(\|\overline{v}_{n,m}+ \overline{q}_{n,m}\|^2 +\|\overline{w}_{n,m}\|^2).$$ 
Since we can take $x =  \overline{v}_{n,m}+ \overline{q}_{n,m} + \overline{w}_{n,m}$ with 
arbitrary chosen elements $\overline{v}_{n,m},\overline{q}_{n,m},\overline{w}_{n,m}$ out of the
corresponding spaces, we have that \eqref{help1} holds 
if and only if for all $\overline{v}_{n,m} \in R(\Upmn)$ 
and all $\overline{q}_{n,m} \in X_{n}^\bot$ 
\[ \|q_{n,m} \| \leq  C'(\|\overline{v}_{n,m}+ \overline{q}_{n,m}\|), \]
which is equivalent to  \eqref{condadj}. 
\end{proof} 

\begin{remark}
We remark that for closed subspaces $X$, $Y$, in Hilbert spaces 
 the identity 
$\|\Pi_X \Pi_Y\| <1 \Leftrightarrow \|\Pi_{X^\bot} \Pi_{Y^\bot}\| <1$  
usually does \em{not} hold \cite{CoMa10,De95}.
\end{remark}

These conditions can be rewritten in more convenient form. 
\begin{theorem}\label{lemmasix}
The uniform boundedness condition \eqref{ubc} is 
is equivalent to one (and hence all) of the following conditions: 
\begin{align}
  &\exists C >0 : \forall n,m,x   &  \|(I-P_n)A^*Q_m A P_n x\| \leq  C'  \|P_n A^*Q_m A P_n x \|, 
  \label{threeA} \\
&\exists \eta <1: \forall n,m,x  &
 \|(I-P_n) A^*Q_m A P_n x\| \leq  \eta \|A^*Q_m A P_n x \|,  
\label{oneA} \\
&\exists \eta <1: \forall n,m,w  &  
  \inf_{v} \| P_n A^*Q_m A w - A^*Q_m A P_n v\|
\leq \eta \| P_n A^*Q_m A w \|,  \label{fourA}  \\
 &\exists \eta <1: \forall n,m,v &   \inf_{w} \| P_n A^*Q_mA w - A^*Q_mA P_n v\|
\leq \eta \| A^*Q_m A P_n v \|. \label{fiveA} 
\end{align}
\end{theorem}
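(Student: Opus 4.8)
The plan is to reduce each of \eqref{threeA}--\eqref{fiveA} to one of the three projector-norm conditions \eqref{cond}, \eqref{condadjp}, \eqref{condadj}, which Theorem~\ref{lemmafive} already shows equivalent to \eqref{ubc}. The recurring observation is that, by finite-dimensionality of $X_n$, both operators $\Upmn = A^*Q_mAP_n$ and $\Upmns = P_nA^*Q_mA$ have finite-dimensional, hence closed, range, so the infima in \eqref{fourA} and \eqref{fiveA} are attained and equal to genuine distances to these ranges. Together with the adjoint relation $\Upmn^* = \Upmns$, this gives $R(\Upmn)^\bot = N(\Upmns)$ and $R(\Upmns)^\bot = N(\Upmn)$, which are the bridge to the projector conditions.

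First I would establish \eqref{oneA} $\Leftrightarrow$ \eqref{condadj}. For the forward direction one notes that $A^*Q_mAP_nx \in R(\Upmn)$ is fixed by $\Pi_{R(\Upmn)}$, so $\|(I-P_n)A^*Q_mAP_nx\| = \|(I-P_n)\Pi_{R(\Upmn)}(A^*Q_mAP_nx)\| \leq \eta\|A^*Q_mAP_nx\|$ by \eqref{condadj}. Conversely, since every element of $R(\Upmn)$ has the form $A^*Q_mAP_nx$, the pointwise bound \eqref{oneA} forces $\|(I-P_n)\Pi_{R(\Upmn)}\| \leq \eta < 1$. The equivalence \eqref{threeA} $\Leftrightarrow$ \eqref{oneA} is then a Pythagorean rearrangement: since $P_nA^*Q_mAP_nx$ and $(I-P_n)A^*Q_mAP_nx$ are orthogonal and sum to $A^*Q_mAP_nx$, bounding the $(I-P_n)$-part by $\eta<1$ times the full vector is equivalent to bounding it by a constant times the $P_n$-part, with $C' = \eta/\sqrt{1-\eta^2}$ in one direction and $\eta = C'/\sqrt{C'^2+1}<1$ in the other.

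Next I would treat \eqref{fourA} and \eqref{fiveA}. In \eqref{fourA} the term $A^*Q_mAP_nv = \Upmn v$ ranges over all of $R(\Upmn)$ as $v$ varies, so the infimum equals $\text{dist}(\Upmns w, R(\Upmn)) = \|(I-\Pi_{R(\Upmn)})\Upmns w\| = \|\Pi_{N(\Upmns)}\Upmns w\|$, using $R(\Upmn)^\bot = N(\Upmns)$. Because $\Upmns w = P_nA^*Q_mAw$ fills $R(\Upmns)$ and the right-hand side of \eqref{fourA} is exactly $\|\Upmns w\|$, condition \eqref{fourA} is precisely $\|\Pi_{N(\Upmns)}\Pi_{R(\Upmns)}\| \leq \eta$, i.e.\ \eqref{cond}. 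Condition \eqref{fiveA} is symmetric, now with the infimum over $w$: the term $P_nA^*Q_mAw$ fills $R(\Upmns)$, so the infimum equals $\text{dist}(\Upmn v, R(\Upmns)) = \|\Pi_{N(\Upmn)}\Upmn v\|$ via $R(\Upmns)^\bot = N(\Upmn)$, and since $\Upmn v$ fills $R(\Upmn)$ this is exactly \eqref{condadjp}.

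The Young/Pythagoras constant-chasing and the quantifier bookkeeping (matching ``for all $x$'' with ``for all vectors in the relevant range'') are routine. The step I expect to require the most care is the identification $R(\Upmn)^\bot = N(\Upmns)$ and $R(\Upmns)^\bot = N(\Upmn)$, together with the closedness of the ranges that turns the infima into attained distances; once $\Upmn^* = \Upmns$ and the finite-dimensionality of $X_n$ are invoked, each of \eqref{threeA}--\eqref{fiveA} collapses onto the corresponding line of Theorem~\ref{lemmafive}.
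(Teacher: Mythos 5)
Your proposal is correct and follows essentially the same route as the paper: identify \eqref{oneA} as a restatement of \eqref{condadj}, pass to \eqref{threeA} by the orthogonal $P_n$/$(I-P_n)$ splitting, and read \eqref{fourA}, \eqref{fiveA} as \eqref{cond} and \eqref{condadjp} via $R(\Upmn)^\bot = N(\Upmns)$, $R(\Upmns)^\bot = N(\Upmn)$ and the minimization property of orthogonal projections. The paper's proof is only a two-sentence sketch of exactly these steps; you have simply supplied the details (closedness of the finite-dimensional ranges, the explicit constants $C'=\eta/\sqrt{1-\eta^2}$) that it leaves implicit.
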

\begin{proof}
Condition \eqref{condadj} can be rewritten as  \eqref{oneA}. 
By splitting the terms 
using the complementary orthogonal projectors $P$ and $I-P$, 
it is easy to see that this is equivalent to 
\eqref{threeA}.
The identities \eqref{fourA} and \eqref{fiveA} are 
\eqref{cond} and \eqref{condadjp}, respectively, 
when writing the projectors onto the nullspaces as 
complementary projectors onto the ranges of the adjoints and 
using the minimization property of such orthogonal projectors. 
\end{proof}

It is not difficult to verify that \eqref{threeA} is equivalent to Vainikko and H\"amarik's condition~\eqref{VaHa}.
Note that a characterization over angles of subspaces has also been used 
by Du and Du \cite[Theorem 1.2]{DuDu14} for the case $Q_m = I$ and  with different spaces, which do not yield 
an equivalent condition  to \eqref{ubc0} but need additionally the space condition \eqref{spacecond}.

\subsubsection{Necessary and sufficient conditions for convergence}\label{necsuff}
In this section we investigate practically useful conditions such that 
\eqref{ubc} is satisfied and necessary conditions for \eqref{ubc}.

We  find a condition of Natterer's type that is equivalent to the uniform 
boundedness condition extending Natterer's result to the cases of 
 non-injective operators and  $Q_m\not = I$.
\begin{proposition}\label{prnat}
The uniform boundedness condition \eqref{ubc} and 
hence $\xmn \to \xd$ for all $\xd \in N(A)^\bot$ holds if and only 
if there exists a constant $C$ such that for all $\xd \in X$ there exists 
a $u_n \in X_n$ such that 
\begin{equation}\label{nc}\|\xd - u_n\| + \|\Amnd A (\xd - u_n)\| \leq C \|\xd\|. 
\end{equation}
In particular for the case $Q_m = I$, if Natterer's condition \eqref{natterer} holds for all 
$\xd \in N(A)^\bot$, then 
 \eqref{ubc} holds, thus $\xn \to \xd$ for all $\xd \in N(A)^\bot$.
\end{proposition}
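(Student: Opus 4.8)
The plan is to establish the stated equivalence \eqref{nc} $\Leftrightarrow$ \eqref{ubc} directly and then to read off Natterer's case as a one-line specialization. First I would record the elementary but decisive fact that the solution operator annihilates $N(A)$: for $x \in N(A)$ we have $Ax = 0$, hence $\Amnd A x = 0$, so that
\[
\|\Amnd A\| = \sup\{\,\|\Amnd A \xd\| : \xd \in N(A)^\bot,\ \|\xd\| = 1\,\}.
\]
This reduces the whole problem to testing on $\xd \in N(A)^\bot$, which is exactly the range of $\xd$ relevant to the proposition.

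For the direction \eqref{ubc} $\Rightarrow$ \eqref{nc}, the natural (and essentially forced) choice is $u_n := P_n \xd \in X_n$. Then $\|\xd - u_n\| = \|(I-P_n)\xd\| \le \|\xd\|$, while $\|\Amnd A(\xd - u_n)\| = \|\Amnd A (I-P_n)\xd\| \le \|\Amnd A(I-P_n)\|\,\|\xd\|$, and the latter factor is bounded by $C$ thanks to the equivalent form \eqref{ubc2} of the uniform boundedness condition already proved in Theorem~\ref{th15}. Adding the two estimates gives \eqref{nc} with constant $1+C$ for every $\xd \in X$, so this direction is pure bookkeeping.

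The converse \eqref{nc} $\Rightarrow$ \eqref{ubc} carries the real content. Given $\xd \in N(A)^\bot$ and the associated $u_n \in X_n$ from \eqref{nc}, I would split $\Amnd A \xd = \Amnd A u_n + \Amnd A(\xd - u_n)$. The key identity to verify is that, for $u_n \in X_n$,
\[
\Amnd A u_n = \Amnd \Amn u_n = \Pi_{N(\Amn)^\bot} u_n,
\]
which follows from $\Amn u_n = Q_m A P_n u_n = Q_m A u_n$, the factorization $\Amnd = P_n \Amnd Q_m$, and the fact that $R(\Amnd) = N(\Amn)^\bot \subset X_n$ (Lemma~\ref{lemmaone}), so that the outer $P_n$ acts as the identity. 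Since $\Pi_{N(\Amn)^\bot}$ is an orthogonal projector, $\|\Amnd A u_n\| \le \|u_n\| \le \|\xd\| + \|\xd - u_n\|$; combining this with the triangle inequality and \eqref{nc} yields $\|\Amnd A \xd\| \le \|\xd\| + \|\xd - u_n\| + \|\Amnd A(\xd-u_n)\| \le (1+C)\|\xd\|$. By the reduction to $N(A)^\bot$ recorded above, this bounds $\|\Amnd A\|$ uniformly in $n,m$, which is \eqref{ubc}.

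Finally, in the case $Q_m = I$ we have $\Amn = A_n$ and $\Amnd = A_n^\dagger$, so $\|\Amnd A(\xd - u_n)\| \le \|A_n^\dagger\|\,\|A(\xd - u_n)\|$; hence Natterer's condition \eqref{natterer}, assumed for all $\xd \in N(A)^\bot$, majorizes the left-hand side of \eqref{nc} and so delivers \eqref{nc} for those $\xd$, whence \eqref{ubc} and the convergence $\xn \to \xd$ for all $\xd \in N(A)^\bot$. The only step I expect to require care is the identity $\Amnd A u_n = \Pi_{N(\Amn)^\bot} u_n$ in the converse; the rest is triangle-inequality estimation and careful tracking of the projectors $P_n$ and $Q_m$, with no essential obstacle beyond justifying that testing on $N(A)^\bot$ suffices.
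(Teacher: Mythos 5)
Your proof is correct and follows essentially the same route as the paper: the converse direction is the identical computation (split off $\Amnd A u_n = \Amnd \Amn u_n$, bound it by $\|u_n\|$ via the orthogonal projector $\Pi_{N(\Amn)^\bot}$, and apply the triangle inequality with \eqref{nc}), and the Natterer specialization is the same one-line majorization. The only deviation is cosmetic: in the easy direction you take $u_n = P_n\xd$ and invoke \eqref{ubc2}, whereas the paper takes $u_n = \xmn$, for which the second term in \eqref{nc} vanishes identically since $\Amnd A \xmn = \xmn$; both work.
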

\begin{proof}
If \eqref{ubc} and hence global convergence holds, then $u_n = \xmn$ satisfies 
\eqref{nc}. Conversely if \eqref{nc} holds, then 
since $Q_m A u_n = \Amn u_n$
\begin{align*} \|\Amn^\dagger A x \| & \leq 
 \|\Amnd Q_m A (x -u_n) \| +  \|\Amnd \Amn u_n\| \\
& \leq   \|\Amnd A (x -u_n) \| + 
c_1 \| u_n\| \\ & \leq 
\|\Amn^\dagger A (x -u_n) \| + 
c_1 \| u_n-\xd \| + c_1 \|\xd\| \leq C' \|\xd\| \, .
\end{align*}
It is easy to see that \eqref{natterer} implies \eqref{nc}.
\end{proof} 

From the conditions \eqref{threeA}--\eqref{fiveA}, probably 
\eqref{threeA} is the most useful. 
We introduce the norm of the pseudoinverse of the discretized forward operator 
\[ \|\Amnd\| = \frac{1}{\sigma_{\rm min}(Q_m A P_n)} = 
  \sup_{x \in P_n, Q_m A P_n x \not = 0}  \frac{(x,x)}{(x,P_n A^* Q_m A P_n x)},
\]
where $\sigma_{\rm min}$ denotes the smallest (by definition nonzero) singular value. 
We have the following result:
\begin{lemma}
If there exists a constant $C$  such that
\begin{equation}\label{thisaa} 
\forall n,m \qquad  \|(I-P_n) A^*Q_m \| \|\Amnd\| \leq C, 
\end{equation}
then \eqref{threeA} and hence \eqref{ubc} is satisfied. 
\end{lemma}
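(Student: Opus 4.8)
The plan is to verify condition \eqref{threeA} directly; once this is done, the conclusion follows at once, since Theorem~\ref{lemmasix} shows that \eqref{threeA} is equivalent to the uniform boundedness condition \eqref{ubc}. The starting point will be to rewrite both sides of \eqref{threeA} in terms of $\Amn$ and $\Amns$. Using $Q_m = Q_m^2$ together with the identity $\Amns = P_n A^* Q_m$, for any $x \in X$ the right-hand side becomes $P_n A^* Q_m A P_n x = \Amns \Amn x$, while the left-hand side can be written as $(I-P_n) A^* Q_m \Amn x$. Estimating the operator $(I-P_n) A^* Q_m$ by its norm then gives immediately
\[ \|(I-P_n) A^* Q_m A P_n x\| \leq \|(I-P_n) A^* Q_m\|\, \|\Amn x\|. \]

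The heart of the argument is to control $\|\Amn x\|$ by $\|\Amns \Amn x\|$ through the pseudoinverse norm. I would set $u := \Amn x$, so that $u \in R(\Amn)$, and recall from the duality relation \eqref{dual} that $R(\Amn) = N(\Amns)^\bot$. Since $\Amn = Q_m A P_n$ is finite-rank and hence has closed range, $\Amns$ shares the nonzero singular values of $\Amn$, and its restriction to $N(\Amns)^\bot$ is bounded below by the smallest nonzero singular value $\sigma_{\rm min}(\Amn) = \|\Amnd\|^{-1}$. Because $u$ lies precisely in $N(\Amns)^\bot$, it has no component annihilated by $\Amns$, so $\|\Amns u\| \geq \|\Amnd\|^{-1}\|u\|$, which rearranges to
\[ \|\Amn x\| \leq \|\Amnd\|\, \|\Amns \Amn x\|. \]

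Combining the two displayed estimates then yields
\[ \|(I-P_n) A^* Q_m A P_n x\| \leq \|(I-P_n) A^* Q_m\|\,\|\Amnd\|\, \|P_n A^* Q_m A P_n x\|, \]
and the hypothesis \eqref{thisaa} bounds the product $\|(I-P_n) A^* Q_m\|\,\|\Amnd\|$ by $C$ uniformly in $n,m$. This is exactly \eqref{threeA} with constant $C' = C$, and an appeal to Theorem~\ref{lemmasix} delivers \eqref{ubc}.

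The only step requiring genuine care is the singular-value lower bound $\|\Amns u\| \geq \sigma_{\rm min}(\Amn)\|u\|$ for $u \in R(\Amn)$. This is where the closedness of $R(\Amn)$ and the spectral correspondence between $\Amn$ and $\Amns$ enter, and it is essential that $u = \Amn x$ automatically lies in $R(\Amn) = N(\Amns)^\bot$ so that the bound applies without loss. I expect this to be the main conceptual point, but since $\Amn$ is finite-rank it is entirely routine, and in particular no density or space condition such as \eqref{spacecond} is needed here.
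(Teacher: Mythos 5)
Your proof is correct and follows essentially the same route as the paper: bound the left-hand side of \eqref{threeA} by $\|(I-P_n)A^*Q_m\|\,\|Q_m A P_n x\|$ via $Q_m=Q_m^2$, and then control $\|\Amn x\|$ by $\|\Amnd\|\,\|P_n A^*Q_m A P_n x\|$ through the lower singular-value bound for $\Amns$ on $N(\Amns)^\bot = R(\Amn)$. If anything, your write-up is slightly more careful than the paper's, which states the lower bound $\|P_n A^*Q_m z\|\ge\sigma_{\rm min}(Q_mAP_n)\|z\|$ without making explicit that it is applied to $z$ in the range of $\Amn$.
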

\begin{proof}
In view of \eqref{threeA}, we observe that 
\[ \|P_n A^*Q_m z\| \geq \sigma_{\rm min}(Q_m A P_n) \|z\|. \]
Taking $z = A P_n x$ and $\|(I-P_n)A^* Q_m AP_nx\| \leq 
\|(I-P_n)A^*Q_m \| \|Q_m A P_n x\|$ proves the assertion.
\end{proof}
Note that this result implies  in particular Theorem~\ref{thsimplie}.
In the same way we could prove the result by replacing \eqref{thisaa} by 
\begin{equation*}
\forall n,m \qquad  \|(I-P_n) A^*Q_mA\| \|\Amnd\|^2 \leq C. 
\end{equation*}

Natterer \cite{Na77} has outlined how to prove conditions like \eqref{nc} 
in practical situations, namely from inverse inequalities of approximation 
spaces combined with error estimates for the approximation. 
Using \eqref{thisaa} we can do a similar thing. 
\begin{proposition}
Let $(H_s,\|x\|_s)_{s \in \R}$ be a Hilbert scale generated by a densely defined unbounded 
selfadjoint strictly positive operator $L$, i.e.,
\[ \|x\|_s = \|L^s x\|. \]
Suppose that $A:H_0 \to H_0$ is such that 
for some numbers $l,r>0$ 
\begin{equation}\label{zero} 
c_1 \|x\|_{-l}  \leq  \|A x\|_{L^2} \leq c_2 \|x\|_{-r} \qquad \forall x \in H_0, 
\end{equation} 
and that $X_n$ is a discrete subspace satisfying 
the approximation condition 
\[ \|(I -P_n)z\| \leq \gamma_n \|z\|_{l} \qquad \forall z \in H_l , \]
and the inverse inequality 
\[ \|z_n\|_{L^2} \leq \frac{1}{\beta_n} \|z_n\|_{-r} \qquad \forall z_n \in X_n \]  
holds. Then if  
\[\limsup_{n}  \frac{\gamma_n}{\beta_n}   \leq C, \]
the uniform boundedness condition \eqref{ubc} holds. 
\end{proposition}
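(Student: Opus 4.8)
The plan is to verify the sufficient condition \eqref{thisaa} of the preceding lemma, namely that $\sup_{n,m}\|(I-P_n)A^*Q_m\|\,\|\Amnd\|$ is finite; once this is established, that lemma immediately delivers \eqref{threeA} and hence the uniform boundedness condition \eqref{ubc}. So I would estimate the two factors $\|(I-P_n)A^*Q_m\|$ and $\|\Amnd\|$ separately, controlling the first through the approximation condition and the second through the inverse inequality, each time converting the norm bound \eqref{zero} into the appropriate smoothing/mapping property of $A$ or $A^*$, and then read off that their product is dominated by a constant multiple of $\gamma_n/\beta_n$.

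For the first factor I would begin by discarding $Q_m$ via $\|(I-P_n)A^*Q_m\|\le\|(I-P_n)A^*\|$, which makes this factor automatically uniform in $m$. The key observation is that, since $L$ is selfadjoint and $\|AL^{s}\|=\|L^{s}A^*\|$, the upper inequality in \eqref{zero} translates by duality into a mapping property of $A^*$: it maps $H_0$ boundedly into the smoothness space carrying the norm $\|\cdot\|_l$ that appears in the approximation condition, with $\|A^*w\|_l\le c_2\|w\|$. Applying the approximation condition to $v=A^*w$ then gives $\|(I-P_n)A^*w\|\le\gamma_n\|A^*w\|_l\le c_2\gamma_n\|w\|$, i.e. $\|(I-P_n)A^*Q_m\|\le c_2\gamma_n$. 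For the second factor I would write $\|\Amnd\|=\sigma_{\rm min}(Q_mAP_n)^{-1}$ and bound $\sigma_{\rm min}$ from below on $X_n$: the lower inequality in \eqref{zero} gives $\|AP_n x\|\ge c_1\|x\|_{-l}$ for $x\in X_n$, and the inverse inequality $\|x\|\le\beta_n^{-1}\|x\|_{-r}$ then furnishes a lower bound of the relevant weak norm by $\beta_n\|x\|$, so that $\sigma_{\rm min}(AP_n)\ge c\,c_1\beta_n$ and $\|\And\|\le (c\,c_1\beta_n)^{-1}$. Multiplying the two estimates yields $\|(I-P_n)A^*Q_m\|\,\|\Amnd\|\le C'\,\gamma_n/\beta_n$, and since $\limsup_n \gamma_n/\beta_n\le C$ the finitely many remaining indices are also bounded, so \eqref{thisaa} holds and \eqref{ubc} follows.

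The hard part, and the place where care is genuinely needed, is the passage between the base norm and the fractional (negative-index) Hilbert-scale norms, together with keeping the indices consistent. Concretely, I expect two delicate steps: first, the duality argument that turns \eqref{zero} into the precise statement that $A^*$ lands in exactly the smoothness class demanded by the approximation condition, so that the approximation estimate may legitimately be applied to $A^*w$; and second, the comparison of the $\|\cdot\|_{-l}$ and $\|\cdot\|_{-r}$ norms on $X_n$, where one must invoke the inverse inequality, and — if $l$ and $r$ do not coincide — an interpolation inequality in the Hilbert scale, to obtain a genuine lower bound on $\sigma_{\rm min}$ scaled by $\beta_n$ rather than some other power. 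Getting these two exponents to combine so that the product is controlled precisely by $\gamma_n/\beta_n$ is the crux of the argument.

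A secondary point concerns uniformity in $m$. For the first factor it is immediate, since $\|Q_m\|\le 1$. For the second factor, however, an unrestricted $Q_m$ can lower $\sigma_{\rm min}(Q_mAP_n)$ below $\sigma_{\rm min}(AP_n)$, so one naturally works in the projected least-squares case $Q_m=I$ that is implicit in this Hilbert-scale formulation (or under a compatibility hypothesis on the range discretization). This matches the context of the preceding Natterer-type discussion and is the setting in which the condition $\limsup_n \gamma_n/\beta_n\le C$ is the expected, practically checkable hypothesis.
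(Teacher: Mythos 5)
Your proposal is correct and follows essentially the same route as the paper: verify the sufficient condition \eqref{thisaa} by bounding $\|(I-P_n)A^*Q_m\|\le c_2\gamma_n$ via the mapping property of $A^*$ extracted from the upper inequality in \eqref{zero} together with the approximation condition, and $\|\Amnd\|\le (c_1\beta_n)^{-1}$ via the lower inequality combined with the inverse inequality. You are in fact more careful than the paper's own proof on the two delicate points you flag — the paper silently identifies the indices $l$ and $r$ when applying the approximation and inverse inequalities, and works with $\|\And\|$ (i.e.\ $Q_m=I$) rather than $\|\Amnd\|$ — so no correction is needed.
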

\begin{proof}
We have that 
\[ \|A P_n x\| \geq c_1 \|P_n x\|_{-l} \geq  c_1 \beta_n\|P_n x\|. \]
Thus, 
\[ \|A_n^\dagger\| \leq \frac{1}{c_1 \beta_n}. \]
Moreover with $z = A^* x$ we find that 
\[ \|(I-P_n)A^*x \| \leq \gamma_n \|A^* x\|_{r}. \]
From the right hand side of \eqref{zero}, 
we see that $A L^{r}$ is a bounded linear operator and 
so is its adjoint 
$ L^r A^*$, i.e., $ \|A^* x\|_r \leq C \|x\|.$ 
Thus \eqref{thisaa} is satisfied by 
\[ \|(I-P_n)A^*x \| \|A_n^\dagger\| \leq 
C \frac{\gamma_n}{\beta_n}  \leq C. \] 
\end{proof}

In a typical case of finite-element spaces or spline spaces and 
if we consider a Hilbert scale of Sobolev spaces, then 
the inverse inequality is usually satisfied with $\beta_n = \frac{1}{n^l}$ and
the approximation condition with $\gamma_n = \frac{1}{n^r}$. 
Thus if $r = l$ is applicable, then we obtain convergence. 
A similar argument has been utilized by Natterer using condition
\eqref{natterer}.

The next result concerns the dual variant of \eqref{condadj}.
\begin{proposition}
If 
\begin{equation}\label{wiederwas}
\exists: \eta <1 :\forall n: \qquad  \|P_n \Pi_{N(\Upmns)} \| < \eta,  
\end{equation}
then \eqref{ubc} is satisfied. 
Moreover, \eqref{wiederwas} holds if 
\begin{equation}\label{wwa} \exists: \eta <1 :\forall n, \forall w: \qquad 
\inf_v \|A^*Q_mA P_n v - P_n w\| \leq \eta \|P_nw \|. 
\end{equation}
\end{proposition}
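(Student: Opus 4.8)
The plan is to reduce the first implication to condition \eqref{cond} of Theorem~\ref{lemmafive} and the second implication to a reinterpretation of the infimum in \eqref{wwa} as a distance. The single geometric fact driving everything is that $R(\Upmns) = R(\Amns) = N(\Amn)^\bot$ is a subspace of $X_n$ (cf.\ Lemma~\ref{lemmaone}), so that $\Pi_{R(\Upmns)} = P_n \Pi_{R(\Upmns)}$.

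For the first part, assume \eqref{wiederwas}. Since $R(\Upmns) \subset X_n$, I would insert $P_n$ for free, writing $\Pi_{N(\Upmns)} \Pi_{R(\Upmns)} = \Pi_{N(\Upmns)} P_n \Pi_{R(\Upmns)}$, and then estimate
\[ \|\Pi_{N(\Upmns)} \Pi_{R(\Upmns)}\| \le \|\Pi_{N(\Upmns)} P_n\|\,\|\Pi_{R(\Upmns)}\| = \|\Pi_{N(\Upmns)} P_n\| = \|P_n \Pi_{N(\Upmns)}\| < \eta, \]
where the last equality uses that orthogonal projectors are self-adjoint, so $\|\Pi_{N(\Upmns)} P_n\| = \|(P_n\Pi_{N(\Upmns)})^*\| = \|P_n \Pi_{N(\Upmns)}\|$. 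This is precisely \eqref{cond}, which by Theorem~\ref{lemmafive} is equivalent to the uniform boundedness condition \eqref{ubc}.

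For the second part, assume \eqref{wwa}. The key observation is that $\inf_v \|A^* Q_m A P_n v - P_n w\|$ is the distance from $P_n w$ to the range $R(\Upmn) = R(A^* Q_m A P_n)$, hence equals $\|(I-\Pi_{R(\Upmn)}) P_n w\|$. Since $(\Upmn)^* = \Upmns$, we have $R(\Upmn)^\bot = N(\Upmns)$, i.e.\ $I - \Pi_{R(\Upmn)} = \Pi_{N(\Upmns)}$, so \eqref{wwa} reads $\|\Pi_{N(\Upmns)} P_n w\| \le \eta \|P_n w\|$ for all $w$. As $w$ ranges over $X$, $P_n w$ ranges over all of $X_n$, so this is $\|\Pi_{N(\Upmns)} P_n\| \le \eta$; taking adjoints once more yields $\|P_n \Pi_{N(\Upmns)}\| \le \eta < 1$, which is \eqref{wiederwas}.

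I do not expect a serious obstacle: both implications are soft once the two recognitions are in place, namely that $R(\Upmns)$ lives in $X_n$ (so the factor $P_n$ can be inserted at no cost in the first part) and that the infimum in \eqref{wwa} is the distance to $R(\Upmn) = N(\Upmns)^\bot$ (in the second part). The only point to keep straight is the self-adjointness swap $\|P_n \Pi_{N(\Upmns)}\| = \|\Pi_{N(\Upmns)} P_n\|$, which is used in both directions. One may also remark that $R(\Upmn)$ is closed, since $\Amn$ factors through the finite-dimensional space $Y_m$ and hence $\Upmn = A^*\Amn$ has finite rank, so the infimum is actually attained; but even without this the distance formula holds because $\overline{R(\Upmn)}^\bot = R(\Upmn)^\bot = N(\Upmns)$.
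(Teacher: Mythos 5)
Your proof is correct. For the second implication you do exactly what the paper does: identify $\inf_v\|A^*Q_mAP_nv-P_nw\|$ with the distance $\|(I-\Pi_{R(\Upmn)})P_nw\|=\|\Pi_{N(\Upmns)}P_nw\|$ (using $R(\Upmn)^\bot=N(\Upmns)$) and then take adjoints; the paper in fact records this as an equivalence between \eqref{wiederwas} and \eqref{wwa}. For the first implication your route is genuinely a little different: the paper substitutes $x=A^*Q_mAw$ into \eqref{wwa} to arrive at condition \eqref{fourA} of Theorem~\ref{lemmasix}, whereas you insert $P_n$ in front of $\Pi_{R(\Upmns)}$ --- legitimate since $R(\Upmns)=R(\Amns)=N(\Amn)^\bot\subset X_n$ --- and use submultiplicativity to arrive at condition \eqref{cond} of Theorem~\ref{lemmafive}. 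Since \eqref{cond} and \eqref{fourA} are equivalent formulations of the same angle condition, the content is identical, but your version makes the geometry more transparent: \eqref{wiederwas} controls the angle between $N(\Upmns)$ and all of $X_n$, which dominates the angle between $N(\Upmns)$ and the subspace $R(\Upmns)\subset X_n$; this also explains the paper's subsequent remark that \eqref{wiederwas} is strictly stronger than \eqref{ubc}. One cosmetic point: your estimates deliver $\le\eta$ where \eqref{wiederwas} and \eqref{cond} are stated with strict inequality; since $\eta<1$ this is repaired by passing to $(1+\eta)/2$.
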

\begin{proof}
Inequality \eqref{wiederwas} can be written as 
\[ \eta >  \|\Pi_{N(\Upmns)} P_n \| = 
\|(I-\Pi_{R(\Upmn)}) P_n \|.  \]
By the characterization of orthogonal projectors as minimizers we have that
this is equivalent to 
\[ \inf_{v \in R(\Upmn)} \| v - P_n x\| \leq \eta \|P_n x\|, \]
which is exactly \eqref{wwa}.
Setting $x = A^*Q_mA w$ we obtain that this implies \eqref{fourA}.
\end{proof}

Note that \eqref{wiederwas} is not equivalent to \eqref{ubc} because 
\eqref{wiederwas} can only hold if the intersection of the corresponding
spaces is empty. However, if \mbox{$X_n \cap N(\Upmns) \not = \emptyset$}, then 
\eqref{wiederwas} cannot hold but \eqref{ubc} still can.

Let us now come to a necessary condition for uniform boundedness. 
We show that 
the uniform boundedness \eqref{ubc} implies the space condition 
\eqref{spacecond}. In the case $Q_m = I,$ this 
has already been observed by Du \cite{Du08}. 
\begin{proposition}
Let \eqref{ubc} hold, then 
\[ \bigcap_n (N(A) \cap X_n)^\bot = N(A)^\bot, \]
i.e., the space condition \eqref{spacecond} holds. 
\end{proposition}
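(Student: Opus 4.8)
The plan is to argue by contradiction: assuming \eqref{spacecond} (equivalently \eqref{spacecondprime}) fails, I will manufacture a fixed vector whose oblique projections blow up relative to its distance to the discretization spaces, contradicting \eqref{ubc}. First I would fix a convenient witness. Since $N(A)\cap X_n\subseteq N(A)$ we always have $N(A)^\bot\subseteq\bigcap_n(N(A)\cap X_n)^\bot$, so a failure of \eqref{spacecondprime} yields some $x_0\neq 0$ with $x_0\in\bigcap_n(N(A)\cap X_n)^\bot$ but $x_0\notin N(A)^\bot$. Writing $x_0$ along $N(A)\oplus N(A)^\bot$ and using that the $N(A)^\bot$-part is orthogonal to every $N(A)\cap X_n$, its $N(A)$-component lies in all $(N(A)\cap X_n)^\bot$ as well; hence I may assume outright that $x_0\in N(A)$, $\|x_0\|=1$, and $\Pi_{N(A)\cap X_n}x_0=0$ for every $n$.

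Next I would extract an identity from the oblique-projection machinery. By Corollary~\ref{cor10} with $k=n$, together with $\Amnd A P_n=\Amnd\Amn=\Pi_{R(\Amns)}$ (the computation used in Lemma~\ref{next}), one gets $\Pi_{N(Q_mA)\cap X_n}\,x_0=P_n x_0-\Pi_{R(\Amns)}x_0$. For fixed $n$ the inclusions \eqref{nested2} and the identity \eqref{spxx} show that the closed subspaces $N(Q_mA)\cap X_n$ decrease, as $m\to\infty$, to $\bigcap_m\bigl(N(Q_mA)\cap X_n\bigr)=N(A)\cap X_n$, so $\Pi_{N(Q_mA)\cap X_n}x_0\to\Pi_{N(A)\cap X_n}x_0=0$ strongly. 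Therefore $\Pi_{R(\Amns)}x_0\to P_n x_0$ as $m\to\infty$, with $n$ fixed. Because $A x_0=0$, I can rewrite $A P_n x_0=-A(I-P_n)x_0$, and using $\Amnd Q_m=\Amnd$ obtain $\Pi_{R(\Amns)}x_0=\Amnd\Amn x_0=-\Amnd A(I-P_n)x_0$. Invoking \eqref{ubc} in the decisive form $\|\Amnd A\|\le C$ then gives $\|\Pi_{R(\Amns)}x_0\|=\|\Amnd A(I-P_n)x_0\|\le C\,\|(I-P_n)x_0\|$ for all $n,m$.

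Finally I would pass to the limits in the correct order. Letting $m\to\infty$ at fixed $n$ turns the left side into $\|P_n x_0\|$, so $\|P_n x_0\|\le C\,\|(I-P_n)x_0\|$ for every $n$; letting now $n\to\infty$, the right side tends to $0$ while $\|P_n x_0\|\to\|x_0\|=1$, which is the desired contradiction, so \eqref{spacecond} must hold. The part demanding the most care is precisely this iterated limit: the $m$-limit has to be taken first, for fixed $n$, in order to collapse $N(Q_mA)\cap X_n$ down to $N(A)\cap X_n$ (this is where the general projection case differs from the $Q_m=I$ case of Du), and the bound by $C$ must be drawn from $\|\Amnd A\|\le C$ multiplying the vanishing factor $\|(I-P_n)x_0\|$ rather than from the merely bounded operator $\Amnd A(I-P_n)$ in \eqref{ubc2}, since only the former produces the contradiction after the $n$-limit.
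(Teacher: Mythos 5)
Your proof is correct, but it takes a genuinely different route from the paper's. The paper argues directly: for $x\in\bigcap_n(N(A)\cap X_n)^\bot$ it uses the decomposition \eqref{undec2b}, shows $\overline{w}_{n,m}\to 0$ in the iterated limit, invokes the angle condition \eqref{condadj} (equivalent to \eqref{ubc} via Theorem~\ref{lemmafive} and \eqref{thirda}) to get a uniform bound on $\overline{q}_{n,m}$ and hence $\overline{q}_{n,m}\rightharpoonup 0$, and concludes that $x$ is an iterated weak limit of the elements $\overline{v}_{n,m}\in R(\Upmn)\subset N(A)^\bot$, which is weakly closed. You instead argue by contradiction: after the (valid) reduction to a unit vector $x_0\in N(A)$ with $\Pi_{N(A)\cap X_n}x_0=0$ for all $n$, you combine Corollary~\ref{cor10} (at $k=n$) with the collapse of the decreasing subspaces $N(Q_mA)\cap X_n$ onto $N(A)\cap X_n$ from \eqref{nested2} and \eqref{spxx} to get $\Pi_{R(\Amns)}x_0\to P_nx_0$ as $m\to\infty$, while $Ax_0=0$ and $\|\Amnd A\|\le C$ give the quantitative bound $\|\Pi_{R(\Amns)}x_0\|\le C\|(I-P_n)x_0\|$; the two limits then clash. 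Your version buys several things: it uses \eqref{ubc} directly rather than routing through the angle characterization, it stays entirely in the norm topology (no weak compactness or subsequence extraction — indeed the subspaces $N(Q_mA)\cap X_n$ live in the finite-dimensional $X_n$, so they even stabilize in $m$), and the contradiction is sharp and self-contained. What the paper's direct argument buys in exchange is structural information — it exhibits every $x\in\bigcap_n(N(A)\cap X_n)^\bot$ explicitly as a weak limit of elements of $R(\Upmn)$ — and it recycles the machinery of Theorem~\ref{lemmafive} already developed for the global convergence analysis. Your closing caution about the order of limits (first $m\to\infty$ at fixed $n$) is exactly the right point to flag; it is where the general projection case genuinely differs from the $Q_m=I$ case.
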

\begin{proof}
Since $\bigcup_n (N(A) \cap X_n) \subset N(A)$ it follows that 
$N(A)^\bot \subset \bigcap_n (N(A) \cap X_n)^\bot$.  Thus, we only
need to proof the opposite inclusion. Let $x \in \bigcap_n (N(A) \cap X_n)^\bot.$
In view of \eqref{spxx} we have that for all $n$, 
\[ \lim_{m \to \infty } \|x - \Pi_{\left(N(Q_m A) \cap X_n\right)^\bot} x \| \to 0, \] 
thus using  \eqref{undec2b} for $x,$  we have that 
\[ \forall n: \qquad \lim_{m\to \infty}  \overline{w}_{n,m} = 0 . \]
By \eqref{nested2}, we  have that the double sequence 
$\|\overline{w}_{n,m}\|$ is  decreasing in $m$ for all $n$. 
Since 
\[ \|x\|^2 = \|\overline{w}_{n,m}\|^2 + \|\overline{v}_{n,m} + \overline{q}_{n,m}\|^2, \]
we have that for all $n,$ $\|\overline{v}_{n,m} + \overline{q}_{n,m}\|$ is 
increasing in $m$ and that\linebreak \mbox{$\lim_{m \to \infty} \overline{v}_{n,m} + \overline{q}_{n,m} = \xd.$}
Thus for all $n,$ $\sup_m \|\overline{v}_{n,m} + \overline{q}_{n,m}\| = \xd,$
and hence $\|\overline{v}_{n,m} + \overline{q}_{n,m}\|$ is bounded uniformly in $n,m$. 
Since \eqref{ubc} implies \eqref{condadj} using \eqref{thirda}, we have 
a constant $C$ such that 
\[ \|\overline{q}_{n,m}\| \leq C \| \overline{v}_{n,m} + \overline{q}_{n,m}\|  \leq C \|x\|.  \]
Thus,  $\|\overline{q}_{n,m}\|$ is uniformly bounded, it has a weakly convergent subsequence
as $n,m \to \infty,$  and as $q_{n,m} \in X_n^\bot$, it follows that this limit can only be $0$.
By a subsequence argument 
we conclude that $\wlim_{n,m \to \infty} \overline{q}_{n,m} = 0.$ 
It follows that the iterated limit $\wlim_{n \to \infty} \left( \wlim_{m \to \infty} \overline{q}_{n,m} \right) = 0.$ 
Thus, 
\[ x = \wlim_{n \to \infty} \left( \wlim_{m \to \infty}  \overline{v}_{n,m} \right). \] 
Since each $v_{n,m}$ is in $N(A)^\bot$, and this space is weakly closed, all the limits are 
in $N(A)^\bot$ as well, thus $x \in N(A)^\bot$. 
\end{proof}

\subsection{Conditions for local convergence}\label{secloc}
We are now interested in local convergence results, i.e., 
to study the question if for a given a
specific element $\xd \in N(A)^\bot$ the corresponding sequence 
$\xmn$ converges (weakly or strongly). The difference to the previous 
section is that 
the conditions imposed here 
are not ``uniform'' in $\xd$ but depend on the specific $\xd$.

A practically useful sufficient condition for strong convergence is a 
simple consequence of Lemma~\ref{next} (compare Theorem~\ref{thsimplie}).
\begin{proposition}
If 
\[ \lim_{m,n\to \infty} \|\Amnd\| \|A (I- P_n) \xd\| \to 0, \]
then $\xmn \to \xd $ as $m,n\to \infty$.
\end{proposition}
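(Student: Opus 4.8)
The plan is to read this off directly from Lemma~\ref{next}, which already supplies the key estimate
\[ \limsup_{n,m \to \infty} \|\xmn - \xd\| \leq \limsup_{n,m \to \infty} \|\Amnd A (I-P_n) \xd\|. \]
The entire task therefore reduces to passing from the expression $\|\Amnd A (I-P_n)\xd\|$ on the right-hand side to the product $\|\Amnd\|\,\|A(I-P_n)\xd\|$ that appears in the hypothesis.

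First I would recall that, by the discretization, $\Amn = Q_m A P_n$ has closed (indeed finite-dimensional) range, so $\Amnd$ is a genuinely bounded operator defined on all of $Y$, with $\|\Amnd\| = 1/\sigma_{\rm min}(Q_m A P_n)$. Viewing $A(I-P_n)\xd$ as an element of $Y$, submultiplicativity of the operator norm then gives
\[ \|\Amnd A (I-P_n)\xd\| \leq \|\Amnd\| \, \|A(I-P_n)\xd\|. \]

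Combining this bound with Lemma~\ref{next} yields
\[ \limsup_{n,m \to \infty} \|\xmn - \xd\| \leq \limsup_{n,m \to \infty} \|\Amnd\| \, \|A(I-P_n)\xd\| = 0, \]
where the last equality is precisely the assumption. Since a nonnegative quantity whose $\limsup$ vanishes must itself tend to zero, we conclude $\xmn \to \xd$ as $m,n\to\infty$.

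As for difficulty, there is essentially no obstacle: the statement is a one-line consequence of the lemma together with the submultiplicative factorization. The only point worth flagging is that the step $\|\Amnd A(I-P_n)\xd\| \leq \|\Amnd\|\,\|A(I-P_n)\xd\|$ relies on $\Amnd$ being globally bounded on $Y$, which is guaranteed by the closedness of $R(\Amn)$ and was already noted in the discussion preceding the statement; without finite-dimensionality one would have to justify that the argument $A(I-P_n)\xd$ lies in the domain of $\Amnd$.
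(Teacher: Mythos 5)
Your proof is correct and matches the paper's intent exactly: the paper states this proposition as ``a simple consequence of Lemma~\ref{next}'' without writing out the details, and your combination of that lemma with the submultiplicative bound $\|\Amnd A(I-P_n)\xd\| \le \|\Amnd\|\,\|A(I-P_n)\xd\|$ is precisely the intended one-line argument. Your remark that boundedness of $\Amnd$ on all of $Y$ rests on the closed (finite-dimensional) range of $\Amn$ is a correct and worthwhile clarification.
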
 
Hence if $\xd$ can be approximated well in $X_n$, we can hope for strong 
convergence. This result is quite crude compared to Theorem~\ref{GN}, where
(except for  weak convergence)  equivalent conditions to convergence are established. 
However, the mentioned  theorem of  Groetsch and Neubauer (with the supplementary result of Du)
can also be extended with minor modifications to the case of $Q_m \not = I$. 
\begin{theorem}\label{th25}
We have the following local convergence conditions for strong convergence.
 \begin{align} \label{strongnm}
  \xmn \to  \xd & \Longleftrightarrow   \limsup_{n,m} \|\xmn\| \leq \|\xd\|.
 \end{align}

Suppose that the space condition \eqref{spacecond} holds.
Then we have the following local convergence conditions for weak convergence.
\begin{align*} 
\xmn \rightharpoonup \xd &\Longleftrightarrow \sup_{n,m} \|\xmn\| < \infty. 
 \end{align*}
\end{theorem}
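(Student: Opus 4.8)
The plan is to prove both equivalences by reducing the nontrivial implications to a single structural statement about the weak cluster points of $\xmn$. The two trivial directions are immediate: if $\xmn \to \xd$ strongly then $\|\xmn\| \to \|\xd\|$, so $\limsup_{n,m}\|\xmn\| = \|\xd\|$; and if $\xmn \rightharpoonup \xd$ then the sequence is bounded by Banach--Steinhaus, so $\sup_{n,m}\|\xmn\| < \infty$. It therefore remains to deduce convergence \emph{from} the respective norm conditions.

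First I would record the identity extracted from the proof of Lemma~\ref{next}, namely $\xmn = \Pi_{R(\Amns)}\xd + \Amnd A(I-P_n)\xd$, together with the fact (Lemma~\ref{ellem}) that $\Pi_{R(\Amns)}\xd$, being the best approximation of $\xd$ in $N(\Amn)^\bot = R(\Amns)$, converges strongly to $\xd$. In either part the hypothesis makes $\xmn$ bounded, so it has weak cluster points; let $\xmnk \rightharpoonup \bar x$ be any weakly convergent subsequence. The whole game is to show $\bar x = \xd$, since once every weak cluster point equals $\xd$, boundedness forces $\xmn \rightharpoonup \xd$.

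The key structural step, valid without any extra hypothesis, is that the component of $\bar x$ in $N(A)^\bot$ is already correct, i.e.\ $\Pi_{N(A)^\bot}\bar x = \xd$. To see this I would test against $A^*\psi$, $\psi \in Y$, and use $\Amn\Amnd = \Pi_{R(\Amn)}$ together with $\xmn \in X_n$ to get $Q_m A \xmn = \Pi_{R(\Amn)} A\xd$; since $\Amn\xd = Q_m A P_n\xd \to A\xd$ gives $\text{dist}(A\xd, R(\Amn)) \to 0$, this projection converges strongly to $A\xd$. Writing $(A\xmn,\psi) = (Q_m A\xmn,\psi) + (A\xmn,(I-Q_m)\psi)$, where the first term tends to $(A\xd,\psi)$ and the second vanishes because $\{A\xmn\}$ is bounded and $(I-Q_m)\psi \to 0$, yields $(\xmn, A^*\psi) \to (\xd, A^*\psi)$. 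As $R(A^*)$ is dense in $N(A)^\bot$ and the subsequence is bounded, $\bar x$ and $\xd$ agree on $N(A)^\bot$.

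It remains to control the $N(A)$-component of $\bar x$, and this is where the two parts genuinely diverge and where I expect the only real subtlety. For weak convergence I would invoke the space condition \eqref{spacecond}: for $g \in N(A)\cap X_n$ one has $\Amn g = Q_m A g = 0$, so $g \in N(\Amn)$ and hence $(\xmn, g) = 0$ for all large indices; passing to the weak limit gives $(\bar x, g) = 0$ on $\bigcup_n (N(A)\cap X_n)$, and \eqref{spacecond} extends this to all of $N(A)$, so $\Pi_{N(A)}\bar x = 0$ and $\bar x = \xd$. For strong convergence, where \eqref{spacecond} is not assumed, the norm bound does the work instead: weak lower semicontinuity gives $\|\bar x\| \le \liminf\|\xmnk\| \le \|\xd\|$, while the orthogonal splitting $\|\bar x\|^2 = \|\xd\|^2 + \|\Pi_{N(A)}\bar x\|^2$ (from $\Pi_{N(A)^\bot}\bar x = \xd$) forces $\Pi_{N(A)}\bar x = 0$ and again $\bar x = \xd$. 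Having shown $\xmn \rightharpoonup \xd$ in both cases, the strong statement \eqref{strongnm} is finished by the standard computation $\|\xmn - \xd\|^2 = \|\xmn\|^2 - 2(\xmn,\xd) + \|\xd\|^2$, in which $(\xmn,\xd) \to \|\xd\|^2$ and $\limsup\|\xmn\|^2 \le \|\xd\|^2$ give $\limsup\|\xmn-\xd\|^2 \le 0$.
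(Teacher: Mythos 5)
Your proof is correct and follows essentially the same route as the paper's: extract a weakly convergent subsequence, show its limit differs from $\xd$ by an element of $N(A)$, eliminate that component via the space condition \eqref{spacecond} (weak case) or the Pythagorean norm inequality (strong case), conclude by a subsequence argument and the Radon--Riesz property. The only difference is that you supply an explicit verification (testing against $A^*\psi$ and using $\Amn\Amnd = \Pi_{R(\Amn)}$) of the step $\bar x - \xd \in N(A)$, which the paper simply cites from Groetsch and Neubauer.
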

\begin{proof}
Consider first the part on weak convergence. By boundedness, $\xmn$ has a weakly
convergent subsequence with limit $u.$ As in \cite{GrNe88} it follows immediately 
that $u -\xd \in N(A).$ Moreover each $\xmn \in (N(Q_m A) \cap X_n)^\bot \subset (N(A) \cap X_n)^\bot,$
thus  $\xmn  \in (N(A) \cap X_k)^\bot$ for all $k \geq n.$ This implies that 
$u \in \bigcap_n (N(A) \cap X_n)^\bot$, and by \eqref{spacecond}, $u \in N(A)^\bot$. 
Thus $u-\xd \in N(A) \cap N(A)^\bot,$  hence $u = \xd$. By a subsequence argument, 
$\xmn \rightharpoonup \xd.$ 
For \eqref{strongnm}, we do not need \eqref{spacecond}. The proof follows \cite{Du08}:
from \eqref{strongnm}, we again find a weakly convergence subsequence with limit $u$ and  
 $u -\xd \in N(A)$ and $\xd \in N(A)^\bot$. Thus, 
\[ \|u-\xd\|^2 + \|\xd\|^2 = \|u\|^2 \leq 
\liminf_{n,m} \|\xmn\|^2 \leq \limsup_{n,m} \|\xmn\|^2 \leq \|\xd\|^2, \]
thus $u = \xd$ and as before $\xmn \rightharpoonup \xd.$ From \eqref{strongnm} we also find that 
$\|\xmn\| \to \|\xd\|$, which together with weak convergence implies strong convergence. 
The other directions of the implications are trivial. 
\end{proof}

In a next step, we  replace \eqref{spacecond} by other ``local'' conditions. 
\begin{lemma}
We have that 
\begin{equation}\label{null}
\xmn \rightharpoonup \xd \qquad \Longleftrightarrow 
\begin{cases} \sup_{n,m} \|\xmn\| < \infty & \text{ and}  \\
 \Pi_{N(A)} \xmn \rightharpoonup 0.  \end{cases} 
 \end{equation} 
\end{lemma}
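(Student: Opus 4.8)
The plan is to prove the two implications separately, the forward one being immediate and the reverse one resting on a subsequence argument that pins down every weak subsequential limit of $\xmn$.

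For the forward direction ($\Rightarrow$), suppose $\xmn \rightharpoonup \xd$. A weakly convergent sequence is norm-bounded by the uniform boundedness principle, so $\sup_{n,m}\|\xmn\| < \infty$. Since $\Pi_{N(A)}$ is a bounded linear operator it is weakly continuous, and as $\xd \in N(A)^\bot$ we have $\Pi_{N(A)}\xd = 0$; hence $\Pi_{N(A)}\xmn \rightharpoonup \Pi_{N(A)}\xd = 0$. This settles one direction with no work.

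For the reverse direction ($\Leftarrow$), assume $\sup_{n,m}\|\xmn\| < \infty$ and $\Pi_{N(A)}\xmn \rightharpoonup 0$. Boundedness guarantees that every subsequence of $\xmn$ has a weakly convergent sub-subsequence; let $u$ be such a weak limit. I would identify $u=\xd$ from two facts. First, any weak subsequential limit $u$ satisfies $u-\xd \in N(A)$; this is precisely the Groetsch--Neubauer step already carried out in the proof of Theorem~\ref{th25}, so it can be invoked here directly. Second, weak continuity of $\Pi_{N(A)}$ together with the hypothesis $\Pi_{N(A)}\xmn \rightharpoonup 0$ gives $\Pi_{N(A)}u = 0$, i.e.\ $u \in N(A)^\bot$. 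Combining the two, $u-\xd \in N(A)$ while $u,\xd \in N(A)^\bot$, so $u-\xd \in N(A)\cap N(A)^\bot = \{0\}$ and $u = \xd$. Since the bounded sequence $\xmn$ has every weakly convergent subsequence converging to the same limit $\xd$, the standard subsequence argument yields $\xmn \rightharpoonup \xd$ for the full (doubly indexed) family.

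The only delicate point is the step $u-\xd \in N(A)$, which is why I would lean on Theorem~\ref{th25} rather than reprove it: in the general projection setting it is obtained from the normal equations $P_n A^* Q_m A(\xmn-\xd)=0$ by testing against a fixed $z \in X_k$ (valid for $n\geq k$) and passing to the limit in $(A(\xmn-\xd),\,Q_m A z)=0$. Here one uses that $A\xmn \rightharpoonup Au$ weakly while $Q_m A z \to A z$ strongly, so the inner product converges to $(A(u-\xd),Az)=0$; density of $\bigcup_k X_k$ then gives $A^*A(u-\xd)=0$, i.e.\ $u-\xd \in N(A)$. Handling this weak-times-strong double limit in the presence of $Q_m$ is the main obstacle, but since it is already established it does not need to be repeated.
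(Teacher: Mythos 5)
Your proposal is correct and follows essentially the same route as the paper: both directions rest on the self-adjointness/weak continuity of $\Pi_{N(A)}$, and the reverse implication extracts a weakly convergent subsequence, imports the step $u-\xd\in N(A)$ from the proof of Theorem~\ref{th25}, and then uses the hypothesis $\Pi_{N(A)}\xmn\rightharpoonup 0$ to force $u=\xd$ before the standard subsequence argument. The paper phrases the last step as a direct computation $\|u-\xd\|^2=\lim_k(\xmnk-\xd,\Pi_{N(A)}(u-\xd))=0$ rather than via $u\in N(A)\cap N(A)^\bot=\{0\}$, but this is only a cosmetic difference.
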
 
\begin{proof}
Suppose that $\xmn$ converges weakly to $\xd$. Then for arbitrary $z$ 
\begin{align*}  \lim_{n,m \to \infty} (\Pi_{N(A)} \xmn ,z) &= 
 \lim_{n,m \to \infty} ( \xmn , \Pi_{N(A)}z) \\
 &=  \lim_{n,m \to \infty} ( \xmn -\xd, \Pi_{N(A)}z) = 0,
 \end{align*}
where we used that  $(\xd, \Pi_{N(A)}z) = 0$ as $\xd \in N(A)^\bot$.
Conversely, let $\xmn$ be bounded and  $\Pi_{N(A)} \xmn \rightharpoonup 0.$ 
As in the proof before, $\xmn$ has a weakly convergent subsequence with limit $u$
such that $u-\xd \in N(A).$
Thus with $z = u-\xd$ we have that 
\begin{align*}  0 &= \lim_k ( \Pi_{N(A)} \xmnk ,u-\xd) = 
 \lim_k (\xmnk  , \Pi_{N(A)}(u-\xd))\\
 &= 
  \lim_k (\xmnk  -\xd , \Pi_{N(A)}(u-\xd)) = 
  \|u-\xd\|^2, 
  \end{align*}
thus $u = \xd$.  By a subsequence argument $\xmn \rightharpoonup \xd.$
\end{proof}

As a consequence, we can find sufficient conditions for weak and strong local convergence
generalizing  the results  of Luecke and Hickey.
\begin{proposition}\label{above90}
Suppose that with  some fixed constant $C,$ there exists   for each $n,m$ 
an index pair $(\tilde{m}_{n,m},\tilde{n}_{n,m}) \in \N\times \N$  with 
$\lim_{n,m\to \infty} \tilde{n}_{n,m} = \infty,$  and $\tilde{m}_{n,m} \geq m,$
and $\tilde{n}_{n,m} \leq n,$  
such that 
\begin{equation}\label{bcc}
\sup_{n,m} \|\xmn\| < C \quad \text{and} \quad 
\sup_{n,m}  \|A^* \Amndsind{\tilde{m}_{n,m},\tilde{n}_{n,m}} \xmn\| < C\, .
\end{equation}
Then $\xmn \rightharpoonup \xd$ as $m,n\to \infty$.

If we can choose $(\tilde{m}_{n,m},\tilde{n}_{n,m}) = (n,m)$, i.e., 
\begin{equation}\label{bbc1}
\sup_{n,m} \|\xmn\| < C \quad \text{and} \quad 
\sup_{n,m}  \|A^* \Amndsind{n,m} \xmn\| < C,
\end{equation}
then $\xmn \to \xd$ as $m,n\to \infty$.
\end{proposition}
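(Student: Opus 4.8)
For the weak statement the plan is to invoke the characterization \eqref{null}: boundedness $\sup_{n,m}\|\xmn\|<\infty$ is part of the hypothesis, so it remains to prove $\Pi_{\NA}\xmn\rightharpoonup 0$, i.e.\ $(\xmn,z)\to 0$ for every $z\in\NA$. Write $B_{n,m}:=Q_{\tilde m_{n,m}}AP_{\tilde n_{n,m}}$ for the sub-discretized operator figuring in \eqref{bcc}, so that $B_{n,m}^*=P_{\tilde n_{n,m}}A^*Q_{\tilde m_{n,m}}$, and put $\omega_{n,m}:=(B_{n,m}^*)^\dagger\xmn$ and $\phi_{n,m}:=A^*\omega_{n,m}$. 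The second half of \eqref{bcc} says precisely $\|\phi_{n,m}\|\le C$, and since $\phi_{n,m}\in R(A^*)\subseteq\NA^\bot$ we have $(\phi_{n,m},z)=0$ for all $z\in\NA$. Moreover $B_{n,m}^*\omega_{n,m}=\Pi_{N(B_{n,m})^\bot}\xmn$, and as $\omega_{n,m}\in R(B_{n,m})\subseteq Y_{\tilde m_{n,m}}$ this reads $P_{\tilde n_{n,m}}\phi_{n,m}=\Pi_{N(B_{n,m})^\bot}\xmn$.

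The core is a splitting of a fixed $z\in\NA$. First, $z=P_{\tilde n_{n,m}}z+(I-P_{\tilde n_{n,m}})z$; since $\tilde n_{n,m}\to\infty$, \eqref{projdef} gives $(I-P_{\tilde n_{n,m}})z\to0$, whence $(\xmn,(I-P_{\tilde n_{n,m}})z)\to0$ by boundedness of $\xmn$. Next I decompose $P_{\tilde n_{n,m}}z\in X_{\tilde n_{n,m}}$ orthogonally, using Lemma~\ref{lemmaone}, into a part in $N(Q_{\tilde m_{n,m}}A)\cap X_{\tilde n_{n,m}}$ and a part in $N(B_{n,m})^\bot$. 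The first part is orthogonal to $\xmn$: by \eqref{nested1} and \eqref{nested2}, together with $\tilde n_{n,m}\le n$ and $\tilde m_{n,m}\ge m$, it lies in $N(Q_mA)\cap X_n$, a space to which $\xmn\in N(\Amn)^\bot$ is orthogonal. For the second part, which equals $\Pi_{N(B_{n,m})^\bot}z$ since $N(B_{n,m})^\bot\subseteq X_{\tilde n_{n,m}}$, I use $P_{\tilde n_{n,m}}\phi_{n,m}=\Pi_{N(B_{n,m})^\bot}\xmn$ to rewrite $(\xmn,\Pi_{N(B_{n,m})^\bot}z)=(\phi_{n,m},P_{\tilde n_{n,m}}z)$; this splits as $(\phi_{n,m},z)+(\phi_{n,m},(P_{\tilde n_{n,m}}-I)z)$, where the first term vanishes and the second is bounded by $C\|(I-P_{\tilde n_{n,m}})z\|\to0$. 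Adding up, $(\xmn,z)\to0$, so $\Pi_{\NA}\xmn\rightharpoonup0$ and $\xmn\rightharpoonup\xd$.

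For the strong statement under \eqref{bbc1} the sub-operator is $B_{n,m}=\Amn$ itself and $\omega_{n,m}=\Amnsd\xmn$. Because $\xmn\in N(\Amn)^\bot=R(\Amns)$, now $\Amns\omega_{n,m}=\xmn$ holds exactly, i.e.\ $P_n\phi_{n,m}=\xmn$ (again using $\omega_{n,m}\in R(\Amn)\subseteq Y_m$). I would then combine two facts. First, $(I-P_n)\phi_{n,m}\rightharpoonup0$ (boundedness of $\phi_{n,m}$ and \eqref{projdef}), so together with $\xmn=P_n\phi_{n,m}\rightharpoonup\xd$ from the weak part one gets $\phi_{n,m}\rightharpoonup\xd$. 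Second, the identity $(\xd,\phi_{n,m})=(A\xd,\omega_{n,m})=(y,\omega_{n,m})=\|\xmn\|^2$, the last step because $\omega_{n,m}\in R(\Amn)$ and $\Amn\xmn=\Amn\Amnd y=\Pi_{R(\Amn)}y$. Hence $\|\xmn\|^2=(\xd,\phi_{n,m})\to\|\xd\|^2$, so $\limsup_{n,m}\|\xmn\|\le\|\xd\|$, and Theorem~\ref{th25} delivers $\xmn\to\xd$.

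I expect the decisive point to be the bookkeeping that makes the $N(Q_mA)\cap X_n$ component drop out, for this is exactly where the nesting requirements $\tilde m_{n,m}\ge m$, $\tilde n_{n,m}\le n$ (and $\tilde n_{n,m}\to\infty$) are indispensable: without the correct inclusion of nullspaces the orthogonality $\xmn\perp N(Q_mA)\cap X_n$ cannot be brought to bear. The second subtle step is spotting the exact identity $(\xd,\phi_{n,m})=\|\xmn\|^2$, which is what turns the mere boundedness of $\phi_{n,m}$ into the sharp bound $\limsup\|\xmn\|\le\|\xd\|$ required for norm convergence.
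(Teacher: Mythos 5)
Your proof is correct and follows essentially the same route as the paper: both rest on the characterization \eqref{null}, on the orthogonal splitting of $X_{\tilde{n}_{n,m}}$ into $N(Q_{\tilde{m}_{n,m}}A)\cap X_{\tilde{n}_{n,m}}$ and its complement (which is where the nesting conditions $\tilde{m}_{n,m}\geq m$, $\tilde{n}_{n,m}\leq n$ kill the nullspace component, exactly as in the paper's observation that $\overline{w}_{\tilde{m}_{n,m},\tilde{n}_{n,m}}=0$ in \eqref{undec2b}), and, for the strong part, on the identity $\|\xmn\|^2=(\xd,A^*\Amndsind{n,m}\xmn)$ combined with $A^*\Amndsind{n,m}\xmn\rightharpoonup\xd$ and Theorem~\ref{th25}. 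The only (cosmetic) difference is that in the weak part you pair directly against a fixed $z\in N(A)$ and estimate, whereas the paper decomposes $\xmn$ itself and extracts weakly convergent subsequences of the components; your dual formulation avoids the subsequence argument but is the same computation.
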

\begin{proof}
We apply \eqref{undec2b} and get for all $n,m$
\[ \xmn =A^* \Amndsind{\tilde{m}_{n,m},\tilde{n}_{n,m}} \xmn + \overline{w}_{\tilde{m}_{n,m},\tilde{n}_{n,m}} + 
\overline{q}_{\tilde{m}_{n,m},\tilde{n}_{n,m}}. \]
By \eqref{bcc} it follows that $\overline{w}_{\tilde{m}_{n,m},\tilde{n}_{n,m}} + 
\overline{q}_{\tilde{m}_{n,m},\tilde{n}_{n,m}} $ is uniformly bounded, and  
since these two elements are orthogonal to each other  it follows that both components are uniformly bounded as well, hence
they have weakly convergent subsequences  as $n,m \to \infty$ with limit $w,q$.

For fixed $k$,  $P_k q =
\wlim_{n,m \to \infty} P_k \overline{q}_{\tilde{m}_{n,m},\tilde{n}_{n,m}} = 0$ since $\tilde{n}_{n,m} \to \infty,$ and 
 thus it follows that $q =0$. 
Since $\xmn \in (N(Q_{\tilde{m}_{n,m}} A) \cap X_{n})^\bot,$ for $\tilde{m}_{n,m} \geq m,$ 
and $(N(Q_{\tilde{m}_{n,m}} A) \cap X_{n})^\bot \subset (N(Q_{\tilde{m}_{n,m}} A) \cap X_{\tilde{n}_{n,m}})^\bot,$
we have that $\overline{w}_{\tilde{m}_{n,m},\tilde{n}_{n,m}} = 0$. 
Thus we have for a subsequence 
\[ \wlim_{k \to \infty} \left(\xmnk -A^* \Amndsind{\tilde{m}_{m_k,n_k},\tilde{n}_{m_k,n_k}} \xmnk  \right) = 0. \]
By a subsequence argument we have that this holds for the whole sequence. 
\begin{equation}\label{blalab}  
 \wlim_{n,m \to \infty} 
(\xmn - A^* \Amndsind{\tilde{m}_{n,m},\tilde{n}_{n,m}} \xmn) = 0.   
\end{equation}
It follows that 
\[ \Pi_{N(A)} \xn =  \Pi_{N(A)} (\xmn - A^* \Amndsind{\tilde{m}_{n,m},\tilde{n}_{n,m}} \xmn)  \rightharpoonup 0,
\qquad \text{as } m,n\to \infty,
\]
thus, by \eqref{null} we obtain the result that $\xn \rightharpoonup \xd$ as  $m,n\to \infty.$ 
Since \eqref{bbc1} is a special case of \eqref{bcc}, we have that under 
\eqref{bbc1}, $\xmn$ converges weakly to $\xd$ 
and furthermore by \eqref{blalab}
also  that $ A^* \Amndsind{n,m} \xmn \rightharpoonup \xd$  as  $m,n\to \infty.$ 
Thus, by weak convergence, 
\[ \lim_{n,m\to \infty} \|\xmn\|^2 = \lim_{n,m \to \infty} \left( A^* \Amndsind{n,m} \xmn,\xd \right) = (\xd,\xd)  = \|\xd\|^2.\]
By the Radon-Riesz property, we obtain that $\|\xmn -\xd\| \to 0.$ 
\end{proof}

\begin{remark}
Proposition~\ref{above90} includes  Theorem~\ref{luheth} as a special case.
Indeed, setting $Q_m = I$, from \eqref{luhe}, the boundedness of $\xn$ and 
$\|A^* \Andsind{n} \xn\|$ follows immediately, and thus by \eqref{bbc1} 
we obtain Theorem~\ref{luheth} as a corollary.
\end{remark}

\section{A counterexample}\label{sec:four}
In this section we provide a nontrivial example of a sequence of 
projected least-squares solutions, 
$\xn,$ which is  bounded but non even weakly convergent. 
Note that Du's example considers a similar situation but the sequence 
$\xn$ is strongly convergent (but not to $\xd$). 
The example again stresses the importance of the space conditions \eqref{spacecond}
and the fact that the part on weak convergence in Theorem~\ref{GN} is false 
without the space condition \eqref{spacecond}.

\begin{theorem}\label{neucount}
There exists an operator $A$ and $\xd$ and  a sequence of finite-dimensional spaces  $(X_n)_n$ 
satisfying \eqref{projdef}
such that 
$x_{2n} \rightharpoonup u$ but $x_{2n} \not\to u$ and  
$x_{2n +1} \rightharpoonup v,$ but $x_{2n+1} \not \to v$ and $u,v \not = \xd$.
In particular the sequence $\xn$ neither converges weakly nor strongly
and it has no weakly convergent subsequence has limit $\xd$.
\end{theorem}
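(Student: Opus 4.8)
The plan is to construct everything explicitly in $\ell^2$ with an orthonormal basis $(e_k)_{k\ge0}$, taking $A$ compact with a one-dimensional nullspace $N(A)=\mathrm{span}\{e_0\}$ and an exact solution $\xd\in N(A)^\bot$ (for instance $\xd=\sum_{k\ge1}\xi_k e_k$ with a square-summable but slowly decaying sequence $\xi_k$, so that genuine ill-posedness is present). The heart of the construction is the choice of the trial spaces $X_n$: I would let each $X_n$ be spanned by finitely many basis vectors together with one extra vector that is \emph{tilted} toward the nullspace direction $e_0$, and I would make the amount and sign of this tilt depend on the parity of $n$. This is exactly the mechanism by which the space condition \eqref{spacecond} fails: although $N(A)=\mathrm{span}\{e_0\}$, one can arrange $N(A)\cap X_n=\{0\}$ for every $n$, so that $\bigcup_n\bigl(N(A)\cap X_n\bigr)=\{0\}$ while $\overline{\bigcup_n X_n}=\ell^2$, keeping \eqref{projdef} intact. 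This is precisely the phenomenon that makes \eqref{gnweak} fail in the absence of \eqref{spacecond}, and the diagnostic I would keep in view is \eqref{null}: since $\xd\in N(A)^\bot$, the sequence fails to converge weakly to $\xd$ exactly when $\Pi_{N(A)}\xn$ does not tend weakly to $0$.

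The key computational step is to get a handle on $\xn=\And A\xd$. By Proposition~\ref{propang}, $\xn$ is the first component of $\xd$ in the oblique decomposition \eqref{undec1} (taking $Q_m=I$), equivalently the minimum-norm minimizer over $X_n$ of $\|A(x-\xd)\|$, and it satisfies the normal equations $P_n A^* A(\xn-\xd)=0$. Because the only generator of $X_n$ carrying an $e_0$-component is the single tilted vector, these normal equations reduce to a low-dimensional linear system whose solution I can write out; reading off the $e_0$-coefficient gives $\Pi_{N(A)}\xn=(\xn,e_0)\,e_0=c_n e_0$ in closed form. Choosing the tilt angles so that $c_{2n}\to a$ and $c_{2n+1}\to b$ with $a\neq b$ and $a,b\neq0$ then produces two distinct subsequential weak limits $u,v$ with $u-v$ a nonzero multiple of $e_0$; since $\Pi_{N(A)}x_{2n}\rightharpoonup a e_0\neq0$ and likewise along the odd indices, \eqref{null} shows $u,v\neq\xd$ and that the full sequence does not converge weakly.

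It remains to verify boundedness and to rule out strong convergence along each subsequence. Boundedness is only claimed pointwise, for the single $\xd$; indeed, since \eqref{spacecond} fails, the uniform bound \eqref{ubc} cannot hold by the last Proposition of Section~\ref{necsuff}, so I would obtain $\sup_n\|\xn\|<\infty$ directly from the explicit formula, bounding $|c_n|$ uniformly and controlling the remaining components via the decay of $\xd$. To defeat strong convergence I would exploit the slow decay of $\xd$ (the ill-posedness of $A$) so that the tail of $\xn$ carries a fixed amount of extra energy surviving in the limit, arranging $\liminf_n\|x_{2n}\|^2>\|u\|^2$ and similarly for $v$. Then $x_{2n}\rightharpoonup u$ while $\|x_{2n}\|\not\to\|u\|$, and weak lower semicontinuity of the norm rules out $x_{2n}\to u$; the same applies along the odd indices. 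Collecting these facts yields $x_{2n}\rightharpoonup u$, $x_{2n}\not\to u$, $x_{2n+1}\rightharpoonup v$, $x_{2n+1}\not\to v$ with $u,v\neq\xd$, and by \eqref{null} no subsequence can have weak limit $\xd$.

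The main obstacle is to realize both required effects on the single sequence $\xn$ at once: the parity-dependent leakage into $N(A)$, which yields two distinct weak limits neither equal to $\xd$, and a persistent failure of norm convergence, which kills strong convergence along each subsequence. These cannot be produced by the nullspace tilt alone—if only the $e_0$-coefficient oscillated while $\Pi_{N(A)^\bot}\xn$ converged strongly to $\xd$, each subsequence would in fact converge strongly—so an independent ill-posed tail must be superimposed and coupled to the construction. The delicate calibration is that tilting $X_n$ toward $e_0$ inflates the oblique projection, so the tilt angles and the decay of $\xd$ have to be balanced to keep $\|\xn\|$ bounded for this fixed $\xd$ even though no uniform bound exists; the explicit decomposition of Proposition~\ref{propang} and the projection formulas of Corollary~\ref{cor10} are exactly the tools that make this balancing tractable.
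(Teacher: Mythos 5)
Your outline identifies the right conceptual ingredients: the space condition \eqref{spacecond} must fail, $\xn$ is an oblique projection that leaks a parity-dependent amount into $N(A)$ (giving two distinct weak limits $u,v\neq\xd$), and a \emph{separate} mechanism of escaping energy is needed to defeat strong convergence along each subsequence --- that last observation correctly matches the role of the $r_{ij}$ terms in the paper's $\xd$, which produce the non-vanishing discrepancy \eqref{nocon}. But for a theorem whose entire content is an explicit construction, the proposal stops where the proof has to begin: no operator is written down, the tilted spaces are not exhibited, the closed form for $c_n=(\xn,e_0)$ is not derived, and every load-bearing step (``one can arrange'', ``choosing the tilt angles so that $c_{2n}\to a$, $c_{2n+1}\to b$'', ``arranging $\liminf_n\|x_{2n}\|^2>\|u\|^2$'' while keeping $\sup_n\|\xn\|<\infty$) is asserted rather than verified. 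These are exactly the delicate points; they cannot be waved through.

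More seriously, the specific route you propose runs into an unaddressed obstruction. With $N(A)=\mathrm{span}\{e_0\}$ and each $X_n$ generated by plain basis vectors together with one tilted vector $t_n=f_n+\alpha_n e_0$, the nestedness $X_n\subset X_{n+1}$ demanded by \eqref{projdef} is a hard constraint: each plain generator of $X_n$ must be expressible using only the plain generators of $X_{n+1}$ (any representation involving $t_{n+1}$ with nonzero coefficient puts a multiple of $e_0$ into $X_{n+1}$, restores $N(A)\cap X_{n+1}=N(A)$, and hence \eqref{spacecond}, killing the counterexample), and writing $t_n$ in terms of $X_{n+1}$ forces $\alpha_n/\alpha_{n+1}$ and essentially determines $f_{n+1}$ from $f_n$. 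So ``the amount and sign of the tilt'' is not a freely choosable parity-dependent parameter, and if the non-$e_0$ part of $t_n$ ever lands in the span of the plain generators of $X_{n+1}$, again $e_0\in X_{n+1}$. You would need to resolve this before any of the subsequent computations make sense. The paper avoids the issue entirely by taking $N(A)$ \emph{infinite}-dimensional with every nonzero nullspace element having infinite support (see \eqref{nsp}); then the untilted coordinate blocks $X_n=\mathrm{span}\{e_{ij}:1\le i,j\le n\}$ automatically satisfy $N(A)\cap X_n=\{0\}$, and both the parity oscillation (via $\rho_j$) and the escaping energy (via $r_{ij}$, living in row $i=n$ and drifting to infinity) are encoded in the single element $\xd$ rather than in the discretization spaces. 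I would recommend either adopting that design or supplying a complete, verified construction of your nested tilted spaces together with the explicit solution of the normal equations.
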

\begin{proof}
Let $X$ be a separable Hilbert space with orthonormal basis $e_{ij}$, $(i,j)\in\N
\times\N$, i.e., all elements $x\in X$ may be represented via
\[ x=\sum_{i,j=1}^\infty\xi_{ij}e_{ij}\qquad\text{with}\qquad \|x\|^2=
   \sum_{i,j=1}^\infty\xi_{ij}^2<\infty\,. \]
We define a linear bounded operator $A:X\to X$ via
\begin{equation*}
 Ax:=\sum_{i=1}^\infty\sum_{j=2}^\infty(\xi_{ij}+q^j\xi_{i1})e_{ij}\,,
\end{equation*}
where $q\in(0,1)$ is fixed. Obviously,
\begin{equation*}
 \|Ax\|^2=\sum_{i=1}^\infty\sum_{j=2}^\infty(\xi_{ij}+q^j\xi_{i1})^2\le
 2\max\left\{1,\frac{q^4}{1-q^2}\right\}\|x\|^2\,.
\end{equation*}
It is easy to see that $A$ has an infinite-dimensional nullspace. It holds that
\begin{equation}\label{nsp} x\in N(A) \Longleftrightarrow  \forall  i\ge 1,j>1:\,\xi_{ij}=-q^j\xi_{i1}\, \text{ and } 
   \sum_{i=1}^\infty\xi_{i1}^2<\infty\,. 
\end{equation}   
A generalized solution $\xd=A^\dagger y$ of the equation $Ax=y$ is always an element
of $N(A)^\bot$;
we may characterize these elements as follows:
\begin{equation}\label{perp}
 z=\sum_{i,j=1}^\infty\eta_{ij}e_{ij}\in N(A)^\bot \Longleftrightarrow \forall i\ge 1:\,
 \begin{array}{l} \displaystyle \eta_{i1}=
 \sum_{j=2}^\infty q^j\eta_{ij}\,\text{ and }\\[3mm]
 \displaystyle \sum_{i=1}^\infty\sum_{j=2}^\infty
 \eta_{ij}^2<\infty\, \end{array}. 
\end{equation}
Now we choose finite-dimensional subspaces of $X$:
\[ X_n:=\text{span}\{e_{ij}:1\le i,j\le n\}. \]
Obviously, \eqref{projdef} holds. 
Let $\xd\in N(A)^\bot$ with $\xd =\sum_{i,j=1}^\infty\xdeta_{ij}e_{ij} \in N(A)^\bot$  
and $y:=A\xd,$ and set $\xn:=\And y$, where $\An:=AP_n.$ 

Since, due to \eqref{nsp}, $N(A)\cap X_n=\{0\}$, we get by \eqref{onen} 
that $N(\An)=X_n^\bot$. Therefore,
\begin{equation}\label{defxnn}  \xn=\sum_{i,j=1}^n\xi_{ij}^ne_{ij} \end{equation}
is the unique minimizer in $X_n$ of the problem
\[ \|A\xn-y\|^2=\sum_{i=1}^\infty\sum_{j=2}^\infty\left( (\xi_{ij}^n-\xdeta_{ij})+q^j
  (\xi_{i1}^n-\xdeta_{i1}) \right)^2\,\to\,\min\,, \]
where $\xi_{ij}^n:=0$ if $i>n$ or $j>n$.

From the first order necessary conditions for a minimum 
we obtain the solution:
\begin{equation*}
 \xi_{ij}^n=\xdeta_{ij}-q^j(\xi_{i1}^n-\xdeta_{i1})\,,\qquad\text{if}\;1\le i\le n\,,
 2\le j\le n\,,
\end{equation*}
and
\begin{align*}
 \xi_{i1}^n &= \xdeta_{i1}+\left(\sum_{j=n+1}^\infty q^{2j}\right)^{-1}
 \sum_{j=n+1}^\infty q^j\xdeta_{ij} \nonumber \\
 &= \xdeta_{i1}+(1-q^2)q^{-(n+1)}\sum_{j=0}^\infty q^j\xdeta_{i,n+1+j}\,,\qquad
 1\le i\le n\,. 
\end{align*}

Now we choose a concrete element $\xd \in N(A)^\bot$:
\[ \xdeta_{ij}:=\frac{q^j}{1-q^2}(c_i\rho_j+r_{ij})\,,\qquad i\ge 1,\,j>1\,, \]
with
\begin{equation}\label{condN}
 \rho_j:=\begin{cases} 1\,, & j\text{ even}\,, \\ 0\,, & j\text{ odd}\,,\end{cases}
 \qquad r_{ij}:=\begin{cases} 1\,, & j=i+1\,, \\ 0\,, & \text{ else}\,,
 \end{cases} \qquad\sum_{i=1}^\infty c_i^2<\infty\,,
\end{equation}
and the extension \eqref{perp} for $j = 1$.
The condition on the coefficients $c_i$ guarantees that
\[ \sum_{i=1}^\infty\sum_{j=2}^\infty{\xdeta_{ij}}^2<\infty\,. \]
It then holds that
\begin{equation}\label{xi3}
 \xi_{i1}^n=\xdeta_{i1}+c_ie_n+r_{i,n+1}\qquad\text{with}\qquad e_n:=
 \sum_{j=0}^\infty q^{2j}\rho_{n+1+j}\,.
\end{equation}
Noting that
\begin{equation}\label{en}
 e_n=\frac{1}{1-q^4}\cdot\begin{cases} q^2\,, & n\text{ even}\,, \\ 1\,, & n
 \text{ odd}\,,\end{cases} 
\end{equation}
\eqref{xi3} implies that 
\begin{align}
 \lim_{l\to\infty}\xi_{i1}^{2l} &= \xdeta_{i1}+\frac{c_iq^2}{1-q^4}, \label{xi4} \\
 \lim_{l\to\infty}\xi_{i1}^{2l+1} &= \xdeta_{i1}+\frac{c_i}{1-q^4}. \label{xi5}
\end{align}
Let us now define the two elements
\[ u:=\sum_{i,j=1}^\infty u_{ij}e_{ij}\qquad\text{and}\qquad
   v:=\sum_{i,j=1}^\infty u_{ij}e_{ij} \]
with
\begin{align*}
 u_{i1} &:= \xdeta_{i1}+\frac{c_iq^2}{1-q^4}&  u_{ij} &:= \xdeta_{ij}-
 q^j(u_{i1}-\xdeta_{i1})\,, \quad  j>1\,, \\
 v_{i1} &:= \xdeta_{i1}+\frac{c_i}{1-q^4} &  v_{ij} &:= \xdeta_{ij}-q^j(v_{i1}-
 \xdeta_{i1})\,, \quad   j>1\,.
\end{align*}
Obviously, due to \eqref{nsp} and \eqref{condN}, $u-\xd$ and $v-\xd \in N(A),$ and thus
$u$ and $v$ are least-squares
solutions of $Ax=y = A\xd$ with $u,v \not = \xd.$ 
Together with \eqref{condN}, \eqref{xi3}, and \eqref{en} we immediately obtain that
(remember that $x_n = \And y$ is given by \eqref{defxnn}, \eqref{xi3})
\begin{equation}\label{nocon} \|x_{2l}-P_{2l}u\|^2=\frac{q^4-q^{2n+2}}{1-q^2}=
   \|x_{2l+1}-P_{2l+1}v\|^2. \end{equation}
Now \eqref{xi4} and \eqref{xi5} imply that
\begin{align*}
 x_{2l} &\rightharpoonup u\, , \qquad   x_{2l+1} \rightharpoonup v\,,  
\end{align*}
but \eqref{nocon} implies $x_{2l} \not\to u$ and  $x_{2l+1} \not\to v.$
Thus, it is possible that $\xn$ has different weakly convergent subsequences,
but it neither converges weakly nor strongly towards $\xd$.
\end{proof}

\section{Conclusion}\label{sec:five}
We have studied global and local convergence of general projection
schemes for ill-posed problems. For global convergence, we have
established the uniform boundedness condition \eqref{ubc} as
being necessary and sufficient and have found concrete conditions in 
Theorems~\ref{lemmafive} and \ref{lemmasix} when this holds. 
Several practically useful sufficient condition were given in 
Section~\ref{necsuff}. Concerning local convergence, we have 
generalized the well-known results of Groetsch and Neubauer and Du 
giving an equivalent characterization of local convergence by norm bounds in Theorem~\ref{th25}. 
Further sufficient conditions of the type of Luecke and Hickey were given in 
Proposition~\ref{above90}. 

In the analysis, we point out two important findings: the recognition of 
the $\xmn$ as oblique projection of $\xd,$ which leads to a study of 
angles of a sequence of subspaces. The second point is the 
question if the intuitive identity ``$N(A)^\bot = \lim_{m,n} N(\Amn)^\bot$'' is valid, 
understood in the sense as \eqref{malmol}. As the inclusion ``$\subset$`` always 
holds, this  gives a way of applying the uniform boundedness principle. However it 
is important to notice that this identity does only hold unless the additional space condition 
\eqref{spacecond} holds. While for injective operators this is trivially true, 
for noninjective operators \eqref{spacecond} has to be taken into account when studying
local (weak) convergence. 

The issue that this condition is needed for weak convergence is illustrated by 
a nontrivial counterexample in Theorem~\ref{neucount} of a bounded sequence $\xn$
which does not converge at all, thus generalizing the examples of Seidman and Du.

\section*{Acknowledgments}
The author would like to thank Andreas Neubauer for useful discussions and for 
providing the counterexample in Theorem~\ref{neucount}.

\bibliographystyle{siam}
\bibliography{regpromain}

\end{document}